\newtheorem{theorem}{Theorem}[section]
\newtheorem{proposition}{Proposition}[section]
\newtheorem{definition}{Definition}[section]
\newtheorem{corollary}{Corollary}[section]
\theoremstyle{remark} 
\newtheorem{example}{Example}[section]
\theoremstyle{remark}
\newtheorem{remark}{Remark}[section]
\numberwithin{equation}{section}
\date{\today}
\DeclareMathOperator{\supp}{supp}
\DeclareMathOperator{\Ker}{Ker}
\DeclareMathOperator{\WF}{WF}
\newcommand{\eps}{\varepsilon}
\newcommand{\R}{{\bf R}}
\newcommand{\Id}{\mbox{Id}}
\newcommand{\f}{g}
\renewcommand{\r}[1]{(\ref{#1})}
\newcommand{\PDO}{$\Psi$DO}
\newcommand{\be}[1]{\begin{equation}\label{#1}}
\newcommand{\ee}{\end{equation}}
\renewcommand{\d}{\mathrm{d}}
\renewcommand{\i}{\mathrm{i}}
\title[The identification problem]{The identification problem for the attenuated X-ray transform}
\author[Plamen Stefanov]{Plamen Stefanov}
\address{Department of Mathematics, Purdue University, West Lafayette, IN 47907}
\thanks{Partly supported by a NSF  Grant DMS-0800428}
\begin{document}

\begin{abstract} 
We study the problem of recovery both the attenuation $a$ and the source $f$ in the attenuated X-ray transform in the plane. We study the linearization as well. It turns out that there are natural Hamiltonian flow that determines which singularities we can recover. If the perturbations $\delta a$, $\delta f$ are supported in a compact set that is non-trapping for that flow, then the problem is well posed. Otherwise, it may not be, and least  in the case of radial $a$, $f$, it is not. We present uniqueness and non-uniqueness results for both the linearized and the non-linear problem; as well as a H\"older stability estimate. 
\end{abstract}

\maketitle

\section{Introduction}
We study the attenuated X-ray transform 
\be{1}
X_af(x,\theta) = \int e^{-Ba(x+t\theta,\theta)} f(x+t\theta)\, \d t , \quad x\in \R^2, \;\theta\in S^1,
\ee
in the plane with a source $f$ and an attenuation $a$ that we want to recover. We denote by  
\be{2}
Ba(x,\theta) =\int_0^{\infty} a(x+t\theta)\,\d t
\ee
the ``beam transform'' of $a$, usually denoted by $Da$. 
We will assume that both $a$ and $f$ are compactly supported. In applications, a constant attenuation $a$ is also considered but when observations are made on the boundary of a compact domain, one can replace that constant by a constant multiple of the characteristic function of that domain. 

The problem that we study is: can we recover both $a$ and $f$ from knowledge of $X_af$? Sometimes this is called the \textit{Identification Problem} (for SPECT). 

This problem arises in Single Photon Emission Computerized Tomography (SPECT). Radioactive markers are injected into a patient's body and the emitted X-rays, attenuated by the body, are detected outside of it. The problem is to recover the source with a unknown attenuated coefficient. 

When $a$ is known, it is well known that $f$ can be reconstructed uniquely, even by means of explicit formulas \cite{Buk-Kaz, ArbuzivBK},  \cite{Novikov, Novikov_range}, \cite{Natterer_2001}. For this reason, some of the numerical attempts to do a reconstruction are focused on recovery, or getting a good approximation of $a$ first, instead of treating $(a,f)$ as a pair. Sometimes this is called \textit{attenuation correction}, see e.g., \cite{Welch1997,  Ramlau1998}.  In clinical applications, additional X-rays are taken to reconstruct $a$ first. Eliminating or reducing those additional X-rays remains an important problem. 

There has not been much progress in the mathematical understanding of the identification problem so far. A related but not identical problem for finding both a constant attenuation and the source in the exponential X-ray transform has been solved in  \cite{Solmon}, see also \cite{Hertle}. The main result in  \cite{Solmon} is, roughly speaking, that specific pairs of constant $a$ and radial $f$ cannot be distinguished but all other pairs can. The identification problem with $f$ a finite sum of  delta sources has been studied in   \cite{Natterer_81, Natterer_83}, see also \cite{Boman_89}, but the results there do not and cannot imply uniqueness. Natterer also viewed the problem as a range characterization problem: if the ranges of $X_{a_1}$ and $X_{a_2}$ happen to be the same, for example, then there cannot be uniqueness. Range conditions, see e.g., \cite{Novikov_range}, have been viewed as a possible tool for solving the problem, both numerically, see e.g., \cite{Bronnikov2000} and analytically, as in the recent work \cite{bal_SPECT}. Numerical reconstructions have been tried, too, with variable success, in \cite{Censor1979, Manglos1993, Welch1997,  Ramlau1998,Bronnikov, Bronnikov2000, HabibZaidi02012003}, for example. Some of them use clinical data. 
A.~L.~Bukhgeim \cite{Bukhgeim_Id} recently outlined a recovery algorithm   if $a$ is a priori known  to be a constant multiple of the characteristic function of a star-shaped domain.

Our approach is based on the following. 
 The attenuated X-ray transform, and its linearization, carry  information about  $f$ and $a$ along each line twice because we integrate both in $\theta$ and $-\theta$ directions. From a microlocal point of view, those two lines determine the wave front sets at covectors normal to them. So we have two equations for two unknowns. 
We study first a linear problem that appears as a linearization of $X_af$ near some fixed $(a_0,f_0)$. Also, the non-linear map  $X_{a_2}f_2-X_{a_1}f_1$  is of that form, see \r{nl.3}. This problem can be formulated as the inversion of $\mathcal{I}{\f} := I_{w_1}{\f}_1+I_{w_2}{\f}_2$, ${\f}=({\f}_1,{\f}_2)$, where $I_w$ is the weighted X-ray transform with a weight $w(x,\theta)$, see \r{2.1}. The weights $w_{1,2}$ are of specific type in the case of the Identification Problem but we study general weights first. The operator $\mathcal{I}$ is a Fourier Integral Operator but we do not study it directly. Instead, to analyze the equation $\mathcal{I}g=h$, we apply an explicit operator $Q$ to convert the equations 
\[
\left(I_{w_1}g_1 + I_{w_2}g_2\right)(z,\pm\theta) =h(z,\pm\theta) 
\]
to  equivalent  pseudo-differential ones of the type 
\be{0.1}
\left(w_1(x,\pm D^\perp/|D|)+\textnormal{l.o.t.} \right)g_1+ \left( w_1(x,\pm D^\perp/|D|) +\textnormal{l.o.t.} \right)g_1 =Qh,
\ee
 see Proposition~\ref{pr_eq}. 
Here, ``$\textnormal{l.o.t.}$'' stands for ``lower order terms'', and $w_j(x,\pm D^\perp/|D|)$ are  pseudo-differential operators (\PDO s) with  symbols $w_j(x,\pm \xi^\perp/|\xi|)$. We view this as a $2\times 2$ system of \PDO\ equations. The determinant of the principal symbol of the  is given by $p_0(x,\xi) = W(x,\xi^\perp/|\xi|)$, where 
\[
W(x,\theta) =  w_1(x,\theta)w_2(x,-\theta) -  w_1(x,-\theta)w_2(x,\theta)  .
\]
Since $W$ is an odd function of $\xi$, $p_0$  is not elliptic over any $x$, and has a non-trivial characteristic variety regardless of what $w_{1,2}$ are, in the cotangent bundle of any domain. Then $p_0$ is a Hamiltonian of fundamental importance for this system.  The singularities of ${\f}$ that may not be recoverable lie on zero bicharacteristics of that Hamiltonian; moreover each zero bicharacteristic either consists of singularities only, or there is none on it. This brings us to the following condition, well known in the theory of \PDO s of real principal type: if ${\f}$ has a support in a compact set $K$ that is non-trapping for the Hamiltonian flow, then the singularities of ${\f}$ can be recovered, with a loss of one derivative. Otherwise they may not be but the non-trapping condition is known not to be ``if and only if". In sections~\ref{sec_lin2} and \ref{sec_non_lin}, based on the microlocal understanding on the problem explained above, we  prove actual injectivity and stability of $\mathcal{I}$ for $f$ supported in non-trapping $K$ for generic $(w_1,w_2)$, including ones satisfying some analyticity assumptions; or for small $K$. We then apply this analysis to the non-linear Identification Problem to get local uniqueness and H\"older stability in a neighborhood of generic $(a,f)$ under the a priori assumption that the perturbations are supported in a non-trapping set.

The microlocal consequences of \r{0.1} are analyzed in more detail in Section~\ref{sec_micro}. In particular, we   describe the ``null eigenspace'' at the characteristic  points. In non-degenerate cases, \r{0.1} is of rank one on the characteristic variety $p_0=0$.

We also study the case of radial $a$ and $f$ in Section~\ref{sec_radial}. A thorough study of the radial case is behind the scope of this work however. The reason we include it is to present an example where the Hamiltonian flow can be explicitly computed.  
 The projections of the zero bicharacteristics happen to be the circles $|x|=R$, $R\ge0$. Then $K\subset \R^2$ is non-trapping if and only if it does not contain an entire circles centered at $0$, including the origin. In case $K$ is trapping, and contains a ball $|x|<R$, then the uniqueness fails and there is an infinite dimensional family of pairs $(a,f)$ with the same data. They consist of radial $a$ and $f$. This fact agrees with  the microlocal analysis that we present because the latter implies we may not be able to recover radial singularities. In this case actually, the non-trapping condition is also necessary for the problem to be well posed.

\section{Preliminaries} \label{sec_prel}
The attenuated X-ray transform  results from the following transport equation model. Let $f(x)$ be a compactly supported source of particles (or a signal propagating along lines with unit speed) propagating in a medium with attenuation coefficient $a(x)$. Then at the point $x\in \R^n$ and direction $\theta\in S^{n-1}$ (the dimension $n$ can be arbitrary), the total number of $u(x,\theta)$ of particles originating from the source solves the transport equation
\be{pr.1}
(\theta\cdot\partial_x+a)u=f, \quad u|_{\theta\cdot x\ll0}=0.
\ee
This is a linear ODE along the lines $t\mapsto (x+t\theta,\theta)$ and its solution is given by
\be{pr.2}
u(x,\theta) = \int_{-\infty}^0 e^{-\int_t^0 a(x+s\theta)\,\d s} f(x+t\theta)\,\d t.
\ee
This formula can be interpreted as the superposition of all attenuated signals at $(x,\theta)$ coming from the source.  
Then at points $x$ so that $\theta\cdot x\gg0$, one has $u=X_af$. 

It is useful to extend the definition of $B$, see \r{2}, to functions  $f$ depending on both $x$ and $\theta$:
\be{2a}
Bf(x,\theta) = \int_0^\infty f(x+t\theta,\theta)\,\d t.
\ee
For such $f$, the solution to \r{pr.1} is given by \r{pr.2} again, with $f(x+t\theta)$  replaced by $f(x+t\theta,\theta)$.

We introduce also the notation
\be{2.1}
I_wf(x,\theta) = \int w(x+t\theta,\theta) f(x+t\theta)\, \d t , 
\ee
for the weighted X-ray transform with weight $w(x,\theta)$. Then $I_w=X_a$ for $w=e^{-Ba}$ but we will allow more general weights in $I_w$. Also, $I_1=X_0$. 

We will also denote 
\[
v^\perp := (-v_2,v_1), \quad v=(v_1,v_2)\in \R^2.
\]

\subsection{A Radon transform type of parameterization of $X_a$ and $I_w$}
Since for a fixed direction $\theta$, $x$ and $x+s\theta$ parametrize the same (directed) line, we will think of $X_af$ and $I_wf$ as parameterized by $(z,\theta)$, $z\in \theta^\perp$. We denote by $Z$ the variety 
\[
Z = \{(z,\theta);\; \theta\in S^1, \, z\in \theta^\perp\},
\]
which is essentially the tangent bundle of $S^1$. Then we can set $z=p\theta^\perp$, and write $X_af$ as 
\be{2.1R}
X_af(p\theta^\perp,\theta) =   \int e^{-Ba(p\theta^\perp+t\theta, \theta)} f(p\theta^\perp+t\theta)\, \d t,\quad (p,\theta)\in \R\times S^1,
\ee
and similarly for $I_wf$. We think of $(p,\theta)\in \R\times S^1$ as a parameterization of $Z$.  We also define a measure on $Z$ by $\d z := \d p\,\d\theta$, where $\d\theta$ is the natural measure on $S^1$ given by $\d\vartheta$, with $\vartheta$ being the polar angle of $\theta$. 

\subsection{Functional spaces} We will assume throughout the paper that $\supp f$ is contained in a fixed compact set; and we can always assume that this compact set is  included in $(-\pi,\pi)^2$. We can therefore assume that $f$ is defined on the torus $\mathbf{T}^2$. For any compact set $K\subset \mathbf{T}^2$, we define $H^s(K)$ to be the closed subspace of $H^s(\mathbf{T}^2)$ of functions supported in $K$. In other words, the Sobolev norm in $K$ is defined through Fourier series. 
We define the Sobolev spaces $H^s(Z)$ in a similar way.  Since $|p|<\pi$ in \r{2.1R}, we can assume that $p$ belongs to the unit circle represented by $[-\pi,\pi]$ with both ends identified. Then $(p,\theta)\in S_p^1\times S_\theta^1$. The space $H^s(Z)$ is then defined by the norm
\be{A.4}
\|g\|_{H^s(Z)}= \big\| (1-\partial_p^2)^{s/2} g \big\|_{L^2(Z)},
\ee
where $\partial_p^2$ is the second  derivative w.r.t.\ $p$ on the compact manifold $S^1$. Notice that there are no $\theta$ derivatives in this definition, see also \cite[Theorem~II.5.2]{Natterer-book} for involving the $\theta$ derivatives when $a=0$. In other words, $H^s(Z)$ is defined through Fourier Series in the $p$ variable.

\section{Linearization}\label{sec_lin}
We are going to compute the linearization of the identification problem starting from formula \r{1}. Another way to do this, based on the transport equation,  is presented in section~\ref{sec_non_lin}. 

Assume that $a$ and $f$ are smooth enough so that the calculations below make sense. Denote by $G=\theta\cdot\partial_x$ be the generator of the geodesic flow on $T\R^2$ w.r.t.\ the Euclidean metric. 
Since  $a$ has compact support, then $GBa=-a$, and $Ba=0$ for $x\cdot\theta\gg0$;  and $Ba=I_1a$ for $x\cdot\theta\ll0$. Here, $I_1=I_a$ for $a=1$.

Since the problem is linear w.r.t.\ $f$, we linearize near some $a$ first, with $f$ fixed. 
Let $a_s = a+s{\delta a}$. Then 
\[
\frac{\d}{\d s}\Big|_{s=0}X_{a_s}f= -\int e^{-Ba(x+t\theta,\theta)}   f(x+t\theta)  B{\delta a} (x+t\theta,\theta)      \, \d t.
\]
Write 
\[
e^{-Ba}f= -GBe^{-Ba}f
\]
and plug this into the formula above. Integrate by parts to get
\[
\frac{\d}{\d s}\Big|_{s=0}X_{a_s}f= \int\left[ \left(B e^{-Ba}    f    \right) {\delta a}\right] (x+t\theta,\theta)  \, \d t - 
  X_af . X_0{\delta a}.
\]
The linearization of $X_af$ w.r.t.\ $a$ is therefore a weighted X-ray transform of the perturbation ${\delta a}$ of the form
\[
\int w(x+t\theta,\theta){\delta a}(x+t\theta) \, \d t 
\]
with a weight function 
\be{w}
w = B e^{-Ba}f - X_af.
\ee
The second term on the right is constant along each line. The weight can also be expressed as
\be{w2}
w(x,\theta) = -\int_{-\infty}^0 e^{-Ba(x+t\theta,\theta)}f(x+t\theta)\, \d t.
\ee
A direct calculation yields
\be{w2'}
w  = -e^{-Ba}u,
\ee
where $u$ is the solution \r{pr.2} of \r{pr.1}. 

Let $\delta X_{a,f}(\delta a,\delta f)$ denote the linearization of $X_af$ near fixed $a$, $f$. We just proved the following.

\begin{proposition}\label{pr1}
\[
\delta X_{a,f}(\delta a,\delta f) =  I_w\delta a + X_a\delta f ,
\]
where $w$ is as in \r{w2} or \r{w2'}. 
\end{proposition}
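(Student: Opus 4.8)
The statement is simply the identity
\[
\delta X_{a,f}(\delta a,\delta f) = I_w\,\delta a + X_a\,\delta f,
\]
with $w$ given by \r{w2}, so the proof splits into two independent linearizations which are then added by linearity.

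First, for the $\delta f$ part: since $X_af$ is linear in $f$ with $a$ held fixed, one has $\frac{\d}{\d s}\big|_{s=0}X_a(f+s\,\delta f) = X_a\,\delta f$ immediately from \r{1}. Nothing else is needed here.

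Second, for the $\delta a$ part: this is exactly the computation already carried out in the paragraphs preceding the statement. I would reproduce it as the body of the proof — differentiate $X_{a_s}f$ at $s=0$ using $\frac{\d}{\d s}e^{-Ba_s(x+t\theta,\theta)} = -e^{-Ba(x+t\theta,\theta)}B\,\delta a(x+t\theta,\theta)$, which follows by differentiating \r{2a} under the integral sign; then write $e^{-Ba}f = -GB(e^{-Ba}f)$ using $GB = -\mathrm{Id}$ on compactly supported functions (valid because $\delta a$ and $f$ have compact support, so all integrals converge and the boundary terms at $t=\pm\infty$ vanish). Substituting and integrating by parts along the line $t\mapsto x+t\theta$ moves $G$ off $B(e^{-Ba}f)$ and onto $B\,\delta a$; since $GB\,\delta a = -\delta a$, one obtains
\[
\frac{\d}{\d s}\Big|_{s=0}X_{a_s}f = \int \big[(Be^{-Ba}f)\,\delta a\big](x+t\theta,\theta)\,\d t - (X_af)(X_0\,\delta a),
\]
i.e.\ $I_w\,\delta a$ with $w = Be^{-Ba}f - X_af$ as in \r{w}. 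The two equivalent forms \r{w2} and \r{w2'} then follow: for \r{w2}, split $Be^{-Ba}f$ at $t=0$ as $\int_0^\infty = \int_{-\infty}^\infty - \int_{-\infty}^0$ and recognize $\int_{-\infty}^\infty e^{-Ba}f\,\d t = X_af$ (using $Ba(x+t\theta,\theta) = Ba(x,\theta)$ is not quite right — rather one uses the cocycle/shift property of $B$ along the line so that $e^{-Ba(x+t\theta,\theta)}$ reconstitutes the attenuation factor $e^{-\int_t^{\cdot}}$ appropriately) to cancel the $X_af$ term and leave $-\int_{-\infty}^0 e^{-Ba(x+t\theta,\theta)}f(x+t\theta)\,\d t$; for \r{w2'}, compare this last integral with \r{pr.2} and factor out $e^{-Ba(x,\theta)}$ using again the additivity $Ba(x,\theta) = Ba(x+t\theta,\theta) + \int_t^0 a(x+s\theta)\,\d s$ for $t<0$, which turns the integrand into $e^{-Ba(x,\theta)}e^{\int_t^0 a(x+s\theta)\,\d s}f(x+t\theta)$, i.e.\ $e^{-Ba(x,\theta)}$ times the integrand of \r{pr.2}; hence $w = -e^{-Ba}u$.

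**Main obstacle.** The only delicate point is bookkeeping the attenuation factors: one must keep careful track of which base point the beam transform $Ba$ is evaluated at and use the shift identity $Ba(x,\theta) = Ba(x+t\theta,\theta) + \int_0^t a(x+s\theta)\,\d s$ (equivalently $Ba(x+t\theta,\theta) - Ba(x,\theta) = -\int_0^t a(x+s\theta)\,\d s$) consistently when passing between the three forms \r{w}, \r{w2}, \r{w2'}. The integration by parts itself is routine once one notes that $G$ acts as $\frac{\d}{\d t}$ along each line and the compact support of $f$ kills the boundary contributions; regularity is not an issue because the proposition already assumes $a$ and $f$ are smooth enough.
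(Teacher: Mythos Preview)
Your proposal is correct and follows essentially the same route as the paper: the paper's proof \emph{is} the computation in the paragraphs preceding the proposition (the text ends with ``We just proved the following''), namely the linearity in $f$, the differentiation of $X_{a_s}f$ at $s=0$, the rewriting $e^{-Ba}f=-GBe^{-Ba}f$, and the integration by parts along the line. One small simplification: your derivation of \r{w2} needs no cocycle property at all --- since $B$ is already $\int_0^\infty$ and $X_af=\int_{-\infty}^\infty e^{-Ba(x+t\theta,\theta)}f(x+t\theta)\,\d t$ by definition \r{1}, the identity $w=Be^{-Ba}f-X_af=-\int_{-\infty}^0 e^{-Ba(x+t\theta,\theta)}f(x+t\theta)\,\d t$ is immediate; the shift identity is only needed for \r{w2'}, where your argument is fine.
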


\section{A more general linear problem: invertibility of a sum of two weighted X-ray transforms}  \label{sec_lin2}
\subsection{Formulation and preliminaries}
Consider a more general problem. Let $\mathcal{I}({\f}_1,{\f}_2) = I_{w_1}{\f}_1+I_{w_2}{\f}_2$, where $w_{1,2}$ are two weight functions, i.e.,
\be{1.5}
\mathcal{I}({\f}_1,{\f}_2)(x,\theta) = \int w_1(x+t\theta,\theta){\f}_1(x+t\theta) \, \d t + \int w_2(x+t\theta,\theta){\f}_2(x+t\theta) \, \d t.
\ee
 We will compute $\mathcal{I}^*\mathcal{I}$ w.r.t.\ the inner product in $L^2(Z)$. Clearly,
\be{1.6}
\mathcal{I}^*\mathcal{I} = \begin{pmatrix}
I_{w_1}^* I_{w_1}&   I_{w_1}^* I_{w_2}   \\
          I_{w_2}^* I_{w_1}& I_{w_2}^* I_{w_2}
\end{pmatrix}
\ee
By Proposition~\ref{pr_symbol}, $\mathcal{I}^*\mathcal{I}$ is a \PDO\ of order $-1$ with  principal symbol
\be{1.7}
{\delta a}_p(\mathcal{I}^*\mathcal{I}) =  \frac{\pi}{|\xi|}\begin{pmatrix}
|w_{1,+}|^2 + |w_{1,-}|^2&   
\bar w_{1,+} w_{2,+} + \bar w_{1,-}w_{2,-}   \\
       w_{1,+}  \bar w_{2,+} + w_{1,-}\bar w_{2,-}  & 
|w_{2,+}|^2+ |w_{2,-}|^2
\end{pmatrix}
\ee
where
\[
w_{j,\pm} = w_j(x,\pm\xi^\perp/|\xi|), \quad j=1,2.
\]
A direct calculation yields
\be{det}
\det{\delta a}_p(\mathcal{I}^*\mathcal{I}) = |w_{1,+} w_{2,-} -w_{2,+}  w_{1,-}|^2 = \bigg|\det  \begin{pmatrix}
w_{1,+}& w_{2,+}   \\
     w_{1,-}     &   w_{2,-}
\end{pmatrix}\bigg|^2.
\ee
That determinant not being zero is a microlocal ellipticity condition. As we see below, it vanishes over any point $x$; therefore, $\mathcal{I}^*\mathcal{I}$ cannot be elliptic over (i.e., in the cotangent bundle of) any domain. 

Set
\be{W1}
W(x,\theta) =  w_1(x,\theta)w_2(x,-\theta) -  w_1(x,-\theta)w_2(x,\theta).
\ee
Then $\det{\delta a}_p(\mathcal{I}^*\mathcal{I})= |W(x,\xi^\perp)|^2$. The function $W$ is odd in $\theta$, and therefore, for any $x$ it has zeros for some vectors $\theta$. The inconvenience of working with \r{det} however is that it has double characteristics.

Instead of studying the invertibility of $\mathcal{I}^*\mathcal{I}$, we will approach the problem in a more direct way, slightly different (but equivalent) than what we do in Section~\ref{sec_micro}, see also \r{0.1}.  
Set 
\be{J}
Jh(x,\xi)= h(x,-\xi). 
\ee
Let $\alpha(x,\theta)$ be any smooth  function, odd on $S^1$ w.r.t.\ $\theta$. 
Apply $I'_{\alpha Jw_2}$ to the equation
\be{1.7eq}
I_{w_1}{\f}_1+I_{w_2}{\f}_2=h
\ee
to get
\be{1.8}
I'_{\alpha Jw_2}I_{w_1}{\f}_1+ I'_{\alpha Jw_2}I_{w_2}{\f}_2= I'_{\alpha Jw_2}h.
\ee
By Proposition~\ref{pr_symbol}, both operators on the left are \PDO s of order $-1$. The principal symbol of $I'_{\alpha Jw_2}I_{w_1}$ is given by $2\pi/|\xi|$ times the even part of $(\alpha w_1Jw_2)(x,\xi^\perp/|\xi|)$, i.e., by $2\pi\alpha(x,\xi^\perp/|\xi|)/|\xi|$ times the odd  part of $(w_1Jw_2)(x,\xi^\perp/|\xi|)$. Thus  
\be{det2}
\sigma_p( I'_{\alpha Jw_2}I_{w_1} ) = \frac{\pi}{|\xi|}\alpha W\big|_{\theta=\xi^\perp/|\xi|}.
\ee
Notice that $W$ is the determinant in the r.h.s.\ of \r{det} but not squared. It has the same zeros as \r{det} but they are simple. In the same way, we get that the principal symbol of $I'_{\alpha Jw_2}I_{w_2}$ is as above but with $w_2$ replaced by $w_1$, i.e., it is zero; and therefore, $I'_{\alpha Jw_2}I_{w_2}$  is of order $-2$.

Choose $\alpha=\theta_1$ first.  Then $|\xi|\alpha(\xi^\perp/|\xi|) =-\xi_2$, which is the symbol of $-D_2= \i\partial_2 $, and we get
\be{1.8a}
\sigma_p( I'_{\theta_1Jw_2}I_{w_1} ) = \frac{\pi}{|\xi|^2}(-\xi_2)W(x,\theta)\big|_{\theta=\xi^\perp/|\xi|}.
\ee
Modulo lower order terms, \r{1.8} becomes
\[
\frac{\pi}{|D|^2} (-D_2) W\left(x,D^\perp/|D|\right) {\f}_1 \cong I'_{\theta_1 Jw_2}h,
\]
where the meaning of $1/|D|$ is given by the \PDO\ calculus. Similarly, taking $\alpha=\theta_2$, we get
\[
\frac{\pi}{|D|^2}D_1 W\left(x,D^\perp/|D|\right) {\f}_1 \cong I'_{\theta_2 Jw_2}h.
\]
Apply $-D_2$ to the first identity,  $D_1$ to the second, and add them together to get
\[
\pi W\!\left(x,D^\perp/|D|\right) {\f}_1 \cong  \left(-D_2 I'_{\theta_1 Jw_2}+ D_1 I'_{\theta_2 Jw_2}\right) h.
\]
Notice that the lower order terms on the left involve ${\f}_2$ as well. 
In a similar way we get
\[
\pi W\!\left(x,D^\perp/|D|\right) {\f}_2 \cong   \left(D_2 I'_{\theta_1 Jw_1}- D_1 I'_{\theta_2 Jw_1}\right) h.
\]

We therefore proved the following. 
\begin{proposition}\label{pr_1.1}
For all compactly supported ${\f}_1$, ${\f}_2$   
we have
\be{1.12}
P{\f} = (\pi\i)^{-1}(\partial_1 I'_{\theta_2 Jw_2}-\partial_2 I'_{\theta_1 Jw_2},  -\partial_1 I'_{\theta_2 Jw_1}     +    \partial_2 I'_{\theta_1 Jw_1}  )\mathcal{I}{\f},
\ee
where ${\f}=({\f}_1,{\f}_2)$,  
and $P$ is a matrix valued classical \PDO\ of order $0$ with a scalar principal symbol given by 
\be{1.13}
p_0(x,\xi) :=  W\!\left(x,\xi^\perp/|\xi|\right).
\ee
\end{proposition}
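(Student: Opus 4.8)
The proof amounts to organizing the computation above; its one non-elementary ingredient is the symbol calculus of Proposition~\ref{pr_symbol}, which says that for weights of the appropriate parity a composition $I'_uI_v$ is a classical \PDO\ of order $-1$ on $\R^2$ whose principal symbol at $(x,\xi)$ is $2\pi/|\xi|$ times the even-in-$\theta$ part of $(uv)(x,\theta)$ evaluated at $\theta=\xi^\perp/|\xi|$, and that composing with a differential operator shifts the order in the obvious way.

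First I would apply to the equation $I_{w_1}{\f}_1+I_{w_2}{\f}_2=h$ the ``adjoint'' operators $I'_{\alpha Jw_2}$ for the two odd choices $\alpha=\theta_1$ and $\alpha=\theta_2$. Pairing the flip $J$ with the \emph{second} weight $w_2$ triggers a parity cancellation: the ${\f}_2$-contribution is controlled by $w_2(x,\theta)w_2(x,-\theta)$, which is even in $\theta$, so its odd part — hence the principal symbol of $I'_{\alpha Jw_2}I_{w_2}$ — vanishes and that term is only of order $-2$; the ${\f}_1$-contribution instead carries the antisymmetric combination and produces the determinant-type function $W$ of \r{W1}, with principal symbol $\frac{\pi}{|\xi|}\alpha W|_{\theta=\xi^\perp/|\xi|}$ as in \r{det2}.

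The next key step is the particular choice $\alpha\in\{\theta_1,\theta_2\}$, which converts the otherwise non-local factor $1/|\xi|$ into genuine differential operators: on $\theta=\xi^\perp/|\xi|$ one has $|\xi|\theta_1=-\xi_2$ and $|\xi|\theta_2=\xi_1$, giving the two identities displayed after \r{1.8a}. I would then apply $-D_2$ to the $\theta_1$-identity and $D_1$ to the $\theta_2$-identity and add them: the prefactors $1/|D|^2$ collapse because $(-D_2)^2+D_1^2=|D|^2$, and one is left with $\pi W(x,D^\perp/|D|){\f}_1$ modulo terms of order $-1$ — terms that do involve ${\f}_2$ as well, but which are exactly the off-diagonal entries of the operator $P$ being built. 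Running the same computation with $w_1$ and $w_2$ interchanged (now it is the $w_1$-contribution that drops) gives the analogous identity for ${\f}_2$, again with principal symbol $W$ because $w_2(x,\theta)w_1(x,-\theta)$ has odd part $-\frac{1}{2}W$, the sign getting absorbed.

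Finally I would read \r{1.12}, with $\mathcal{I}{\f}$ in place of $h$, as the \emph{definition} of $P$: it is the stated composition of the first-order operators $\partial_j$ with the order $-1$ operators $I'_{\theta_kJw_l}I_{w_m}$, hence automatically a matrix-valued classical \PDO\ of order $0$. The symbol bookkeeping above then shows its diagonal entries have principal symbol $W(x,\xi^\perp/|\xi|)$, while the cross terms, built from the order $-2$ compositions $I'_{\alpha Jw_j}I_{w_j}$ followed by a $\partial_j$, are of order $-1$; so the $2\times2$ principal symbol is the scalar matrix $W(x,\xi^\perp/|\xi|)\,\Id$, which is $p_0$ as in \r{1.13}. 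The only fussy point is translating between the $D_j=-\i\partial_j$ bookkeeping of the computation and the $\partial_j$, $(\pi\i)^{-1}$ normalization of the statement; the substantive content — what makes the scheme work at all — is the parity observation that forces the second weight's contribution, and after symmetrization each off-diagonal entry, to drop one full order, which is precisely what decouples the two equations at the principal level and leaves the clean Hamiltonian $W(x,\xi^\perp/|\xi|)$.
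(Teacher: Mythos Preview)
Your proposal is correct and follows essentially the same route as the paper: the argument preceding the proposition is exactly the computation you outline, applying $I'_{\alpha Jw_2}$ (and then $I'_{\alpha Jw_1}$) with $\alpha=\theta_1,\theta_2$, using Proposition~\ref{pr_symbol} and the parity of $w_jJw_j$ to drop the cross term one order, and then combining via $-D_2,\,D_1$ to clear the $1/|D|^2$. Your reading of \r{1.12} as the definition of $P$, with the off-diagonal entries landing in order $-1$, is precisely how the paper packages the two displayed identities into the proposition.
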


We notice that \r{1.12} can also be written in the form
\[
\begin{split}
&\pi\i P{\f}\\ &= \left(\! -\int_{S^1}(\theta^\perp\!\cdot\partial_x) w_2(x,-\theta)\mathcal{I}{\f}(x-(\theta\cdot x)\theta,\theta)\,\d\theta, \int_{S^1}(\theta^\perp\!\cdot\partial_x) w_1(x,-\theta)\mathcal{I}{\f}(x-(\theta\cdot x)\theta,\theta)\,\d\theta
\right)\!.
\end{split}
\]

Let
\be{1.14}
\Sigma = \left\{(x,\xi)\in T^*\R^n\setminus 0;\; p_0(x,\xi)=0\right\}= \{W=0\}^\perp
\ee
be the characteristic variety of $p_0$, where the sign $\perp$ applies to the second variable $\theta$ only. 

There are several definitions of real principal type \PDO s in the literature, including or not the differential condition below, or the non-trapping one, in a fixed domain. We will use the following one.  We say that the \PDO\ $P\in \Psi^m$ is  of real principal type, if its principal symbol $p_m$ is real, scalar, homogeneous in $\xi$, and $\d p_m$ is not collinear to $\xi\d x$ on $\{p_m=0\}$ for $\xi\not=0$. The latter condition says that if we identify covectors of different length by their direction, then  the Hamiltonian vector field never vanishes, and in particular, the flow does not have stationary points. Such operators are microlocally equivalent to $\partial_{x_1}$ modulo lower order terms. We also note that this condition makes $\{W=0\}$ a codimension one (dimension $2$) smooth submanifold. The same applies to $\Sigma$, considered as part of  the unit cotangent bundle.

Below, $ \partial_{\theta^\perp}$ is the angular derivative in the $\theta$ variable, i.e., the derivative $\partial/\partial\vartheta$  w.r.t.\ to the polar angle $\vartheta$.

\begin{proposition}\label{pr_rpt}
The \PDO\ $P$ is of real principal type in some domain $\Omega\subset \R^2$, if an only if
\be{1.15}
\text{$W$, $\theta\cdot\partial_x W$, $\partial_{\theta^\perp} W$  
cannot be all zero at the same time, for any point in $\Omega\times S^1$}. 
\ee
\end{proposition}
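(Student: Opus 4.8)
The plan is to unwind the definition of ``real principal type'' from the body of the text into an explicit condition on $W$. Recall that $P$ is real principal type in $\Omega$ means: the principal symbol $p_0(x,\xi) = W(x,\xi^\perp/|\xi|)$ is real (clear, assuming $w_{1,2}$ real), scalar (clear, from Proposition~\ref{pr_1.1}), homogeneous in $\xi$ (clear, degree $0$), and $\d p_0$ is not collinear to $\xi\,\d x$ on $\{p_0=0\}$ with $\xi\neq 0$. So the whole proof is the computation of $\d p_0$ and the translation of the non-collinearity condition into \r{1.15}. The non-collinearity of $\d p_0$ with $\xi\,\d x$ at a characteristic point is equivalent to: $\partial_\xi p_0 \neq 0$, OR ($\partial_\xi p_0 = 0$ and $\partial_x p_0$ is not a multiple of $\xi$). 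But here I should be careful: since $p_0$ is homogeneous of degree $0$ in $\xi$, Euler's identity gives $\xi\cdot\partial_\xi p_0 = 0$ identically, so $\partial_\xi p_0$ is automatically $\xi$-orthogonal; also the statement in the text parametrizes by direction, i.e. works on the unit cotangent bundle, so I will phrase everything there. On $\{|\xi|=1\}$ the symbol $\d p_0$ pulls back, and non-collinearity with $\xi\,\d x$ becomes simply: the restriction of $\d p_0$ to the unit cosphere bundle does not vanish, i.e. the Hamilton vector field of $p_0$ (as a vector field on $S^*\R^2$) is nonzero at that point.

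The concrete step is to introduce coordinates on $S^*\R^2 \cong \R^2_x \times S^1_\vartheta$ via $\xi = (\cos\vartheta,\sin\vartheta)$, so $\xi^\perp/|\xi| = (-\sin\vartheta,\cos\vartheta) =: \theta$, and then $p_0$ restricted to the cosphere bundle is just the function $(x,\theta)\mapsto W(x,\theta)$, with $\theta$ running over $S^1$. Its differential on $\R^2_x\times S^1_\theta$ has components $\partial_{x_1}W$, $\partial_{x_2}W$, $\partial_{\theta^\perp}W$ (the last being the angular derivative, since moving $\vartheta$ moves $\theta$ in the $\theta^\perp$ direction up to sign; I will note $\partial_\vartheta\theta = \theta^\perp$ or $-\theta^\perp$ depending on orientation, which only affects a sign and not the vanishing). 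So $\d p_0|_{S^*} = 0$ at a point $(x,\theta)$ iff $\partial_{x_1}W = \partial_{x_2}W = \partial_{\theta^\perp}W = 0$ there, i.e. iff $\partial_x W = 0$ and $\partial_{\theta^\perp}W = 0$. Combined with the characteristic condition $W=0$, real principal type fails at some $(x,\theta)$ with $x\in\Omega$ exactly when $W = \partial_{x_1}W = \partial_{x_2}W = \partial_{\theta^\perp}W = 0$ there. Now I must reconcile ``$\partial_x W = 0$'' with the displayed condition ``$\theta\cdot\partial_x W = 0$''. The point is the cancellation $\theta\cdot\partial_x W \equiv 0$ already holds identically, analogous to the Euler-type identity: differentiating $I_w$ along the line direction produces nothing, or more concretely one checks from the definition \r{W1} and the fact that the beam-transform-type weights enter... — actually the cleaner route is: for a weighted X-ray transform the relevant symbol depends on $(x,\theta)$ only through the line, so $\theta\cdot\partial_x$ of the symbol data vanishes. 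Hence only the derivative transverse to $\theta$, namely $\theta^\perp\cdot\partial_x W$, is a genuine new quantity, and ``$\partial_x W = 0$'' reduces to ``$\theta^\perp\cdot\partial_x W = 0$''. That is why the displayed list in \r{1.15} can be written with $\theta\cdot\partial_x W$ — wait, it has $\theta\cdot\partial_x W$, not $\theta^\perp\cdot\partial_x W$. I will need to check the paper's convention carefully; likely their $\theta$ versus $\theta^\perp$ is swapped relative to mine through the $\xi \leftrightarrow \xi^\perp$ in $p_0(x,\xi)=W(x,\xi^\perp/|\xi|)$, so that the spatial direction appearing is the one perpendicular to the frequency $\xi$, which in their $W$-notation reads $\theta\cdot\partial_x$. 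I would make the substitution $\theta = \xi^\perp/|\xi|$ explicit precisely so that this bookkeeping comes out right: $x$-translation in the direction of $\xi$ itself (not $\theta$) is the ``trivial'' one built into $p_0$'s being pulled back from line space, and $\xi = -\theta^\perp$, hence the nontrivial spatial derivative is $\theta\cdot\partial_x$.

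So the key steps, in order: (1) State that real, scalar, homogeneous degree $0$ are immediate from Proposition~\ref{pr_1.1} and the standing hypotheses, so only the transversality clause is at issue. (2) Parametrize the unit cotangent bundle by $(x,\vartheta)$, $\xi=(\cos\vartheta,\sin\vartheta)$, and observe $p_0(x,\xi)=W(x,\theta)$ with $\theta=\xi^\perp$. (3) Compute $\d p_0$ along $S^*\R^2$: its $x$-part is $\partial_x W(x,\theta)$ and its $\vartheta$-part is $\pm\partial_{\theta^\perp}W(x,\theta)$. (4) Observe the identity $\xi\cdot\partial_\xi p_0 \equiv 0$ (degree-$0$ homogeneity) translates, after the $\xi\mapsto\xi^\perp$ rotation, into $\theta^\perp\cdot\partial_x$(line data) being the ``vertical'' direction already quotiented out — more precisely, that the condition ``$\d p_0$ collinear to $\xi\,\d x$'' on the cosphere bundle is just ``$\d p_0|_{S^*}=0$''. (5) Conclude: $P$ is real principal type in $\Omega$ iff there is no $(x,\theta)\in\Omega\times S^1$ with $W=0$ and $\d p_0|_{S^*}=0$, i.e. with $W$, $\theta\cdot\partial_x W$, $\partial_{\theta^\perp}W$ all vanishing, which is \r{1.15}. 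I expect the main obstacle to be exactly step (4): getting the collinearity-with-$\xi\,\d x$ condition correctly reduced to vanishing of the restricted differential, keeping the $\xi$ versus $\xi^\perp$ rotation and the homogeneity bookkeeping straight, and confirming that no extra radial component survives — everything else is a direct unpacking of definitions. One should also double-check that the smoothness remark at the end of the paragraph preceding the proposition (that the condition makes $\{W=0\}$ codimension one) is consistent, but that is a byproduct, not part of the equivalence to be proved.
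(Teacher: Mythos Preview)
Your overall strategy matches the paper's: reduce to the clause ``$\d p_0$ not collinear to $\xi\,\d x$ on $\{p_0=0\}$'' and translate it into a condition on $W$. But step (4) contains a genuine error, and your attempted patches are based on identities that do not hold.

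The claim ``\,$\d p_0$ collinear to $\xi\,\d x$ on the cosphere bundle is just $\d p_0|_{S^*}=0$\,'' is false. Collinearity with $\xi\,\d x$ means $\partial_\xi p_0=0$ \emph{and} $\partial_x p_0$ is a (possibly nonzero) scalar multiple of $\xi$. By contrast, $\d p_0|_{S^*}=0$ (pulling back the $1$-form to $\{|\xi|=1\}$) means $\partial_\xi p_0=0$ and $\partial_x p_0=0$ in full. These differ exactly by allowing a nonzero $\xi$-component in $\partial_x p_0$. In symplectic terms: collinearity with $\xi\,\d x$ is equivalent to $H_{p_0}$ being parallel to the radial field $\xi\cdot\partial_\xi$, whereas $\d p_0|_{S^*}=0$ is equivalent to $H_{p_0}$ being parallel to $\xi\cdot\partial_x$. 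Your subsequent attempts to rescue the argument by asserting that ``$\theta\cdot\partial_x W\equiv 0$'' or that ``$p_0$ is pulled back from line space so one spatial derivative is trivial'' are simply not true: $W(x,\theta)$ is built from arbitrary weights $w_j(x,\theta)$ and there is no identity killing any $x$-derivative of $W$.

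The paper's argument is the direct one you should use. Extend $W$ to $\theta\neq 0$ as homogeneous of degree $0$ and compute on $T^*\R^2\setminus 0$: $\partial_x p_0=\partial_x W(x,\xi^\perp)$ and $\partial_\xi p_0=(W_{\theta_2},-W_{\theta_1})(x,\xi^\perp)$. Collinearity with $\xi\,\d x$ then reads: (i) $\partial_\xi p_0=0$, and (ii) $\partial_x W(x,\xi^\perp)$ is parallel to $\xi$, i.e.\ orthogonal to $\xi^\perp$, i.e.\ $\xi^\perp\!\cdot\partial_x W=0$. Setting $\theta=\xi^\perp/|\xi|$, (ii) becomes $\theta\cdot\partial_x W=0$, and (i) becomes $\nabla_\theta W=0$, which by degree-$0$ homogeneity in $\theta$ reduces to $\partial_{\theta^\perp}W=0$. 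Together with $W=0$ this gives exactly \r{1.15}. Note that the component $\theta^\perp\!\cdot\partial_x W$ (equivalently, the $\xi$-component of $\partial_x p_0$) is \emph{unconstrained}; this is precisely what your restriction-to-$S^*$ argument misses.
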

\begin{proof}
Extend $W$ to $\theta\not=0$ as a homogeneous function of order $0$.   
We have
\[
\begin{split}
\d  p_0 &= \partial_x W(x,\xi^\perp)\, \d x + (\partial/\partial\xi) W(x,\xi^\perp)\, \d \xi\\
  &= W_x(x,\xi^\perp)\,\d x + (W_{\theta_2},-W_{\theta_1})(x,\xi^\perp)\,\d \xi.
\end{split}
\]
Then  $P$ is of real principal type if an only if $W(x,\xi^\perp)=0$ and $(\partial/\partial\xi) W(x,\xi^\perp)=0$ imply that $\partial_x W(x,\xi^\perp)$ is not collinear with $\xi$. The latter is equivalent to the requirement that  $\partial_x W(x,\xi^\perp)$ is not normal to $\xi^\perp$, i.e., $\xi^\perp\cdot \partial_x W(x,\xi^\perp)\not=0 $. Set $\theta=\xi^\perp$. Then the requirement is the following:  if $W(x,\theta)=0$ and $(\partial_{\theta^\perp} W)(x,\theta)=0$, then $\theta\cdot\partial_x W(x,\theta)\not=0$. Notice next that the radial derivative $\partial/\partial|\theta|$ of $W$ vanishes on $W=0$, therefore the requirement on $\partial_\theta W$ is actually a requirement on  the angular derivative only. 
\end{proof}

The following condition plays a critical role in the theory of local solvability of \PDO s of real principal type. Let $K$ be a compact subset of $\R^2$. 

\begin{definition} \label{def_nt}
We say that  $K$ is  \emph{non-trapping} (for $p_0$) if there is no maximally extended zero bicharacteristic that lies entirely over $K$. We call the projections of the zero bicharacteristics of $p_0$ to the $x$ space \emph{rays}. 
\end{definition}

Notice that the rays are continuous but not necessarily smooth curves. They may even degenerate to a point, see Example~\ref{ex_radial}. 

H\"ormander's  propagation of singularities theorem (see, e.g., \cite[VI.2.1]{Taylor-book0} or   \cite[Theorem~26.1.4]{Hormander4}) implies that if ${\f}$ is supported in a non-trapping $K$, and $P{\f}\in H^s$, then ${\f}\in H^{s-1}$. In other words, we have non-local  hypoellipticity, with a loss of one derivative. As a consequence, by the open mapping theorem, for any $s$ and $\ell$, there is $C>0$ so that
\be{1.18}
\|{\f}\|_{H^{s-1}}\le C\|P{\f}\|_{H^s} + C\|{\f}\|_{H^\ell},\quad \forall {\f}\in C_0^\infty(K),
\ee
see eqn.~(VI.3.3) in \cite{Taylor-book0}.

\begin{example}\label{ex_radial}  
Let $w_1=\frac12\theta\cdot x$ and $w_2=1$. Notice that $w_1$ is not non-vanishing. In this case, 
$W=\theta\cdot x$. Then $|\xi| p_0=x\cdot\xi^\perp = x_2 \xi_1-x_1\xi_2$ and   $\Sigma$ consists of $(x,\xi)$, $\xi\not=0$ that are collinear. In other words, all singularities that may not be recoverable are the radial ones. The Hamiltonian equations are given by
\[
\dot x_1 = x_2, \quad \dot x_2 = -x_1,\quad \dot \xi_1 = \xi_2, \quad \dot \xi_2 = -\xi_1.
\]
The zero bicharacteristics then are  given by
\be{1.18ch}
x= R(\sin t,\cos t), \quad \xi = \lambda (\sin t,\cos t), \quad R\ge0, \, \lambda\not=0. 
\ee
Their projections on the base (the rays) are given by the circles $x= R(\sin t,\cos t)$, $R\ge0$. If $R=0$, then that projection is a point. The whole bicharacteristic is not stationary however and is given by $x=0$, $\xi= \lambda (\sin t,\cos t)$, $\lambda\not=0$. We then see that a compact set K is non-trapping   if and only if $K$ contains no entire circle $|x|=R$, $R\ge0$ (including the origin), see Figure~\ref{fig:fig_identification2}.  
Then $\mathcal{I}{\f}$ recovers the singularities of ${\f}=({\f}_1,{\f}_2)$. If $K$ is trapping, the singularities that may not be possible to be recovered are the radial ones. 

Inverting $\mathcal{I}{\f} = I_{w_1}{\f}_1+I_{1}{\f}_2$ is easy. The first term is odd w.r.t.\ $\theta$, and the second one is even. The equation $\mathcal{I}{\f}=h$ then decouples into two equations $I_{w_1}{\f}_1=h_{\rm odd}$, $I_1{\f}_2=h_{\rm even}$. The kernel of $\mathcal{I}$ (on $\mathcal{E}'(\R^2)$) then consists of pairs $({\f}_1,0)$, where ${\f}_1\in \Ker I_{w_1}$. Using arguments similar to the Fourier Slice theorem, see Section~\ref{sec_micro},  we can see easily that $I_{w_1}{\f}_1=0$ if and only if $(x_2\partial_1-x_1\partial_2){\f}_1=0$ (the operator in the parentheses is $p_0(x,D)$, up to an elliptic  factor)   and the solutions of the latter in $\mathcal{E}'(\R^2)$ are given by all compactly supported radial distributions. In particular, on $L^2_{\text{comp}}(B(0,1))$, the kernel of $I_{w_1}$ consists of all radial  functions in that space.  We therefore get
\be{1.18Ker}
\Ker \mathcal{I} = \left\{ ({\f}_1,0);\; \text{${\f}_1$ is radial}\right\}.
\ee
Since radial functions can have (radial) singularities at all points, we get that no radial singularity of ${\f}_1$, i.e., a singularity of the type $(x=\mu \xi,\xi)$, $\mu\in \R$, $\xi\not=0$, can be recovered in general. On the other hand, if $K$ is non-trapping for $p_0$, and $\supp {\f}\subset K$, then they can. In that case, $I_{w_1}$ is microlocally  equivalent to the derivative w.r.t.\ to the polar angle in $\R^2$; and since on each circle there is an open arc where ${\f}=0$, that circle cannot support a singularity of ${\f}$, by the propagation of the singularities theorem. In fact, by \r{1.18Ker}, if $K$ is non-trapping, we have more: $\f_1=\f_2=0$.

This example also reveals that $\mathcal{I}{\f}\in C^\infty$ is not microlocally equivalent to \r{1.12}, see Section~\ref{sec_micro} for more details. Indeed, only the radial singularities of ${\f}_1$ are not recoverable, while those of ${\f}_2$ are recoverable. Moreover, 
assume that $\mathcal{I}{\f}\in H^s(Z)$. Then ${\f}_2\in H^{s-1/2}$ by the usual inversion results. We can think of $I_{w_1}{\f}_1$ as the Doppler X-ray transform of the vector field ${\f}_1(x)x$. It is well know that we can only reconstruct the curl of ${\f}_1(x)x$ that is $(x_1\partial_2 - x_2\partial_1){\f}_1$, and the latter is in $H^{s-3/2}$. Let $\supp {\f}_1\in K$, with $K$ non-trapping, for example, assume that the ray $\{x_1\ge0, \; x_2=0\}$ does not intersect $K$. Then, in polar coordinates $(r,\phi)$, 
\[
\f_1(r,\phi) = \int_0^\phi \left[  (x_1\partial_2 - x_2\partial_1)\f \right](r,\psi)\,\d\psi. 
\]  
This integration is not smoothing (not in all directions), and we still have ${\f}_1\in H^{s-3/2}$, with an improved regularity in angular directions. This is consistent with \r{1.18b} below but as we see, the one derivative loss is only in ${\f}_1$, and the that estimate does not reveal the extra regularity in characteristic directions. The latter is however reflected by the fact that $\WF({\f}_1)$ can have radial directions only. 
\end{example}

It is interesting to know when the rays are smooth curves. The projection of a bicharacteristic to the $x$ variables, with its parameterization determined by the Hamiltonian equation, at some $x$, has a tangent vector $\partial_\xi p_0$ evaluated at some $(x,\xi)\in \Sigma$ (i.e., $p_0(x,\xi)=0$). This projection is non-degenerate, and therefore, that ray is a smooth curve, if $\partial_\xi p_0\not=0$. 
There might be more than one $\xi$ with that property but there is at least one $\xi$ (and the whole line spanned by it) because $p_0$ is odd in $\xi$. 
On the other hand, $\xi\cdot\partial_\xi p_0=0$ on $\Sigma$, therefore a tangent vector is actually $\xi^\perp$ and the whole line that it spans. 

Translating this in terms of $W$, see \r{1.13}, we get the following. If for some $(x_0,\theta_0)$, we have
\be{1.18x}
\text{$W(x_0,\theta_0)=0$ and $\partial_{\theta^\perp} W(x_0,\theta_0)\not=0$},
\ee
then there is a smooth ray through it. Moreover,  starting from $(x_0,\theta_0)$ with that property, by the implicit function theorem, we can solve $W(x,\theta)=0$ locally for $\theta$. This gives us a smooth unit vector field, with integral curves that are rays.  
Then $W=0 \Rightarrow\partial_{\theta^\perp}W\not=0$ on $K\times S^1$ is a sufficient condition for all rays through $K$ to be smooth.

\subsection{Basic Properties of $\mathcal{I}$} 

Below, $\mathcal{E}'(K)$ stands for the space of distributions supported in $K$, and we similarly define $H_0^s(K)$. Also, since ${\f}$ is vector valued, $L^2(K)$, $H^S(K)$ below are spaces of vector valued functions. 

\begin{theorem}\label{thm_lin}
Let $\mathcal{I}$ be as in \r{1.5} with $w_1$ and $w_2$ smooth. Let $K\subset \R^2$ be a non-trapping compact set. Then for any $s\ge0$ we have the following.

(a) For any  $\ell$, there exist a constant $C>0$ so that
\be{1.18a}
\|{\f}\|_{H^s(K)}\le C \|\mathcal{I}{\f}\|_{H^{s+3/2}(Z)} + C\|{\f}\|_{H^\ell(\R^2)}, \quad \forall {\f}\in C_0^\infty(K). 
\ee

(b) The kernel of $\mathcal{I}$ on $\mathcal{E}'(K)$ is finite dimensional, and consist of $C_0^\infty(K)$ functions ${\f}$. 

(c) On the orthogonal complement of $\Ker \mathcal{I}$ in $H_0^s(K)$, we have
\be{1.18b}
\|{\f}\|_{H^s(K)}\le C \|\mathcal{I}{\f}\|_{H^{s+3/2}(Z)}. 
\ee
In particular, if $\mathcal{I}$ is injective on $C_0^\infty(K)$, then \r{1.18b} holds. 
\end{theorem}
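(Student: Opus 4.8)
The plan is to deduce all three parts from the real-principal-type structure established in Propositions~\ref{pr_1.1} and the consequences recorded in \r{1.18}, treating $\mathcal{I}$ as an FIO whose composition with the explicit first-order operators of \r{1.12} is the $\Psi$DO $P$ of order $0$. First I would record the mapping properties: since $w_{1,2}$ are smooth and compactly supported in the $x$-variable, $I_{w_j}$ maps $H^s_0(K)\to H^{s+1/2}(Z)$ continuously (this is the standard smoothing estimate for the X-ray transform, already implicit in Section~\ref{sec_prel}), and the operators $\partial_k I'_{\theta_l J w_j}$ appearing in \r{1.12} are order $+1/2$ in the reverse direction, so $P=\text{(order $1/2$ in $h$)}\circ\mathcal{I}$ and consequently $\|P\f\|_{H^{s+1}}\le C\|\mathcal{I}\f\|_{H^{s+3/2}(Z)}$. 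Here I must be slightly careful about the definition \r{A.4} of $H^s(Z)$, which only controls $p$-derivatives; but the first-order operators in \r{1.12} differentiate in $x$, which after the $Q$-type reduction corresponds to $p$-differentiation along $Z$, so the bookkeeping goes through.

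For part (a): combine the previous estimate with \r{1.18}, applied at the level $s$: for $\f\in C_0^\infty(K)$ we get $\|\f\|_{H^{s}(K)}=\|\f\|_{H^{(s+1)-1}}\le C\|P\f\|_{H^{s+1}}+C\|\f\|_{H^\ell}\le C\|\mathcal{I}\f\|_{H^{s+3/2}(Z)}+C\|\f\|_{H^\ell(\R^2)}$, which is exactly \r{1.18a}. For part (b): the kernel $N:=\Ker\mathcal{I}$ on $\mathcal{E}'(K)$ is contained in $\Ker P$; if $\f\in N$ then $P\f=0\in C^\infty$, so by the non-local hypoellipticity consequence of H\"ormander's theorem (stated right after Definition~\ref{def_nt}) we get $\f\in H^{s-1}$ for every $s$, hence $\f\in C^\infty$, and being compactly supported in $K$, $\f\in C_0^\infty(K)$. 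Finite-dimensionality is the standard functional-analytic argument: on $N$, equipped with the $H^\ell$ norm, the estimate \r{1.18a} (with $\mathcal{I}\f=0$) gives $\|\f\|_{H^{s}(K)}\le C\|\f\|_{H^\ell(\R^2)}$ for $s>\ell$, so the identity map $N\to N$ is compact (Rellich), forcing $\dim N<\infty$; alternatively invoke the open-mapping/parametrix argument already cited via eqn.~(VI.3.3) of \cite{Taylor-book0}.

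For part (c): by part (b), $N=\Ker\mathcal{I}$ is a finite-dimensional subspace of $C_0^\infty(K)$, so it is a closed subspace of $H^s_0(K)$ and its $L^2$-orthogonal complement $N^\perp$ within $H^s_0(K)$ is well-defined and closed. On $N^\perp$ I want to remove the weak term $C\|\f\|_{H^\ell(\R^2)}$ from \r{1.18a}. This is the usual compactness-contradiction step: suppose \r{1.18b} fails on $N^\perp$; then there is a sequence $\f_k\in N^\perp$ with $\|\f_k\|_{H^s(K)}=1$ and $\|\mathcal{I}\f_k\|_{H^{s+3/2}(Z)}\to 0$. By Rellich, after passing to a subsequence $\f_k\to\f$ in $H^\ell(\R^2)$ (and weakly in $H^s$), and by \r{1.18a} the sequence is Cauchy in $H^s(K)$, so $\f_k\to\f$ in $H^s(K)$ with $\|\f\|_{H^s}=1$; since $\mathcal{I}$ is continuous $H^s_0(K)\to H^{s+3/2}(Z)$, $\mathcal{I}\f=0$, so $\f\in N$; but $\f\in N^\perp$ (closed), so $\f=0$, contradicting $\|\f\|=1$. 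Hence \r{1.18b} holds on $N^\perp$, and in particular when $\mathcal{I}$ is injective, $N=0$ and \r{1.18b} is unconditional.

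The main obstacle I anticipate is not any single step in isolation but the precise functional-analytic compatibility between the several Sobolev scales in play: the anisotropic definition \r{A.4} of $H^s(Z)$ (no $\theta$-derivatives) versus the $H^s(\mathbf{T}^2)$ scale on the base, and making sure the order count in $P=(\text{l.o.t.\ operators})\circ\mathcal{I}$ genuinely lands $P$ as a $\Psi$DO of order $0$ on $\R^2$ with the stated loss of $3/2$ derivatives relative to $Z$ — i.e., verifying that the half-derivative smoothing of the X-ray transform and the $+1$ order of the first-order operators combine to the $3/2$ in \r{1.18a}–\r{1.18b}, and that the reduction to \r{1.12} does not secretly require control of $\theta$-derivatives of $h$ that \r{A.4} does not provide. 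Once that bookkeeping is pinned down, parts (a)–(c) are the standard real-principal-type package.
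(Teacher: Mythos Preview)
Your proposal is correct and follows essentially the same route as the paper: part~(a) via \r{1.18} combined with the mapping properties of $I'_w$ (the paper's Proposition~\ref{pr_I} confirms exactly the order count you anticipated, resolving your worry about the anisotropic norm \r{A.4}), part~(b) via propagation of singularities plus Rellich, and part~(c) by removing the compact error term (the paper cites \cite[Proposition~V.3.1]{Taylor-book0}, whose proof is precisely your compactness--contradiction). One minor slip: $\mathcal{I}$ is only continuous $H^s_0(K)\to H^{s+1/2}(Z)$, not $H^{s+3/2}(Z)$ --- this does not affect the conclusion $\mathcal{I}\f=0$ in~(c), but it does mean the extension of \r{1.18b} from $C_0^\infty(K)$ to $H^s_0(K)$ is not immediate by density, which is why the paper passes through the graph-norm closure $\mathcal{D}$; you may want to make that step explicit.
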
 

\begin{proof}
To prove (a), we will apply \r{1.18}. To this end, replace $P$ by $\chi P\chi$, where $\chi\in C_0^\infty$ equals $1$ near $K$ (this makes $P$ properly supported, in particular). Let $\Omega$ be a bounded domain containing $\supp\chi$.  Use estimate \r{1.18} combined with \r{1.12} to get for any fixed $\ell$, 
\be{1.20}
\begin{split}
\|{\f}\|_{H^s_0(K)} &\le C\sum_{i,j=1}^2\|I'_{\theta_iJw_j}\mathcal{I}{\f}\|_{H^{s+2}(\Omega)} + C\|{\f}\|_{H^\ell}\\
&\le C\| \mathcal{I}{\f}\|_{H^{s+3/2}(Z)}+C\|{\f}\|_{H^\ell}, \quad \forall {\f}\in C_0^\infty(K),
\end{split}
\ee
see also Proposition~\ref{pr_I}. Notice the one derivative loss in this estimate since $\mathcal{I}$ is of order $-1/2$, see the Appendix. If we replace $\mathcal{I}$ by a single weighted X-ray transform $I_w$ with a non-vanishing weight $w$, then one has the same estimate but with $H^{1/2}(Z)$. 
We also note that $\mathcal{I}{\f}$ has compact support. 

Consider (b). Every ${\f}\in \mathcal{E}'(K)$ in the kernel of $\mathcal{I}$ must be smooth by propagation of singularities and by the assumption that $K$ is non-trapping. Apply then \r{1.18a} to get 
\[
\|{\f}\|_{H^1(K)}\le  C\|{\f}\|_{L^2(K)}, \quad \forall {\f}\in \Ker \mathcal{I}\cap \mathcal{E}'(K) = \Ker \mathcal{I}\cap C_0^\infty(K). 
\]
Since the inclusion $H^1_0(K)\hookrightarrow  L^2(K)$ is compact, we get the finite dimensionality of $\Ker \mathcal{I}$ on $K$. 

Consider (c). Let $\mathcal{D}$ be the closure of $C_0^\infty(K)$ under the graph norm $\|{\f}\|_{H^s_0(K)}+\|\mathcal{I}{\f}\|_{H^{s+3/2}(Z)}$. We consider now $\mathcal{I}$ as an operator from $\mathcal{D}$ to $H^{s+3/2}(Z)$. Then $\mathcal{I}$ is a well defined bounded operator. Indeed, $\mathcal{D}$ is a subspace of the space of the compactly distributions, together with the topology. Then $\mathcal{I}$ can be considered as an operator originally defined as $\mathcal{I}: \mathcal{E}'(\R^2) \to \mathcal{E}'(Z)$, and then restricted to $\mathcal{D}$. We then get
\[
\|{\f}\|_{\mathcal{D}} \le C\| \mathcal{I}{\f}\|_{H^{s+3/2}(Z)}+C\|{\f}\|_{H^\ell}, \quad \forall {\f}\in \mathcal{D}.
\]
By (a), $\mathcal{I}$ is injective on $\mathcal{D}\cap (\Ker \mathcal{I})^\perp$. Then by \cite[Proposition~V.3.1]{Taylor-book0}, for $\ell<s$,  we have the same inequality as above  on $\mathcal{D}\cap (\Ker \mathcal{I})^\perp$ but without the last term. We refer also to \cite[Lemma~3]{SU-Duke} as well for similar arguments, or to inequality (26.1.6) in \cite{Hormander4}.
\end{proof} 

\subsection{Conditions for injectivity of $\mathcal{I}$} 
\begin{corollary}\label{cor_nt}
Let $w_1$ and $w_2$ be smooth. Let $x_0\in \R^2$ be such that 
\be{1.21}
W(x_0,\theta)=0 \quad \Longrightarrow\quad  \partial_{\theta^\perp}W(x_0,\theta)\not=0,\quad \forall \theta\in S^1. 
\ee
Then if $0<\eps\le1$, $\mathcal{I}$ is injective on distributions supported in the ball $B(x_0,\eps)$, and in particular, \r{1.18b} holds for $K= B(x_0,\eps)$. 
\end{corollary}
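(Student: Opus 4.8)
The plan is to reduce the statement to an application of Theorem~\ref{thm_lin}(c), so the only real work is to check the hypotheses: that a sufficiently small ball $B(x_0,\eps)$ is non-trapping for $p_0$, and that $\mathcal{I}$ is injective on $\mathcal{E}'\big(B(x_0,\eps)\big)$.

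\emph{Step 1: non-trapping.} First I would argue that condition \r{1.21} at $x_0$ propagates to a neighborhood. The set $\{(x,\theta)\in \R^2\times S^1;\ W(x,\theta)=0\}$ is compact in the $\theta$-variable, so if at $x=x_0$ every zero $\theta$ of $W(x_0,\cdot)$ satisfies $\partial_{\theta^\perp}W(x_0,\theta)\neq 0$, then by a standard compactness/continuity argument there is $\delta>0$ and a neighborhood $U\ni x_0$ such that $|W(x,\theta)|+|\partial_{\theta^\perp}W(x,\theta)|\ge \delta>0$ for all $(x,\theta)\in \overline U\times S^1$. In particular \r{1.18x} holds at every $(x,\theta)$ with $x\in \overline U$ and $W(x,\theta)=0$, so by the discussion following Proposition~\ref{pr_rpt} every ray through $\overline U$ is a smooth curve; moreover, solving $W(x,\theta)=0$ for $\theta$ via the implicit function theorem near each such point gives a smooth unit direction field on $U$ (after shrinking $U$), whose integral curves are the rays. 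A smooth nonvanishing direction field has no curve trapped in an arbitrarily small ball: parametrizing by arclength, $|\dot x|=1$, so in time $2\eps$ the ray must exit $B(x_0,\eps)$ once $B(x_0,\eps)\subset U$. Hence for $\eps$ small enough, $K=B(x_0,\eps)$ is non-trapping.

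\emph{Step 2: injectivity on the small ball.} Since $K=B(x_0,\eps)$ is non-trapping, Theorem~\ref{thm_lin}(b) tells us $\Ker\mathcal{I}\cap\mathcal{E}'(K)$ is finite-dimensional and consists of smooth functions, and part (a) gives the estimate \r{1.18a}. To upgrade this to injectivity for small $\eps$, I would run the usual shrinking-support argument: suppose, for contradiction, that $\mathcal{I}$ is not injective on $\mathcal{E}'\big(B(x_0,\eps)\big)$ for any $\eps\in(0,1]$. Taking $\eps=1/k$ one gets nonzero $\f^{(k)}\in C_0^\infty(B(x_0,1/k))$ with $\mathcal{I}\f^{(k)}=0$; normalize $\|\f^{(k)}\|_{L^2}=1$. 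Apply \r{1.18a} with $s=0$ (say $\ell=-1$) to the fixed ball $K_0=B(x_0,1)$: since $\mathcal{I}\f^{(k)}=0$ this gives $\|\f^{(k)}\|_{L^2}\le C\|\f^{(k)}\|_{H^{-1}}$ with $C$ independent of $k$. But $\f^{(k)}\rightharpoonup 0$ in $L^2$ (it is supported in shrinking balls and bounded in $L^2$), hence $\f^{(k)}\to 0$ strongly in $H^{-1}$ by Rellich on the fixed compact $K_0$ — contradicting $\|\f^{(k)}\|_{L^2}=1$. Therefore $\mathcal{I}$ is injective on $\mathcal{E}'\big(B(x_0,\eps)\big)$ for some $\eps_0>0$, and then trivially for all $\eps\le\eps_0$; since the statement allows shrinking, we may take $0<\eps\le1$ with the understanding that the conclusion is for $\eps$ small (or simply note that the argument directly produces the threshold).

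\emph{Step 3: conclusion.} With $K=B(x_0,\eps)$ non-trapping and $\mathcal{I}$ injective on $C_0^\infty(K)$, the last sentence of Theorem~\ref{thm_lin}(c) immediately yields the stability estimate \r{1.18b} for this $K$, completing the proof.

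\emph{Main obstacle.} The delicate point is Step~1: one must be careful that \r{1.21}, a pointwise condition at $x_0$, really does give a uniform lower bound $|W|+|\partial_{\theta^\perp}W|\ge\delta$ on a whole neighborhood — this uses compactness of $S^1$ together with continuity of $W$ and $\partial_{\theta^\perp}W$ — and that the resulting direction field (well-defined only after the implicit function theorem is applied patch-by-patch along the zero set) is genuinely smooth and nonvanishing, so that its integral curves cannot be trapped in a small ball. The shrinking-support compactness argument in Step~2 is routine given the estimate \r{1.18a}, but it does quietly use that the constant $C$ there is uniform over subsets of the fixed ball $B(x_0,1)$, which is clear since \r{1.18a} is stated for all $\f\in C_0^\infty(K)$ with $K$ the larger ball and any smaller support is allowed.
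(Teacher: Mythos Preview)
Your overall strategy is correct and matches the paper's: show a small ball is non-trapping, then run a shrinking-support contradiction to get injectivity, and invoke Theorem~\ref{thm_lin}(c).

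One small gap in Step~1: the assertion ``a smooth nonvanishing direction field has no curve trapped in an arbitrarily small ball; parametrizing by arclength, $|\dot x|=1$, so in time $2\eps$ the ray must exit'' is false as stated --- $|\dot x|=1$ alone permits closed orbits (a circle of radius $\eps/2$ sits happily inside $B(x_0,\eps)$). What you actually need, and implicitly have from the implicit function theorem, is that on a small enough ball each branch $\theta(x)$ of solutions to $W(x,\theta)=0$ is uniformly close to the constant direction $\theta(x_0)$; then the component $x(t)\cdot\theta(x_0)$ is strictly monotone along the ray and forces exit in finite time. This is an easy fix, and the paper itself is equally terse here (it simply asserts that there are finitely many rays through $x_0$, all smooth, and that a small ball is therefore non-trapping).

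Your Step~2 takes a slightly different route from the paper. The paper uses Theorem~\ref{thm_lin}(b) directly: the kernel on the fixed ball $B(x_0,\eps_0)$ is a finite-dimensional space $V$ of smooth functions, and one cannot have nonzero elements of $V$ with supports shrinking to a point --- the contradiction is obtained by noting that $-\Delta$ is bounded on the finite-dimensional $V$, whereas the first Dirichlet eigenvalue on $B(x_0,1/j)$ tends to infinity. Your argument instead applies the a~priori estimate~\r{1.18a} with $\ell<0$ and the compact embedding $L^2\hookrightarrow H^{-1}$ to get $\|\f^{(k)}\|_{H^{-1}}\to 0$ while $\|\f^{(k)}\|_{L^2}=1$. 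Both arguments exploit the same underlying compactness; the paper's version foregrounds the finite-dimensionality of the kernel, yours the estimate itself. Either is perfectly acceptable.
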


\begin{proof}
Condition \r{1.21} guarantees that any ray through $x_0$ is smooth at $x_0$, and there are finite number of such rays. There is $\eps_0>0$ so that $B(x_0,\eps_0)$ is non-trapping. 

Assume the opposite. Then for any  $\eps=1/j$, $j\ge1$, there is a non-trivial $C_0^\infty$ function supported in $B(x_0,1/j)$ in the kernel of $\mathcal{I}$.  
Then  we  get an infinite number of  non-trivial  functions $\phi_j$ in the finitely dimensional space $V= \Ker \mathcal{I}\cap C_0^\infty(B(x_0,\eps_0))$, see Theorem~\ref{thm_lin}(b), with supports shrinking to the point $x_0$. This is a contradiction. Indeed, $-\Delta :V\to-\Delta V$ must be a bounded operator. On the other hand, $-\Delta$ is bounded below on $H^1_0(B(x_0,1/j))\cap H^2$ by its first eigenvalue $\mu_j$, that tends to infinity as $j\to\infty$. Therefore, $(-\Delta \phi_j,\phi_j)/\|\phi_j\|^2\to\infty$, which is a contradiction.
\end{proof}

\begin{theorem}\label{thm_w_an}
Let $w_1$, $w_2$ be analytic in $\Omega\times S^1$, where  $\Omega$ is an open set containing  a non-trapping compact set $K\subset\R^2$.  Then the operator $\mathcal{I}$, restricted to $\mathcal{E}'(K)$, is injective. 
\end{theorem}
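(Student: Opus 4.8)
The plan is to upgrade the $C^\infty$ microlocal analysis behind Theorem~\ref{thm_lin} to an analytic one and then run a propagation-of-analytic-singularities argument. Let ${\f}=({\f}_1,{\f}_2)\in\mathcal{E}'(K)$ with $\mathcal{I}{\f}=0$; by Theorem~\ref{thm_lin}(b) we already know ${\f}\in C_0^\infty(K)$, though we shall need the stronger analytic conclusion. By Proposition~\ref{pr_1.1}, $P{\f}=0$, where $P$ is a classical \PDO\ of order $0$ with scalar principal symbol $p_0(x,\xi)=W(x,\xi^\perp/|\xi|)$. Since $w_1,w_2$ are analytic on $\Omega\times S^1$ and $K\subset\Omega$, after conjugating $P$ with a cutoff $\chi\in C_0^\infty(\Omega)$ equal to $1$ near $K$ (as in the proof of Theorem~\ref{thm_lin}; this leaves $P{\f}$ unchanged), the weighted X-ray transforms $I_{w_j}$ and $I'_{\theta_iJw_j}$ entering \r{1.12} are analytic Fourier integral operators, so $P$ is an analytic \PDO\ near $K$ with analytic full symbol. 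We also assume $p_0$ is real-valued (e.g.\ $w_1,w_2$ real).

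First I would dispose of the elliptic directions. Wherever $p_0(x,\xi)\neq0$ the operator $P$ is analytically elliptic, so by analytic microlocal elliptic regularity (see e.g.\ the analytic calculus used in \cite{SU-Duke} and the references there, going back to Sj\"ostrand and H\"ormander) $(x,\xi)\notin\WF_A({\f})$; hence $\WF_A({\f})\subset\Sigma$. Next I would observe that the non-trapping hypothesis forces $P$ to be of real principal type near $K$: if $W$, $\theta\cdot\partial_xW$, $\partial_{\theta^\perp}W$ vanished simultaneously at some $(x_0,\theta_0)$ with $x_0\in K$, then, exactly as in the proof of Proposition~\ref{pr_rpt}, $dp_0$ would be collinear with $\xi\,dx$ at the corresponding $(x_0,\xi_0)\in\Sigma$, so $H_{p_0}$ would be radial (or zero) there and the maximal zero bicharacteristic through $(x_0,\xi_0)$ would lie entirely over $\{x_0\}\subset K$, contradicting Definition~\ref{def_nt}. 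Thus \r{1.15} holds on $K\times S^1$, hence, by compactness and continuity, on $\Omega'\times S^1$ for some open $\Omega'$ with $K\subset\Omega'\subset\Omega$, so $P$ is of real principal type over $\Omega'$.

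Now the propagation step. By the analytic counterpart of H\"ormander's propagation-of-singularities theorem for operators of real principal type (the $C^\infty$ version being \cite[Theorem~26.1.4]{Hormander4}), $\WF_A({\f})\cap\Sigma$ is, over $\Omega'$, invariant under the bicharacteristic flow of $p_0$ on $\{p_0=0\}\setminus\WF_A(P{\f})=\{p_0=0\}$. Suppose $(x_0,\xi_0)\in\WF_A({\f})$; then $(x_0,\xi_0)\in\Sigma$ and $x_0\in\supp{\f}\subset K$. Since $K$ is non-trapping, the maximal zero bicharacteristic $\gamma$ through $(x_0,\xi_0)$ is not entirely over $K$; because $\Omega'$ is open and contains $K$, the continuous curve $\gamma$ reaches some $(x_*,\xi_*)\in\gamma$ with $x_*\in\Omega'\setminus K$ while remaining over $\Omega'$ up to that moment. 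Near $x_*$ we have ${\f}\equiv0$, hence ${\f}$ is real-analytic there and $(x_*,\xi_*)\notin\WF_A({\f})$; by flow-invariance along the bicharacteristic segment from $(x_0,\xi_0)$ to $(x_*,\xi_*)$ (which lies over $\Omega'$) we get $(x_0,\xi_0)\notin\WF_A({\f})$ as well, a contradiction. Therefore $\WF_A({\f})=\emptyset$, so ${\f}$ is real-analytic; being compactly supported, ${\f}\equiv0$, and $\mathcal{I}$ is injective on $\mathcal{E}'(K)$.

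The step I expect to be the crux is the analytic microlocal input: verifying that the operators assembled from analytic weighted X-ray transforms are genuine analytic \PDO s (analytic canonical relations, analytic symbol estimates, closedness of the analytic calculus under the compositions in \r{1.12}) so that the analytic elliptic regularity and real-principal-type propagation theorems apply, together with the bookkeeping that the non-trapping hypothesis excludes the degenerate rays (those projecting to a single point) over $K$, as used above. Once this machinery is in place the remainder is the standard real-principal-type flow-out.
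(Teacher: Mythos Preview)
Your strategy is the same as the paper's: verify that $P$ is an analytic \PDO, run propagation of analytic singularities along the zero bicharacteristics, and use non-trapping together with compact support to force $\WF_A({\f})=\emptyset$, hence ${\f}=0$. Two points of comparison are worth noting. First, drop the conjugation by a $C_0^\infty$ cutoff: multiplying by a non-analytic function destroys the analytic-\PDO\ property you need, and it is unnecessary. The paper instead observes directly that $P$ is an analytic \PDO\ on $\Omega$ because each building block $I'_aI_b$ has the explicit amplitude \r{A.1amp}, which is visibly analytic when $a,b$ are; this is cleaner than invoking analytic FIO calculus for the individual X-ray transforms and their compositions. Second, your intermediate step ``non-trapping $\Rightarrow$ real principal type over $K$'' is correct (a point where $\d p_0$ is collinear with $\xi\,\d x$ yields a bicharacteristic projecting to a single point, hence trapped), but the paper bypasses it by citing Hanges \cite{Hanges81}, whose analytic propagation result applies to analytic \PDO s with real principal symbol without requiring the principal-type non-degeneracy, and which also carries matrix lower-order terms; this also lets the paper avoid your added assumption that $w_1,w_2$ are real.
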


\begin{proof} 
We use a result about propagation of analytic singularities, see \cite{Hanges81}, for analytic \PDO s with real principal symbols. The result in \cite{Hanges81} covers in fact a more general class of operators with complex-valued principal symbols that have real bicharacteristics and carries over to operators with matrix lower order terms. 

The operator $A$ is an analytic \PDO\ in $\Omega$ of order $0$. Indeed, to prove that, it is enough to prove that operators of the kind $I_a'I_b$, see \r{1.18}, are analytic \PDO s (\cite{Treves}) of order $-1$ when $a$, $b$ are analytic in $\Omega\times S^1$. The amplitude of such an operator is given by \r{A.1amp}, and it is clearly an analytic one , see also the proof of \cite[Proposition~1]{SU-JAMS}.

The propagation of singularities result in \cite{Hanges81} then implies that each zero bicharacteristic of $P$ in $K$ either consists of (analytic) singular points only, or does not intersect the analytic wave front set of ${\f}$. Since $K$ is non-trapping, we have the latter alternative. Therefore, the analytic wave front set of ${\f}$ is empty. Then ${\f}$ is analytic. Since ${\f}$ is of compact support, we get ${\f}=0$.  
\end{proof}

\subsection{Generic injectivity of $\mathcal{I}$}

Let $K$ be a non-trapping compact set. Then any small enough compact neighborhood $K'$ of $K$ is still non-trapping, see the proof of \cite[Theorem~26.1.7]{Hormander4}. Therefore, there exists an open $\Omega\supset K$ so that every compact subset of $\Omega$ is non-trapping. Then $P$ is of real principal type in $\Omega$, by the definition in \cite{Hormander4}. 

\begin{definition}
The set $\Omega$ is said to be \textit{pseudo-convex} w.r.t.\ $P$, if any compact subset is non-trapping, and for any compact set $K_1\subset\Omega$, there exists a compact set $K_2\subset \Omega$ so that  every bicharacteristic interval in $\Omega$ having endpoints over $K_1$, lies entirely over $K_2$. 
\end{definition}

In particular, if $K_1$ is convex w.r.t.\ the bicharacteritics (i.e., one can choose $K_2=K_1$), then $K_1$ is pseudo-convex.

Pseudo-convexity is a condition that guarantees existence if a global parametrix of $P$, see \cite{Hormander_actaII} and  \cite[Theorem~26.1.14]{Hormander4}. Under that condition, we  show below  that injectivity of $\mathcal{I}$ is preserved under small perturbations of the weights.

\begin{theorem}\label{thm_pert}
Let $K$ be non-trapping for $P$ and assume that there exits a pseudo-convex neighborhood $\Omega\supset K$  of $K$. Assume that $\mathcal{I}$ is injective on $\mathcal{E}'(K)$. Then there exist $k>0$ and $\eps>0$ so that for any $\tilde w_1$, $\tilde w_2$, $\eps$-close to $w_1$ and $w_2$ in $C^k(\bar \Omega)$, the corresponding operator $\tilde{ \mathcal{I}}$ is still injective, and the estimate \r{1.18b} holds with a constant $C$ independent of the particular choice of $\tilde w_1$, $\tilde w_2$.
\end{theorem}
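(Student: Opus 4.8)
The plan is to combine H\"ormander's global parametrix for operators of real principal type with a compactness argument, using that pseudo-convexity yields such a parametrix with constants that vary continuously with the weights. Fix $s\ge0$ (the numbers $k,\eps$ below may depend on $s$; the injectivity conclusion will not). For a pair $w=(w_1,w_2)$ write $\mathcal{I}_w$ for the operator \r{1.5} and, following Proposition~\ref{pr_1.1}, let
\[
P_w:=(\pi\i)^{-1}\big(\partial_1 I'_{\theta_2 Jw_2}-\partial_2 I'_{\theta_1 Jw_2},\; -\partial_1 I'_{\theta_2 Jw_1}+\partial_2 I'_{\theta_1 Jw_1}\big)\mathcal{I}_w,
\]
a classical matrix $\Psi$DO of order $0$ with scalar principal symbol $W_w(x,\xi^\perp/|\xi|)$, which by hypothesis is of real principal type in $\Omega$ with $\Omega$ pseudo-convex for it. The first point is that this structure is stable: the amplitudes of $I'_{\theta_iJw_j}$, hence the symbols of $P_w$ and the operators $\mathcal{I}_w$, are given by the formulas of the Appendix (Proposition~\ref{pr_symbol}) and involve only finitely many $(x,\theta)$-derivatives of $w_1,w_2$, so for $k$ large $\tilde w\mapsto\mathcal{I}_{\tilde w}$ and $\tilde w\mapsto P_{\tilde w}$ are continuous from a $C^k(\bar\Omega)$-ball around $w$ into the relevant operator norms (for data and output microlocalized near $K$). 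Since the zero bicharacteristic flow depends continuously on the Hamiltonian $W_{\tilde w}$, and non-trapping of compact subsets together with the pseudo-convexity defect are open conditions (as in the proof of \cite[Theorem~26.1.7]{Hormander4}), after possibly shrinking $\Omega$ to a smaller neighborhood of $K$ I may assume that for all $\tilde w$ in a small enough $C^k(\bar\Omega)$-ball the operator $\tilde P:=P_{\tilde w}$ is still of real principal type in $\Omega$ and $\Omega$ is still pseudo-convex for it; in particular $K$ remains non-trapping for $\tilde P$.

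Second, I would invoke H\"ormander's construction of a global left parametrix on a pseudo-convex set (\cite[Theorem~26.1.14]{Hormander4}, \cite{Hormander_actaII}, \cite{Treves}): for each such $\tilde w$ there is $E_{\tilde w}\in\Psi^0$, properly supported near $K$, with $E_{\tilde w}\tilde P=I+R_{\tilde w}$ on $\mathcal{E}'(K)$, $R_{\tilde w}$ smoothing. The point requiring real work is that this parametrix may be arranged to depend continuously on $\tilde w$: the construction — conjugation of $\tilde P$ to the normal form $\partial_{x_1}$ by an elliptic FIO adapted to the bicharacteristic foliation, explicit integration, and Borel summation of the symbol expansion — uses at each stage only finitely many symbol seminorms of $\tilde P$ over the fixed pseudo-convex $\Omega$, all of which depend continuously on finitely many $C^k(\bar\Omega)$-norms of $\tilde w$. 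Hence $E_{\tilde w},R_{\tilde w}$ are continuous in $\tilde w$ and $\|R_{\tilde w}\|_{H^\ell\to H^s}$ is uniformly bounded for any fixed $\ell<s$. Composing with Proposition~\ref{pr_1.1} gives an explicit left parametrix $\mathcal{B}_{\tilde w}$ with $\mathcal{B}_{\tilde w}\mathcal{I}_{\tilde w}=I+R_{\tilde w}$, and, arguing as in the proof of Theorem~\ref{thm_lin}(a) (see \r{1.20}) but with uniform constants, the \emph{uniform lossy estimate}
\[
\|\f\|_{H^s(K)}\le C\|\mathcal{I}_{\tilde w}\f\|_{H^{s+3/2}(Z)}+C\|\f\|_{H^\ell(\R^2)},\qquad \forall\,\f\in C_0^\infty(K),
\]
with $C$ independent of $\tilde w$ in the ball.

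Third, I would remove the error term by a compactness argument that produces injectivity of $\tilde{\mathcal{I}}$ and a uniform constant in \r{1.18b} at once. If the uniform estimate failed, there would be $\tilde w^{(n)}\to w$ in $C^k(\bar\Omega)$ and $\f_n\in C_0^\infty(K)$ with $\|\f_n\|_{H^s(K)}=1$ and $\|\mathcal{I}_{\tilde w^{(n)}}\f_n\|_{H^{s+3/2}(Z)}\to0$. The uniform lossy estimate forces $\liminf_n\|\f_n\|_{H^\ell(\R^2)}>0$, so by Rellich a subsequence converges in $H_0^\ell(K)$ and weakly in $H_0^s(K)$ to some $\f\neq0$. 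Since $\|\mathcal{I}_{\tilde w^{(n)}}-\mathcal{I}_w\|_{H_0^s(K)\to H^{s+3/2}(Z)}\to0$ by the mapping properties of weighted X-ray transforms, and $\|\f_n\|_{H^s(K)}=1$, we get $\mathcal{I}_w\f_n\to0$ strongly, while boundedness of $\mathcal{I}_w$ gives $\mathcal{I}_w\f_n\rightharpoonup\mathcal{I}_w\f$; hence $\mathcal{I}_w\f=0$, contradicting the injectivity of $\mathcal{I}_w$ on $\mathcal{E}'(K)$. So there are $k,\eps>0$ such that for all $\tilde w$ $\eps$-close to $w$ in $C^k(\bar\Omega)$, \r{1.18b} holds for $\tilde{\mathcal{I}}=\mathcal{I}_{\tilde w}$ with one common constant; by density it holds on $H_0^s(K)$, so $\tilde{\mathcal{I}}$ is injective there. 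Finally, for arbitrary $\f\in\Ker\tilde{\mathcal{I}}\cap\mathcal{E}'(K)$ one has $\tilde P\f\in C^\infty$, and since $\tilde P$ is of real principal type with $K$ non-trapping, propagation of singularities gives $\WF(\f)=\emptyset$, so $\f$ is smooth, lies in $H_0^s(K)$, and vanishes; thus $\tilde{\mathcal{I}}$ is injective on all of $\mathcal{E}'(K)$.

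The main obstacle is the second step: verifying that H\"ormander's parametrix construction can be carried out with bounds depending continuously on the perturbed weights over a fixed pseudo-convex $\Omega$. The remaining ingredients are soft — stability of real principal type and pseudo-convexity under $C^k$ perturbation, and the functional-analytic compactness argument. A convenient alternative packaging of the delicate step, in the spirit of \cite{SU-Duke, SU-JAMS}, is to realize the parametrix through the solution operators of the transport equations along the (continuously varying) bicharacteristics and to check their continuous dependence on $\tilde w$ directly.
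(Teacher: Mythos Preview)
Your argument is sound and shares with the paper its essential analytical input: H\"ormander's global parametrix on a pseudo-convex set, built so that $E_{\tilde w}P_{\tilde w}=\Id+R_{\tilde w}$ with $R_{\tilde w}$ depending continuously on finitely many $C^k$-seminorms of the weights. The paper, like you, isolates this as the step ``requiring real work'' and justifies it by noting that each stage of the FIO construction uses only finitely many symbol estimates. Where you diverge is in upgrading the uniform lossy estimate to the clean one \r{1.18b}. The paper does not argue by contradiction and compactness; instead it follows the finite-rank correction scheme of \cite{SU-JAMS}: writing $E_wQ_w\mathcal{I}_w=\Id+R_w$ as in \r{1.22}, it lets $V=\Ker(\Id+R_w)$ on $L^2(K)$ (finite-dimensional), observes that $\mathcal{I}_w|_V$ is an isomorphism onto its image with inverse $B_w$, and corrects the parametrix by the fixed finite-rank piece $B_w\Pi_w$. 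One checks directly that $\Id+R^\sharp_w$ is injective, hence invertible, and continuous dependence of $R^\sharp_{\tilde w}$ propagates this to nearby $\tilde w$ with a uniform bound. This route is more constructive --- it yields an explicit approximate left inverse of $\mathcal{I}_{\tilde w}$ --- while your compactness/Rellich argument is a perfectly standard and somewhat more elementary way to pass from ``Fredholm plus injective at the center'' to ``uniformly stable nearby.'' One small correction: your claim $\|\mathcal{I}_{\tilde w^{(n)}}-\mathcal{I}_w\|_{H_0^s(K)\to H^{s+3/2}(Z)}\to0$ overshoots, since by Proposition~\ref{pr_I} the map only lands in $H^{s+1/2}$; but this is harmless, as $\mathcal{I}_w f_n\to0$ in $H^{s+1/2}$ together with $f_n\rightharpoonup f$ in $H^s$ already forces $\mathcal{I}_w f=0$.
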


\begin{proof}
By  \cite{Hormander_actaII}, see also  \cite[Theorem~26.1.14]{Hormander4}, under the assumptions of the theorem one can construct a parametrix $E$ so that
\[
EP=\Id +R,
\]
where $R$ has a smooth kernel. The parametrix $E$ is not unique, even modulo smoothing operators. Loosely speaking, it is unique modulo smoothing operators if we fix an orientation on each connected set of bicharacteristics through $K$. The operator $E$ has the mapping property $E:H^s_0\to H^{s-1}$. If we make $P$ of order $1$, then $P$ would be microlocally equivalent to $\partial/\partial{x_1}$; and then roughly speaking, $E$ is integration w.r.t.\ $x_1$ in that representation in the direction of the chosen orientation.

By \r{1.12}, we have
\be{1.22}
EQ\mathcal{I}=\Id+R,
\ee
where $Q$ is of order $1/2$. Notice that $E$ is of order $1$, and $\mathcal{I}$ is of order $-1/2$. While the composition $EQ\mathcal{I}$ a priori is of order $1$ just based on the individual terms, it is actually of order $0$ as \r{1.22} shows. 

The construction of the Fourier Integral Operator (FIO) $E$ is described in \cite{Hormander4}. In order to get $R$ above to be just of order $-1$, all microlocal constructions need to be done up to finite order only in order to satisfy finitely many symbol estimates, see, e.g., \cite[Theorem~18.1.11']{Hormander3} and  \cite{SU-IBSP}.   In each step, finitely many derivatives of the symbols are needed; therefore, finitely many derivatives of $w_1$ and $w_2$ are needed.  Therefore, for some $k$, $C^k\ni (w_1,w_2)\to R$ is continuous, where $R: H^s\to H^{s+1}$ for a fixed $s$. 

The arguments below follow the proof of \cite[Proposition~5.1]{SU-JAMS}. The idea is to correct the parametrix $EQ$ by a finite rank operator so that  the new $\Id+R$ would be injective. We should be able to do this because $\mathcal{I}$ is injective.

Restrict equation \r{1.22} to $K$. In this stage of the proof, we will indicate the dependence on $w := (w_1,w_1)$ by a subscript $w$. 
We can always assume that $R_w$ is self-adjoint because we can apply $\Id+R_w^*$ to both sides of \r{1.22}.    
The operator $\Id+R_w$ has at most a finite-dimensional kernel $V$ on $L^2(K)$. Since $\mathcal{I}_w$ is injective on $L^2(K)$, $\mathcal{I}_w:V\to \mathcal{I}_wV$ is an isomorphism; let $B_w$ be its inverse. Let also $\Pi_{ w}$ be the orthogonal projection to $\mathcal{I}_{ w} V$. 
For $\tilde w$ close to $w$ as in the theorem, set $B^\sharp_{\tilde w}:= E_{\tilde w}Q_{\tilde w}+ B_w\Pi_{w} $. Then
\be{1.22est}
B^\sharp_{\tilde w}\mathcal{I}_{\tilde w}\f = (\Id + R^\sharp_{\tilde w})\f,
\ee
where $R^\sharp_{\tilde w}:= R_{\tilde w} +B_w\Pi_{w} \mathcal{I}_{\tilde w}$ is compact. We claim that $\Id + R^\sharp_{\tilde w}$ is injective for $\tilde w=w$. Indeed, assume $(\Id + R^\sharp_{ w})\f=0$. Then $(\Id +  R_{w})\f +B_w\Pi_{w} \mathcal{I}_{w}\f=0$. The first term is in $V^\perp$; the second one --- in $V$, therefore they are both zero. Thus $\f\in V$, and $B_w\Pi_{w} \mathcal{I}_{w}\f=0$. By the definition of $B_w$ and $\Pi_{w}$, this implies $g=0$. Therefore, $\Id + R^\sharp_{w}$ is injective, and actually invertible in $L^2(K)$.  This property is preserved under small $C^k$ perturbations of $w$, $k\gg1$,  as discussed above, with a uniformly bounded norm. The statement of the theorem now follows directly from \r{1.22est}.
\end{proof}

Theorem~\ref{thm_pert} and Theorem~\ref{thm_w_an} imply the following generic uniqueness result.
\begin{corollary}
Let $K$ and $\Omega$ be as in Theorem~\ref{thm_pert}. 
For some $k\gg1$, there is an open dense set of pairs $(w_1,w_2)$ in $C^k(\Omega)$ so that the corresponding operator $\mathcal{I}$ is injective on $\mathcal{E}'(\Omega)$,  and satisfies the stability estimate \r{1.18b} with a locally uniform constant. 
\end{corollary}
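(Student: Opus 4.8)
The plan is to combine the two previous results in a standard Baire-category/perturbation argument. First I would use Theorem~\ref{thm_w_an}: since $K$ is non-trapping and contained in the pseudo-convex open set $\Omega$, choosing any pair $(w_1,w_2)$ that is real-analytic in $\Omega\times S^1$ produces an operator $\mathcal{I}$ that is injective on $\mathcal{E}'(K)$, and hence (by Theorem~\ref{thm_lin}(c), or directly by Theorem~\ref{thm_pert}) satisfies the stability estimate \r{1.18b}. This shows the set of ``good'' pairs is \emph{nonempty}; indeed, since real-analytic functions are dense in $C^k(\Omega)$ for every finite $k$, the good set is already dense. It remains to show it is \emph{open} (in the appropriate $C^k$ topology), and that one can pass from injectivity/stability on $\mathcal{E}'(K)$ to injectivity on $\mathcal{E}'(\Omega)$.

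The openness is exactly the content of Theorem~\ref{thm_pert}: if $(w_1,w_2)$ is such that $\mathcal{I}$ is injective on $\mathcal{E}'(K)$, then there exist $k>0$ and $\eps>0$ so that every $(\tilde w_1,\tilde w_2)$ that is $\eps$-close in $C^k(\bar\Omega)$ gives an injective $\tilde{\mathcal{I}}$ with a uniform constant in \r{1.18b}. So for the fixed $k$ coming from Theorem~\ref{thm_pert}, the set of pairs with $\mathcal{I}$ injective on $\mathcal{E}'(K)$ and satisfying \r{1.18b} with a locally uniform constant is open and dense in $C^k(\Omega)$. (A minor point: the value of $k$ a priori depends on the base pair; one takes $k$ large enough, exploiting that analytic weights sit in $C^k$ for all $k$, and that the parametrix construction in the proof of Theorem~\ref{thm_pert} only uses finitely many derivatives.)

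The last issue is the discrepancy between ``injective on $\mathcal{E}'(K)$'' and ``injective on $\mathcal{E}'(\Omega)$'' in the statement. I would handle this by exhausting: write $\Omega=\bigcup_j K_j$ with $K_j$ an increasing sequence of non-trapping compact sets (each admitting a pseudo-convex neighborhood inside $\Omega$, which is possible since $\Omega$ is itself pseudo-convex), apply the open-dense result to each $K_j$ to get an open dense set $U_j\subset C^k(\Omega)$ of pairs for which $\mathcal{I}$ is injective on $\mathcal{E}'(K_j)$ with a locally uniform stability constant, and then take the countable intersection $\bigcap_j U_j$, which is dense by the Baire category theorem in the Fréchet (or Banach, for fixed $k$) space $C^k(\Omega)$. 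Any pair in this intersection gives $\mathcal{I}$ injective on every $\mathcal{E}'(K_j)$, hence on $\mathcal{E}'(\Omega)=\bigcup_j\mathcal{E}'(K_j)$, with the stability estimate \r{1.18b} on each $K_j$; this is the asserted ``locally uniform constant''. If one only wants an \emph{open dense} set (not merely residual), one works with a single $K$ as in the hypotheses and states the conclusion with $K$ in place of $\Omega$, which is the reading consistent with Theorem~\ref{thm_pert}.

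The main obstacle is bookkeeping rather than conceptual: making sure that a single exponent $k$ works simultaneously, that the ``locally uniform constant'' claim survives the countable intersection (it does, since on each $K_j$ the constant is uniform on a $C^k$-neighborhood), and that the non-trapping property is genuinely open under enlarging $K$—this last point is the one cited from the proof of \cite[Theorem~26.1.7]{Hormander4} and used already in the discussion preceding Theorem~\ref{thm_pert}, so it may be invoked directly.
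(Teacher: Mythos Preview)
Your approach is essentially the paper's own: the paper gives no proof beyond the sentence ``Theorem~\ref{thm_pert} and Theorem~\ref{thm_w_an} imply the following generic uniqueness result,'' and you have correctly unpacked that sentence (density from analytic approximation plus Theorem~\ref{thm_w_an}, openness from Theorem~\ref{thm_pert}). You are also more careful than the paper about the $\mathcal{E}'(K)$ versus $\mathcal{E}'(\Omega)$ discrepancy; the paper does not address it, and your reading that the intended conclusion is really on $\mathcal{E}'(K)$ (with the exhaustion/Baire refinement as an optional strengthening yielding a residual rather than open set) is the right way to reconcile the statement with the tools available.
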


\section{The non-linear Identification Problem} \label{sec_non_lin}
Let $(a,f)$ and $(\tilde a, \tilde f)$ be two attenuation-source pairs. We will denote functions and operators related to $(\tilde a, \tilde f)$ by placing a tilde over them.  
The difference $v :=\tilde  u- u$ of the solutions of \r{pr.1} solves
\be{nl.1}
(\theta\cdot\partial_x+\tilde a)v= \delta f -   u\delta a,\quad v|_{\theta\cdot x\ll1}=0,
\ee
where
\be{nl.2}
 \delta a :=\tilde a- a, \quad \delta f:=\tilde f-  f.
\ee
Therefore,
\be{nl.3}
\tilde X_a\tilde f - X_af = I_w\delta a+ X_{\tilde a}\delta f,
\ee
where $I_w$ is the weighted X-ray transform with weight
\be{nl.4}
w  = -e^{-B\tilde a} u.
\ee
We used here the obvious generalization of \r{pr.2} for sources $f$ dependent on $\theta$ as well, see the remark following \r{2a}. If we replace $\tilde a$ on the right with $a$, then we get the linearization formula of Proposition~\ref{pr1}, as we should. 

\subsection{A summary of the properties of the linearization $\delta X_{a,f}$} 
We are in the situation of the previous section with 
\be{nl.5}
\f_1= \delta a, \quad \f_2 = \delta f, \quad w_1 =-e^{-B\tilde a} u , \quad w_2 = e^{-B\tilde a}.
\ee
If $\delta a$, $\delta f$ are given by \r{nl.2}, then \r{1.7eq} is a non-linear equation, of course. If we treat them as independent (of $a$, $\tilde a$, $f$, $\tilde f$)  functions then we have the linear problem that we analyzed above. Then 
\be{nl.5a}
W= e^{-B\tilde a}e^{-BJ\tilde a} W_0, \quad  W_0 := ( u-J u), 
\ee
see \r{W1}. The characteristic variety $\Sigma$ in this case is given by
\be{nl.6}
\Sigma = \left\{ u(x,\xi^\perp/|\xi|) =  u(x,-\xi^\perp/|\xi|)       \right\}.
\ee 
The Hamiltonian $p_0$ is then given by \r{1.13}. Since an elliptic factor does not change the zero bicharacteritics, just their parameterization, the zero bicharacteristics are then given by the following Hamiltonian
\be{nl.7}
H(x,\xi) =  \left(  u(x,\theta) -  u(x,-\theta)\right)\big|_{\theta=\xi^\perp/|\xi| }.
\ee
Recall that $ u$ us the solution of \r{pr.1}.

\begin{figure}[h] 
  \centering
  \includegraphics[bb=51 361 473 600,width=3.07in,height=1.73in,keepaspectratio]{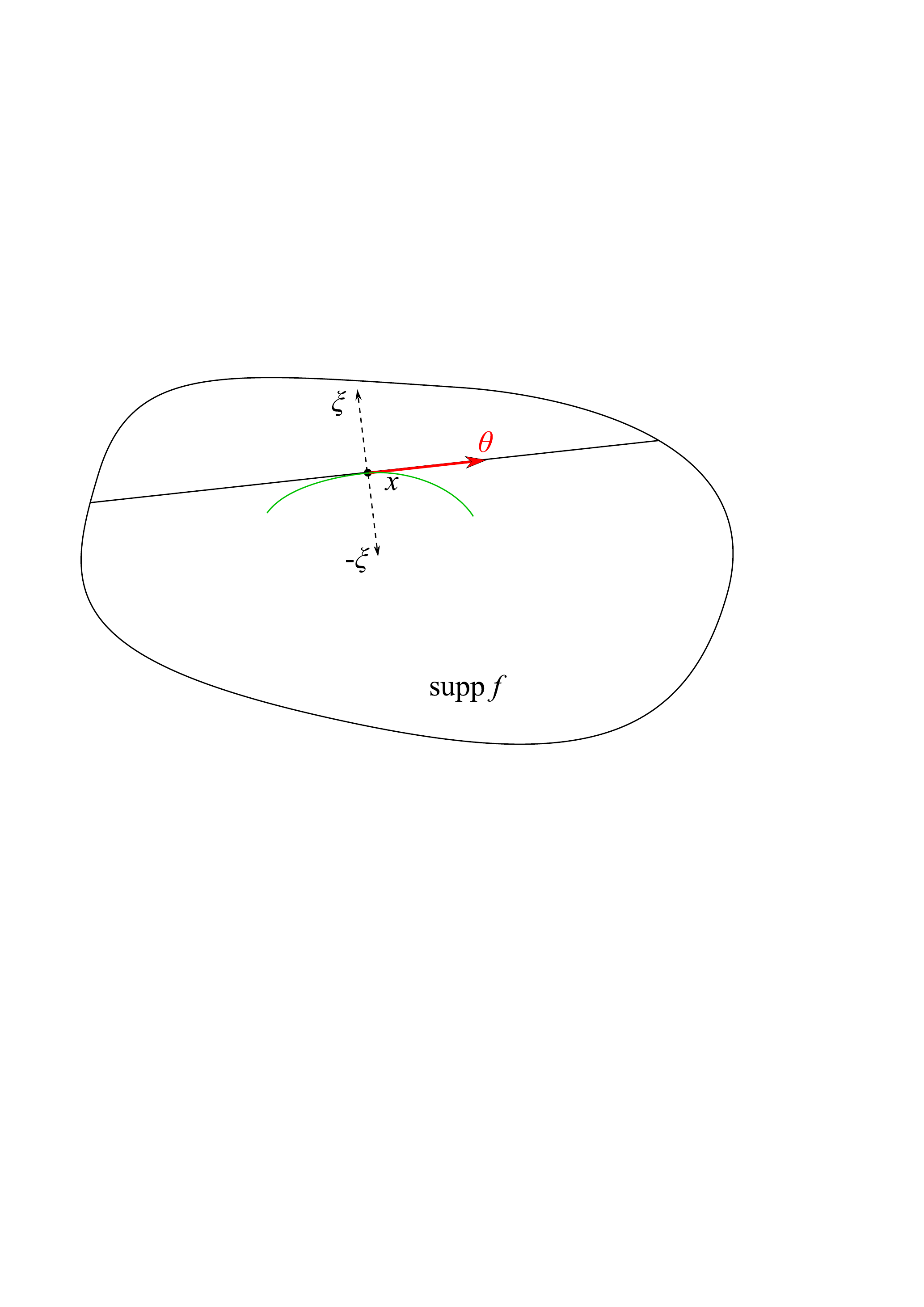}
  \caption{The zeros $(x,\theta)$ of $W$ are characterized by the property that the attenuated integrals of $f$ from $x$ in the directions $\theta$ and $-\theta$ are equal. The conormals $\xi$ to such $\theta$  are the characteristic ones. If $\partial_{\theta^\perp}W_0(x,\theta)\not=0$, then there is a smooth ray through  $x$ tangent to $\theta$.}
  \label{fig:fig_identification1}
\end{figure}

We will summarize the properties of the rays, see Definition~\ref{def_nt}, in this case. Let $\mathcal{I}$ be the linear operator defined in \r{1.5} with weights $w_1$ and $w_2$ as in \r{nl.5} but $\f_1 = \delta f$ and $\f_2 = \delta a$ considered as independent functions. 
Notice that the rays depend on $a$ and $f$ only. On the other hand, $\tilde a$, $\tilde f$ affect the weights in $\mathcal{I}$.
\begin{itemize}
  \item For any $x$ there is at least one ray through it which might be a point. 
  \item The rays may not be smooth. Given $(x,\theta)\in \R^n\times S^1$, there is a smooth ray trough $x$ in the direction of $\theta$ if and only if $W_0(x,\theta)=0$ and $\partial_{\theta^\perp}W_0(x,\theta)\not=0$. 
  \item Since $\theta\cdot\partial_xW_0=2f$, the condition $f(x)\not=0$ is  sufficient for $P$ to be of real principal type at $(x,\xi)$.
  \item A compact set $K\subset \R^2$ is called non-trapping, if all rays eventually leave $K$. 
  \item If $K$ is non-trapping, then $\mathcal{I}$, restricted to $K$, has a finite dimensional kernel, smooth enough if $B\tilde a$ and  $u$  are smooth enough near $K$. Also, \r{1.18a} holds. 
  \item If $\mathcal{I}$ is injective on $K$, then it is stable, as well, with a loss of one derivative, i.e., \r{1.18b} holds. If in addition $K$ has a pseudo-convex neighborhood, then the injectivity is preserved under a small enough perturbation with a uniform stability estimate  \r{1.18b}.
  \item If $W_0(x_0,\theta)=0$ implies $\partial_{\theta^\perp}W_0(x_0,\theta)\not=0$ for all $\theta$, then $\mathcal{I}$ is injective (and stable) restricted to functions supported in  some neighborhood of $x_0$. 
  \item  If $K$ is non-trapping, and $B\tilde a$ and  $u$ are analytic in a neighborhood of $K$, then $\mathcal{I}$ is injective (and stable).
\end{itemize}  

\begin{remark}\label{rem_support}
One important improvement in this case is to use the ellipticity of the second term $X_a$ in $\delta X_{a,f}$, see Proposition~\ref{pr1}. Let $\supp\delta a\subset K_1$, $\supp \delta f\subset K_2$, with $K_{1,2}$ compact sets. Then $\delta X_{a,f}$ is elliptic on the set $K_2\setminus K_1$ because there (i.e., for $\delta a$, $\delta f$ supported there), $\delta X_{a,f}(\delta a,\delta f) = X_a\delta f$. Therefore, for recovery of singularities we only need $K_1$ to be non-trapping. If, in addition,  $u\not=0$ on $\overline{K_1\setminus  K_2}$,  see \r{pr.1},  then it is enough to ask $K_1\cap K_2$ to be non-trapping. 
\end{remark}

\subsection{Uniqueness and stability results}
Our first main result about the identification problem is the following theorem. Recall that the requirement on $\Omega$ to be pseudo-convex implies that $K$ is non-trapping. 
\begin{theorem}\label{thm_local}
Let $K\subset\R^2$ be a compact set and let $\Omega\supset K$ be open. 
Let $a_0$, $f_0$ be  of compact support so that their beam transforms $Ba_0$ and $Bf_0$ are  in $C^k(\Omega\times S^1)$. Assume that $\Omega$ is pseudoconvex w.r.t.\ the   Hamiltonian $H$ defined in \r{nl.7}, related to $a_0$ and $f_0$. Let $a_0$, $f_0$ be such that $\delta X_{a_0,f_0}$, see Proposition~\ref{pr1}, is injective on $K$. Then if  $k\gg1$,    
there exists $\eps>0$ so that for any $(a_1,f_1)$, $(a_2,f_2)$ with $a_j-a_0\in C^k$ and $f_j-f_0\in C^k$ supported in $K$ satisfying 
\be{nl.7a}
\|B(a_j-a_0)\|_{C^k(\bar \Omega\times S^1)} + \|B(f_j-f_0)\|_{C^k(\bar \Omega\times S^1)}\le \eps,\quad j=1,2,
\ee
there exist constants $C>0$, $\mu\in (0,1)$ so that
\be{stab}
\|a_1-a_2\|_{L^2(K)}+\|f_1-f_2\|_{L^2(K)} \le C\|X_{a_1}f_1- X_{a_1}f_1\|_{H^{1/2}(Z)}^\mu.
\ee
\end{theorem}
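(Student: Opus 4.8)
The plan is to reduce the nonlinear estimate \r{stab} to the linear stability estimate \r{1.18b} for the operator $\mathcal{I}$ of Section~\ref{sec_lin2}, applied to the weights \r{nl.5} associated with the pair $(a_2, f_2)$ (playing the role of the ``tilde'' pair). The starting point is the exact identity \r{nl.3}: writing $\delta a = a_1 - a_2$, $\delta f = f_1 - f_2$, and taking $(\tilde a, \tilde f) = (a_2, f_2)$, we have
\[
X_{a_1}f_1 - X_{a_2}f_2 = I_{w_1}\delta a + X_{a_2}\delta f = \mathcal{I}_{(a_2,f_2)}(\delta a, \delta f),
\]
with $w_1 = -e^{-Ba_2}u_2$, $w_2 = e^{-Ba_2}$, where $u_2$ solves the transport equation \r{pr.1} with attenuation $a_2$ and source $f_2$. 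Note $\supp(\delta a,\delta f)\subset K$.

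First I would establish that the linear machinery applies to the perturbed weights. The Hamiltonian $H$ in \r{nl.7} is built from $u_0$ (the solution of \r{pr.1} for $(a_0,f_0)$), and by hypothesis $\Omega$ is pseudo-convex for it and $\delta X_{a_0,f_0}$ is injective on $K$; by Theorem~\ref{thm_pert}, injectivity and the uniform estimate \r{1.18b} persist for all weights $C^k(\bar\Omega)$-close to those of $(a_0,f_0)$. The smallness assumption \r{nl.7a} on $\|B(a_j - a_0)\|_{C^k}$ and $\|B(f_j - f_0)\|_{C^k}$ is exactly what controls $\|w_j^{(2)} - w_j^{(0)}\|_{C^k(\bar\Omega\times S^1)}$: indeed $w_2 = e^{-Ba_2}$ depends smoothly on $Ba_2$, and $w_1 = -e^{-Ba_2}u_2$ with $u_2 = -e^{-Ba_2}(\text{terms involving } Ba_2 \text{ and } f_2)$ via \r{pr.2}/\r{w2'}, so a few derivatives of $Ba_2$ and $Bf_2$ bound a few derivatives of $w_1,w_2$ on $\bar\Omega$; for $k$ large enough we stay inside the perturbation ball of Theorem~\ref{thm_pert}, getting
\[
\|(\delta a,\delta f)\|_{L^2(K)} \le C\,\|\mathcal{I}_{(a_2,f_2)}(\delta a,\delta f)\|_{H^{3/2}(Z)} = C\,\|X_{a_1}f_1 - X_{a_2}f_2\|_{H^{3/2}(Z)},
\]
with $C$ uniform in the admissible pairs. (Strictly, one should absorb the finite-dimensional kernel: since $\delta X_{a_0,f_0}$ is injective on $K$, $\Ker\mathcal{I}=0$ for the nearby weights too, so no projection is needed.)

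To pass from the $H^{3/2}(Z)$ norm on the right to the weaker $H^{1/2}(Z)$ norm demanded in \r{stab}, I would interpolate. The data $X_{a_1}f_1 - X_{a_2}f_2$ is a priori bounded in $H^{3/2}(Z)$ by a constant depending only on $K$ and on uniform $C^k$ bounds for the pairs (this follows from the continuity of $(a,f)\mapsto X_af$ in suitable norms and the compact support in $K$), and the same expression is controlled in $H^{1/2}(Z)$ by the quantity on the right of \r{stab}. Using the standard Sobolev interpolation inequality $\|h\|_{H^{3/2}} \le \|h\|_{H^{1/2}}^{\mu}\|h\|_{H^{5/2}}^{1-\mu}$ for an appropriate $\mu\in(0,1)$ — or rather $\|h\|_{H^{3/2}}\le\|h\|_{H^{1/2}}^\mu\,\|h\|_{H^s}^{1-\mu}$ with $s$ chosen so the high-norm factor is the one we can bound uniformly — and feeding in the uniform a priori bound, we obtain
\[
\|(\delta a,\delta f)\|_{L^2(K)} \le C\,\|X_{a_1}f_1 - X_{a_2}f_2\|_{H^{1/2}(Z)}^{\mu},
\]
which is \r{stab}. (The exponent $\mu$ comes out explicitly from the interpolation, e.g. $\mu = 1/2$ if one interpolates $H^{3/2}$ between $H^{1/2}$ and $H^{5/2}$.)

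The main obstacle is the bookkeeping in the second step: one must check carefully that the smallness of the \emph{beam transforms} $B(a_j-a_0)$, $B(f_j-f_0)$ in $C^k(\bar\Omega\times S^1)$ — rather than of $a_j-a_0$, $f_j-f_0$ themselves — genuinely controls the weights $w_1,w_2$ from \r{nl.5} in the $C^k$ topology on $\bar\Omega\times S^1$ uniformly, and that the perturbed Hamiltonian flows remain non-trapping with $\Omega$ pseudo-convex. The first is a direct consequence of the formulas \r{pr.2}, \r{w2'}, \r{nl.5} (the exponential and the integral in $t$ over the compact support are smooth operations of $Ba_2$ and $Bf_2$); the second follows because the pseudo-convexity and non-trapping conditions are stable under $C^k$ perturbations of the symbol, as used already in the proof of Theorem~\ref{thm_pert} and noted before Definition~\ref{def_nt}. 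Everything else is assembling \r{nl.3}, the uniform linear estimate \r{1.18b}, an a priori $H^{3/2}(Z)$ bound on the data, and interpolation.
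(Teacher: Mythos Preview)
Your approach is correct and in fact more direct than the paper's. The key difference: the paper writes $X_{a_2}f_2 - X_{a_1}f_1 = \delta X_{a_1,f_1}(\delta a,\delta f) + R$ with a genuine quadratic remainder $R$ (see \r{nl.7b}, \r{nl.9}), and then invokes an abstract nonlinear stability result (\cite[Theorem~2]{SU-JFA}) that converts a linear estimate plus a quadratic remainder bound plus interpolation into a H\"older estimate, working in $L^\infty$. You instead exploit that \r{nl.3} is already an \emph{exact} identity: the nonlinear difference equals a linear operator $\mathcal{I}$, with weights depending on \emph{both} pairs, applied to $(\delta a,\delta f)$; hence no remainder is needed and Theorem~\ref{thm_pert} plus ordinary Sobolev interpolation suffice. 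Your route is more elementary and avoids the external citation; the paper's route packages the nonlinear-to-linear passage into a single reference.

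Two small corrections to your bookkeeping. First, with $(\tilde a,\tilde f)=(a_2,f_2)$ and $(a,f)=(a_1,f_1)$ in \r{nl.3}--\r{nl.4}, one gets $\delta a=a_2-a_1$ and $w_1=-e^{-Ba_2}u_1$, not $-e^{-Ba_2}u_2$: the weight mixes the beam transform of one pair with the transport solution of the other. This is harmless, since both ingredients are $O(\eps)$-close in $C^k$ to their $(a_0,f_0)$ counterparts, which is all Theorem~\ref{thm_pert} needs. Second, your interpolation step requires a \emph{uniform} a priori bound on $\|X_{a_1}f_1-X_{a_2}f_2\|_{H^s(Z)}$ for some fixed $s>3/2$. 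This is available: since $\theta\cdot\partial_x B\psi=-\psi$, the smallness assumption \r{nl.7a} on $B(a_j-a_0)$, $B(f_j-f_0)$ yields a uniform $C^{k-1}$ bound on $a_j-a_0$, $f_j-f_0$ themselves, hence on $(\delta a,\delta f)$, and then Proposition~\ref{pr_I} controls $\mathcal{I}(\delta a,\delta f)$ in $H^s(Z)$ for $s\le k-1$.
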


\begin{proof} 
By \r{nl.3}, we have
\be{nl.8}
\begin{split}
X_{a_2}f_2- X_{a_1}f_1 &= -I_{e^{-Ba_1}u_1}\delta a + X_{a_1}\delta f +R\\
 &= \delta X_{a_1,f_1}(\delta a, \delta f)+R,
\end{split}
\ee
where 
\be{nl.7b}
R =  I_{(e^{-B a_1}-e^{-Ba_2})u_1} \delta a + X_{a_2-a_1}\delta f,
\ee
and $\delta a= a_2-a_1$, $\delta f = f_2-f_1$. 
Next, 
\be{nl.9}
\begin{split}
\|R\|_{L^\infty}&\le C \|B(a_2-a_1)\|_{L^\infty(K)} \|a_2-a_1\|_{L^\infty(K)} + C\|B(a_2-a_1)\|_{L^\infty(K)} \|f_2-f_1\|_{L^\infty(K)}\\
  &\le C'\left(\|\delta a\|_{L^\infty(K)}^2 + \|\delta f\|_{L^\infty(K)}^2\right),
\end{split}
\ee
where $C'$ depends on an a priori bound of $\|f_1\|_{L^\infty(\Omega)}$ which can always be found depending on $a_0$, $f_0$, $\eps$; by \r{stab}. 

We will apply \cite[Theorem~2]{SU-JFA}. Set $\mathcal{A}(a,f)=X_{a}f$. Set also  $\mathcal{B}_1=L^\infty(K)\times L^\infty(K)$, $\mathcal{B}_2=L^\infty(Z)$. Then $ \mathcal{A} :\mathcal{B}_1\to\mathcal{B}_2$ is continuous. 
By \r{nl.8} and \r{nl.9}, $\mathcal{A}$ is differentiable at $(a_1,f_1)$  with a quadratic estimate of the remainder. 

By assumption, $Ba_0|_{\bar \Omega\times S^1}$ and $Bf_0|_{\bar \Omega\times S^1}$ are in $C^k$. The same holds for the solution $u_0$ of \r{pr.1} related to $a_0$, $f_0$. 
Since $a_j-a_0\in C_0^k(K)$ and $f_j-f_0\in C^k_0(K)$, we also get the same for $Ba_j$, $Bf_j$, and $u_j$, $j=1,2$. Moreover, by \r{nl.7a},  $Ba_j$, $Bf_j$, and $u_j$, $j=1,2$ are $O(\eps)$ perturbations of  $Ba_0$, $Bf_0$, and $u_0$ in $C^k(\bar \Omega\times S^1)$. For $k\gg1$, we apply Theorem~\ref{thm_pert} to conclude that $\delta X_{a_1,f_1}$ is still injective, satisfying a stability estimate \r{1.18b} with a constant $C$ independent of $a_1$, $f_1$. Take $s>1$ in \r{1.18b}, for example, $s=3/2$,  to get
\[
\|\delta a\|_{L^\infty(K)}+ \|\delta f\|_{L^\infty(K)} \le C\|\delta X_{a_1,f_1}(\delta a,\delta f)\|_{H^{3}(Z)}.
\]
Based on that, we set
\[
\mathcal{B}_1'= \mathcal{B}_1=L^\infty(K), \quad \mathcal{B}_2'=H^{3}(Z).
\]
Then we have the following interpolation  estimate 
\[
\|h\|_{\mathcal{B}_2'}\le C \|h\|_{L^2}^\frac12 \|h\|_{H^{6}}^\frac12\le 
C' \|h\|_{\mathcal{B}_2}^\frac12 \|h\|_{\mathcal{B}_2''}^\frac12,
\]
where $\mathcal{B}_2''= H^{6}(Z)$. We have all conditions met to apply \cite[Theorem~2]{SU-JFA}. We therefore get that of $k\gg6$ ($k$ needs to satisfy both $k\ge6$ and the requirements of Theorem~\ref{thm_pert}), the stability estimate \r{stab} holds with $\mu=1/2$. 
\end{proof}

\begin{remark}
It is enough to assume that $a_j$ and $f_j$, $j=0,1,2$, satisfy the regularity  assumptions, instead of $Ba_j$, $Bf_j$ but that would be more restrictive. 
\end{remark}

\begin{remark}
The support conditions for $a_j-a_0$ and $f_j-f_0$ can be relaxed to some extent as in Remark~\ref{rem_support}.
\end{remark}

\begin{remark}
The value for $\mu$ that we got is $\mu=1/2$ but that was based on specific, and a bit arbitrary choice of the interpolation space $H^6$.  
As shown in \cite[Theorem~2]{SU-JFA}, and as can  be easily seen from the proof, we can choose any $\mu>1$ in \r{stab}, as close to $1$ as we wish, at the expense of increasing $k$. 
\end{remark}

Next corollary is a generic local uniqueness and stability result on non-trapping sets.

\begin{corollary} \label{cor_non1}
Let $K\subset\R^2$ be a compact set and $\Omega\supset K$ be open. 
Let $a_0$, $f_0$ be  of compact support so that their beam transforms $Ba_0$ and $Bf_0$ are analytic in $\Omega\times S^1$. Assume that $\Omega$ is pseudoconvex w.r.t.\ the   Hamiltonian $H$ defined in \r{nl.7}, related to $a_0$ and $f_0$. Then the conclusions of Theorem~\ref{thm_local} hold. 
\end{corollary}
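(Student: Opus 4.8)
The plan is to deduce Corollary~\ref{cor_non1} from Theorem~\ref{thm_local} by showing that the analyticity hypothesis implies the injectivity hypothesis needed to invoke Theorem~\ref{thm_local}, so that the only thing requiring proof is that $\delta X_{a_0,f_0}$ is injective on $\mathcal{E}'(K)$. Once that is in hand, all the remaining hypotheses of Theorem~\ref{thm_local} are literally among the hypotheses of the corollary (compact supports, pseudoconvexity of $\Omega$ with respect to $H$, and the regularity of $Ba_0$, $Bf_0$, which is now \emph{analytic} hence in particular $C^k$ for every $k$), so the conclusion follows verbatim. First I would recall that, by Proposition~\ref{pr1}, $\delta X_{a_0,f_0}(\delta a,\delta f)=I_w\delta a+X_{a_0}\delta f = \mathcal{I}(\delta a,\delta f)$ with the weight pair $w_1 = -e^{-Ba_0}u_0$, $w_2 = e^{-Ba_0}$ as in \r{nl.5} (with $\tilde a = a_0$). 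Since $Ba_0$ is analytic in $\Omega\times S^1$ and $u_0$ solves the transport equation \r{pr.1} with analytic coefficients and right-hand side, $u_0$ is analytic in $\Omega\times S^1$ as well; hence $w_1$ and $w_2$ are analytic in $\Omega\times S^1$.

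Next I would invoke Theorem~\ref{thm_w_an}: its hypotheses are exactly that $w_1$, $w_2$ are analytic in $\Omega\times S^1$ with $\Omega$ an open set containing the non-trapping compact $K$. The pseudoconvexity of $\Omega$ with respect to $H$ (equivalently, with respect to $p_0$, since $H$ and $p_0$ have the same zero bicharacteristics up to reparameterization by the elliptic factor in \r{nl.5a}) implies in particular that every compact subset of $\Omega$ is non-trapping, so $K$ is non-trapping; thus Theorem~\ref{thm_w_an} applies and yields that $\mathcal{I} = \delta X_{a_0,f_0}$, restricted to $\mathcal{E}'(K)$, is injective. This is precisely the standing injectivity hypothesis of Theorem~\ref{thm_local}.

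With injectivity of $\delta X_{a_0,f_0}$ on $K$ established, all hypotheses of Theorem~\ref{thm_local} are satisfied (taking any $k\gg1$ as there, permissible since $Ba_0$, $Bf_0$ being analytic are in $C^k(\Omega\times S^1)$ for all $k$), and its conclusion --- the existence of $\eps>0$ such that for all pairs $(a_1,f_1)$, $(a_2,f_2)$ satisfying the closeness condition \r{nl.7a} the H\"older stability estimate \r{stab} holds --- is exactly the assertion of the corollary. I do not expect a genuine obstacle here; the only point requiring a word of care is the propagation of the analyticity from $Ba_0$ through the transport equation to $u_0$ and then to the weights, and the observation that the elliptic factor $e^{-B\tilde a}e^{-BJ\tilde a}$ relating $W$ to $W_0$ does not affect which sets are non-trapping, so that pseudoconvexity stated for $H$ transfers to $p_0$. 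Both are routine, so the proof is essentially a short deduction chaining Proposition~\ref{pr1}, Theorem~\ref{thm_w_an}, and Theorem~\ref{thm_local}.
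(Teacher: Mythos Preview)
Your proposal is correct and follows essentially the same approach as the paper, which dispatches the corollary in a single line by noting that ``the proof follows immediately from Theorem~\ref{thm_w_an}.'' You have correctly identified that the analyticity of $Ba_0$, $Bf_0$ yields analytic weights $w_1,w_2$, so Theorem~\ref{thm_w_an} supplies the injectivity hypothesis of Theorem~\ref{thm_local}, after which the latter applies directly.
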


This corollary implies in a trivial way also local uniqueness, in $K$, near a generic (dense and open in $C^k$, $k\gg1$) set of $(a,f)$. The proof follows immediately from Theorem~\ref{thm_w_an}.

The second corollary below states local uniqueness and stability in a small enough non-trapping set. 
\begin{corollary} \label{orthm_small_d} 
Let $x_0\in R^2$ is such that $Ba$, $Bf$ are smooth near $x_0$, and $W_0$ satisfies \r{1.21}. Then there exists an open set $U\ni x_0$, so that for any $K\subset U$ the conclusions of Theorem~\ref{thm_local} hold. 
\end{corollary}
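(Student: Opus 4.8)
The plan is to reduce Corollary~\ref{orthm_small_d} to Theorem~\ref{thm_local} by exhibiting, for a suitable neighborhood $U$ of $x_0$, a pseudo-convex open set $\Omega$ and checking that $\delta X_{a_0,f_0}$ is injective on any compact $K\subset U$. The key observation is that condition \r{1.21} applied to $W_0$ (here $x_0$ plays the role of the fixed point) gives exactly the hypothesis of Corollary~\ref{cor_nt}, once we recall from \r{nl.5a} that $W$ and $W_0$ differ only by the nonvanishing elliptic factor $e^{-B\tilde a}e^{-BJ\tilde a}$, so that $W(x_0,\theta)=0 \Leftrightarrow W_0(x_0,\theta)=0$ and, on that zero set, $\partial_{\theta^\perp}W(x_0,\theta)\not=0 \Leftrightarrow \partial_{\theta^\perp}W_0(x_0,\theta)\not=0$. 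Hence Corollary~\ref{cor_nt} furnishes an $\eps_0>0$ so that $\mathcal{I}$, and therefore $\delta X_{a_0,f_0}$, is injective on $\mathcal{E}'(B(x_0,\eps))$ for every $0<\eps\le\eps_0$, together with the stability estimate \r{1.18b}.

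Next I would arrange the pseudo-convexity. Condition \r{1.21} at $x_0$ means that every ray through $x_0$ is smooth there and there are only finitely many such directions; by continuity of $W_0$ and $\partial_{\theta^\perp}W_0$ (valid since $Ba_0$, $Bf_0$, hence $u_0$, are smooth near $x_0$), the same implication \r{1.21} holds at every point of a small ball $B(x_0,2r)$. Taking $\Omega=B(x_0,2r)$ with $r$ small enough, every compact subset of $\Omega$ is non-trapping (a trapped ray would have to return arbitrarily close to where it started, contradicting that all rays locally cross $x_0$'s neighborhood transversally in finitely many directions — more carefully, one uses that the Hamiltonian vector field is nonvanishing and roughly constant on a small ball, so bicharacteristic segments are nearly straight and exit), and moreover $\Omega$ is convex with respect to the bicharacteristics for $r$ small, since bicharacteristic intervals with endpoints in a compact $K_1\subset\Omega$ stay in a slightly larger compact subset of $\Omega$; thus one may take $K_2=K_1$ in a convex sub-ball, giving pseudo-convexity. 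Setting $U=B(x_0,\min(r,\eps_0))$, any compact $K\subset U$ then satisfies: $K$ is non-trapping, $\Omega\supset K$ is pseudo-convex with respect to $H$, and $\delta X_{a_0,f_0}$ is injective on $K$.

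With these verified, Theorem~\ref{thm_local} applies verbatim: for $k\gg1$ there is $\eps>0$ so that whenever $a_j-a_0$, $f_j-f_0$ are in $C^k$, supported in $K$, and satisfy the smallness bound \r{nl.7a}, the H\"older stability estimate \r{stab} holds with some $\mu\in(0,1)$; in particular $X_{a_1}f_1=X_{a_2}f_2$ forces $(a_1,f_1)=(a_2,f_2)$.

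The main obstacle I anticipate is the pseudo-convexity verification on the small ball — specifically, confirming that no maximally extended zero bicharacteristic stays trapped inside $B(x_0,2r)$ and that bicharacteristic intervals do not escape the intended $K_2$. One must be slightly careful because the rays need not be smooth globally, only near points where \r{1.18x} holds; but on a ball small enough that \r{1.21} holds at every interior point, the smooth unit vector field obtained by solving $W(x,\theta)=0$ for $\theta$ (via the implicit function theorem, as in the discussion after \r{1.18x}) has integral curves that are the rays, and by shrinking $r$ so that this field is close to a constant field, every integral curve is uniformly transverse to the boundary spheres and thus leaves in bounded time, which gives both non-trapping and the convexity of small sub-balls. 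The remaining ingredients — the elliptic-factor identity \r{nl.5a}, the smoothness of $u_0$, and the invocation of Corollary~\ref{cor_nt} and Theorem~\ref{thm_local} — are routine.
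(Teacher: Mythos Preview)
Your proposal is correct and takes essentially the same approach as the paper: invoke Corollary~\ref{cor_nt} to obtain injectivity of $\delta X_{a_0,f_0}$ on sufficiently small balls about $x_0$, then apply Theorem~\ref{thm_local}. The paper's own proof consists of just these two sentences and leaves the pseudo-convexity hypothesis of Theorem~\ref{thm_local} implicit; your additional discussion of why a small enough ball is pseudo-convex (via the implicit function theorem giving smooth, nearly constant ray directions, hence nearly straight rays that exit and respect convexity of sub-balls) is a reasonable elaboration of a point the paper glosses over, not a genuinely different route.
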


\begin{proof}
By Corollary~\ref{cor_nt}, if $U$ is small enough, $\delta X_{a,f}$ is injective on any compact set $K\subset U$. Then we apply Theorem~\ref{thm_local}.
\end{proof}

\subsection{Conditions for smoothness and analyticity of $Ba$ and $Bf$} 
The results above require $Ba$ and $Bf$ to be either smooth enough or analytic in some open set $\Omega$. The smoothness, for example,  certainly hold if $a$ and $f$ are smooth enough in $\Omega$ but this is too restrictive. The following condition is sufficient.

\begin{proposition}\label{pr_sm}
Let $\Omega\subset\R^2$ be open. 
Let $\{c_j\}_{j=1}^N$ be a finite number of $C^k$ (respectively analytic) non-intersecting curves in $\R^2\setminus\Omega$ so that $a$ and $f$ are $C^k$/analytic in $\R^2\setminus\{c_j\}$,  up to the boundary on either side of each curve. Assume that each line through $\Omega$ intersects every $c_j$ transversely. Then $Ba$, $Bf$ are in $C^k(\Omega\times S^1)$, respectively analytic in $\Omega\times S^1$. 
\end{proposition}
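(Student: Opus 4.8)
The plan is to analyze the beam transform $Ba(x,\theta)=\int_0^\infty a(x+t\theta)\,\d t$ as an integral along the ray emanating from $x\in\Omega$ in the direction $\theta$, and to track how the regularity of $a$ is affected by the finitely many exceptional curves $c_1,\dots,c_N$. First I would fix $(x_0,\theta_0)\in\Omega\times S^1$ and observe that, since $a$ has compact support, the half-line $\{x_0+t\theta_0:t\ge 0\}$ meets the support of $a$ in a bounded $t$-interval; so the integral is really over a compact parameter set, and only finitely many values $t_1<\dots<t_m$ occur at which the ray crosses one of the curves $c_j$ (by the transversality hypothesis each crossing is isolated, and compactness of $\mathrm{supp}\,a$ together with non-intersection of the $c_j$ bounds their number locally uniformly). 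Away from these crossing times the integrand is jointly $C^k$ (resp.\ analytic) in $(t,x,\theta)$ by hypothesis.

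The key step is a change of variables that straightens each exceptional curve. Near a crossing of $c_j$ by the reference ray, the transversality assumption says $\theta_0$ is not tangent to $c_j$ at the crossing point; by the implicit function theorem (its analytic version in the analytic case) one can solve locally for the crossing time $t=\tau_j(x,\theta)$ as a $C^k$ (resp.\ analytic) function of $(x,\theta)$ near $(x_0,\theta_0)$, since the defining function $\rho_j$ of $c_j$ satisfies $\frac{d}{dt}\rho_j(x+t\theta)\big|_{t=\tau_j}=\theta\cdot\nabla\rho_j\neq 0$ by transversality. Then I split
\[
Ba(x,\theta)=\int_0^{\tau_1(x,\theta)}a\,\d t+\sum_{j}\int_{\tau_j(x,\theta)}^{\tau_{j+1}(x,\theta)}a\,\d t+\int_{\tau_m(x,\theta)}^{\infty}a\,\d t,
\]
and on each piece substitute $t=\tau_j(x,\theta)+s\big(\tau_{j+1}(x,\theta)-\tau_j(x,\theta)\big)$ so the $s$-domain becomes the fixed interval $[0,1]$ and the integrand, being a composition of the (one-sided up to the boundary) $C^k$/analytic function $a$ with the $C^k$/analytic change of variables, is jointly $C^k$/analytic in $(s,x,\theta)$ on $[0,1]\times(\text{nbhd of }(x_0,\theta_0))$. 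Differentiation under the integral sign (for $C^k$) or the standard fact that an integral of an analytic family over a compact interval is analytic (for the analytic case, via a Cauchy-estimate/uniform-convergence argument or Morera's theorem applied to each complexified variable) then yields that $Ba$ is $C^k$, resp.\ analytic, near $(x_0,\theta_0)$. The same argument applies verbatim to $Bf$, and since $(x_0,\theta_0)\in\Omega\times S^1$ was arbitrary we conclude $Ba,Bf\in C^k(\Omega\times S^1)$, resp.\ analytic there.

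The main obstacle is making the splitting uniform and the crossing-time functions globally well-defined: a priori the crossing times $\tau_j(x,\theta)$ are only defined locally by the implicit function theorem, different rays through $\Omega$ may meet the curves $c_j$ at different numbers of points, and one must rule out degenerations (tangencies, or crossing times escaping to infinity or colliding) as $(x,\theta)$ varies. This is handled by the hypotheses — transversality of every line through $\Omega$ to every $c_j$ is an open condition that persists in a neighborhood and prevents tangential contact, the curves are non-intersecting and (together with $\mathrm{supp}\,a$, $\mathrm{supp}\,f$) confined to a compact region, so the number of crossings is locally constant and the $\tau_j$ extend continuously and stay separated — so the local analysis above can be carried out on a fixed small neighborhood of each $(x_0,\theta_0)$, and the conclusion is a local one, which is all that is needed.
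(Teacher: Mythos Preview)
Your proposal is correct and follows essentially the same approach as the paper: split $Ba$ into a finite sum of integrals with limits given by the crossing times of the ray with the curves $c_j$, and observe that each piece inherits the regularity. The paper's proof is much terser---it simply writes down the decomposition $Ba(x,\theta)=\sum_j\int_{\alpha_j(x,\theta)}^{\alpha_{j+1}(x,\theta)}a(x+t\theta)\,\d t$ and declares the result to follow directly---whereas you supply the details (implicit function theorem for the $C^k$/analytic dependence of the crossing times on $(x,\theta)$ via transversality, change of variables to a fixed domain, differentiation under the integral sign and the analytic-family argument) that the paper leaves implicit.
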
 

\begin{proof}
Near each $(x_0,\theta_0)\in \Omega\times S^1$, $Ba$, and similarly $Bf$, is given by
\[
Ba(x,\theta) = \sum_{j=1}^N \int_{\alpha_j(x,\theta)}^{\alpha_{j+1}(x,\theta)} a(x+t\theta,\theta)\,\d t,
\]
where $\alpha_0=0$, $\alpha_{N+1}=\infty$, and the rest of the $\alpha_j$'s are determined by the intersection points of the ray $x+t\theta$ with the curves $c_j$. The statement now follows directly from this representation.
\end{proof}

\begin{remark}
We presented the condition above in a form suitable for applications. For $C^\infty$, respectively, analytic regularity of $a$, $f$ in $\Omega\times S^1$, it is necessary and sufficient to assume that $a$ and $f$ have the same regularity in $\Omega$; and $a$, $f$, have no $C^\infty$, respectively analytic singularities, conormal to some line through $\Omega$. The necessity follows from standard properties of the Radon transform to recover conormal smooth or analytic singularities. This condition is sufficient, because of the standard relation between the smooth/analytic wave front set of $Ba$ or $Bf$ on one side; and the Schwartz kernel of $B$ and $a$ or $f$, on the other. We sill skip the details.
\end{remark}

\section{Further microlocal properties of $\mathcal{I}$} \label{sec_micro} 
Take the Fourier transform of $I_w(p\theta^\perp,\theta)$  w.r.t.\ $p$ to get
\be{nl.12aa}
\int e^{-\i \lambda p}  I_wf(p\theta^\perp,\pm\theta)\, \d p = \int e^{-\i \lambda \theta^\perp\cdot y }w(y,\pm \theta)f(y)\,\d y.
\ee
Set $\xi=\lambda\theta^\perp$, $\lambda\ge0$, to get
\be{nl.12a}
\int_{\R} e^{-\i p|\xi|}(I_wf)(p\theta^\perp , \pm \xi^\perp/|\xi|)\,\d p= \int e^{-\i y\cdot\xi} w(y, \pm \xi^\perp/|\xi|)f(y)\,\d y.
\ee
Take the inverse Fourier transform of both sides to get
\be{nl.13}
\bar w^*(x,\pm D^\perp/|D|)f= (2\pi)^{-2} \int_{\R\times \R^2} e^{\i( x\cdot\xi -p|\xi|)}(I_wf)(p\xi/|\xi|, \pm \xi^\perp/|\xi|) \,\d p\,\d \xi,
\ee
where $\bar w^*(x,\pm D^\perp/|D|)$ is the \PDO\ with amplitude $\alpha(x,y,\xi) = w(y,\pm \xi^\perp/|\xi|)$. The principal symbol is $w(x,\pm \xi^\perp/|\xi|)$. 
If $w=1$, one can see that we get $C|D|I_1'I_1f$ on the right, and $f$ on the left, which is just one of the inversion formulas for $I_1$.

Apply the described operation to the equation
\be{nl.11}
I_{w_1}{\f}_1+I_{w_2}{\f}_2=0,
\ee
compare with \r{1.7eq}. We get
\be{nl.14}
\bar w_1^*(x,\pm D^\perp/|D|){\f}_1 + \bar w_2^*(x,\pm D^\perp/|D|){\f}_2=0.
\ee
This is actually a system, see also \r{0.1}. 

\begin{proposition}\label{pr_eq}
Let $w_{1,2}$ be two smooth weight functions, and let ${\f}=({\f}_1,{\f}_2) \in \mathcal{E}'(\R^n)$. Then 
\be{nl.14a}
\mathcal{I}{\f}\in H^s(Z)
\ee
if and only if
\be{nl.15}
\begin{pmatrix}
\bar w_{1}^*(x,D^\perp/|D|)  &\bar w_{1}^*(x,D^\perp/|D|)   \\
   \bar  w_{2}^* (x,-D^\perp/|D|)    & \bar  w_{2}^*(x,-D^\perp/|D|)
\end{pmatrix}\! {\f}\in H^{s-1/2}(\R^2). 
\ee
\end{proposition}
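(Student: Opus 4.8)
The plan is to derive the equivalence \r{nl.14a} $\iff$ \r{nl.15} by relating both sides through the Fourier-slice computation \r{nl.12aa}--\r{nl.13} applied simultaneously in the $+\theta$ and $-\theta$ directions. First I would observe that the operator on the left of \r{nl.15} is, row by row, exactly the operator that arises from applying the "inverse-Fourier-in-$p$" procedure of \r{nl.12aa}--\r{nl.13} to the two functions $(I_{w_1}\f_1+I_{w_2}\f_2)(p\theta^\perp,\pm\theta)$: the top row collects the $+\theta$ slice and reproduces $\bar w_1^*(x,D^\perp/|D|)\f_1+\bar w_1^*(x,D^\perp/|D|)\f_2$... wait — one must be careful here: the off-diagonal entries of \r{nl.15} carry $w_1$ and $w_2$, not a repeated $w_1$, so I would take the top row to be $\bar w_1^*(x,D^\perp/|D|)\f_1 + \bar w_2^*(x,D^\perp/|D|)\f_2$ (this is \r{nl.14} with the $+$ sign, i.e.\ \r{nl.13} applied to $\mathcal{I}\f$ in the $+\theta$ direction) and the bottom row the analogous identity with $-\theta$. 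The key algebraic point, already recorded in the displays before \r{nl.13}, is that \r{nl.13} is an \emph{exact} operator identity: for any $f$, the right-hand side of \r{nl.13} equals $\bar w^*(x,\pm D^\perp/|D|)f$ on the nose, with no error term. So applying it to $h:=\mathcal{I}\f$ gives
\[
\begin{pmatrix}
\bar w_1^*(x,D^\perp/|D|) & \bar w_2^*(x,D^\perp/|D|)\\
\bar w_1^*(x,-D^\perp/|D|) & \bar w_2^*(x,-D^\perp/|D|)
\end{pmatrix}\f
= (2\pi)^{-2}\!\int e^{\i(x\cdot\xi - p|\xi|)}\, h\big(p\,\xi/|\xi|,\pm\xi^\perp/|\xi|\big)\,\d p\,\d\xi,
\]
with the $\pm$ read off componentwise.

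Next I would quantify the mapping properties of the transform $h\mapsto$ (right-hand side above). The map taking $h$, a function on $Z$, to the expression $\int e^{\i(x\cdot\xi-p|\xi|)}h(p\xi/|\xi|,\pm\xi^\perp/|\xi|)\,\d p\,\d\xi$ is, up to constants, the composition of a Fourier transform in $p$ with an extension/reparameterization and an inverse Fourier transform in $x$; since it introduces exactly one power of $|\xi|$ relative to $|D|I_1'I_1$ (as the $w=1$ remark after \r{nl.13} shows, where one lands on $C|D|I_1'I_1 f$ on the right and $f$ on the left), the correct bookkeeping is that this operator maps $H^s(Z)\to H^{s-1/2}(\R^2)$ continuously and, on the relevant microlocal region, has a continuous inverse $H^{s-1/2}(\R^2)\to H^s(Z)$ modulo smoothing — here I would lean on the Sobolev-space conventions of Section~\ref{sec_prel} (the $H^s(Z)$ norm uses only $p$-derivatives, which is exactly matched to the $p$-Fourier variable $\lambda=|\xi|$) and on the standard Fourier-slice estimates for $I_1$, e.g.\ \cite[Theorem~II.5.2]{Natterer-book}. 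The point is that the transform $h\mapsto(\cdots)$ is an isomorphism between $H^s(Z)$ (for $h$ of the form $\mathcal{I}\f$ with $\f$ compactly supported, so that the slices decay appropriately) and $H^{s-1/2}$ of its range. Granting this, the exact identity above shows the left side of \r{nl.15} lies in $H^{s-1/2}(\R^2)$ precisely when $h=\mathcal{I}\f\in H^s(Z)$, which is \r{nl.14a}.

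For the converse direction one runs the same argument backwards: from $\f$ compactly supported and the matrix expression \r{nl.15} in $H^{s-1/2}$, one recovers $h$ from the two slices by the inverse of the transform, concluding $\mathcal{I}\f\in H^s(Z)$; since $\f\in\mathcal{E}'$, no difficulty with supports arises, and the inversion is exact modulo a smoothing operator whose contribution is harmless. I expect the main obstacle to be the bookkeeping of the half-derivative and the justification that the transform $h\mapsto(\cdots)$ is genuinely invertible (not merely bounded) on the range of $\mathcal{I}$ — in particular, checking that the two slices at $+\theta$ and $-\theta$ together carry enough information to reconstruct $h$ on all of $Z$. This is where the oddness structure enters: the pair of slices determines the even and odd parts of $h$ in $\theta$ separately, hence $h$ itself, and the reconstruction is stable with the stated one-half derivative gain. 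Once that inversion statement is in hand, the equivalence is immediate from the exact identity \r{nl.13}.
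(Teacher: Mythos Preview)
Your approach is essentially the paper's: use the exact identity \r{nl.13} applied to $\mathcal{I}\f$ in the $\pm\theta$ directions, then track the half-derivative shift between $H^s(Z)$ and $H^{s-1/2}(\R^2)$ via the $p$-Fourier transform. Two small points: drop the ``modulo smoothing'' hedges, since the identity \r{nl.13} and its Fourier inverse are exact (no error term is needed); and the half-derivative bookkeeping becomes concrete once you write $\d\xi=|\lambda|\,\d\lambda\,\d\theta$, so that \r{nl.15} in $H^{s-1/2}$ is equivalent to $\langle\lambda\rangle^{s-1/2}|\lambda|^{1/2}\mathcal{F}_{p\to\lambda}\mathcal{I}\f\in L^2$, which (using both signs to cover $\lambda\in\R$) is equivalent to $\langle\lambda\rangle^{s}\mathcal{F}_{p\to\lambda}\mathcal{I}\f\in L^2$, i.e.\ \r{nl.14a}.
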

\begin{proof}
Assume that the l.h.s.\ of \r{nl.15} is in $H^{s-1/2}$. Then the r.h.s.\ of \r{nl.13} with $I_w{\f}$ replaced by $\mathcal{I}{\f} := I_{w_1}{\f}_1+I_{w_2}{\f}_2$ belongs to the same space. Take the Fourier transform of that to get, see also \r{nl.12aa}, 
\be{nl.16}
\langle \lambda\rangle^{s-1/2}\int_{\R} e^{-\i \lambda p}\mathcal{I}{\f}(p\theta^\perp, \pm \theta)\,\d p \quad \in\quad L^2\left(\R_+\times S^1,\, \lambda\,\d\lambda\,\d\theta\right).
\ee
Since the relation above holds with either choice of the $\pm$ sign, we can fix the positive one, and allow $\lambda$ to be negative, as well. Therefore, $\langle\lambda\rangle^{s-1/2}|\lambda|^{1/2} \mathcal{F}_{p\mapsto\lambda} \mathcal{I}(p\theta^\perp,\theta) \in L^2\left(\R\times S^1,\, \d\lambda\,\d\theta\right)$. 
 This easily implies, see e.g., the proof of  \cite[Theorem~II.5.1]{Natterer-book}, that  $ \langle \lambda \rangle^{s}\mathcal{F}_{p\mapsto\lambda} \mathcal{I}(p\theta^\perp,\theta) \in L^2(\R\times S^1)$, which yields \r{nl.14a}. 

Now, assume \r{nl.14a}. Reversing the arguments above, we get \r{nl.16}. Take inverse Fourier transform w.r.t.\ $\xi=\lambda\theta^\perp$ to get \r{nl.15}.
\end{proof}

Proposition~\ref{pr_eq} reduces the problem of the microlocal invertibility of the FIO $\mathcal{I}$ to that of the matrix valued \PDO\ in \r{nl.15} with a principal symbol 
\be{nl.17}
\begin{pmatrix}
 w_{1}(x,\theta)  &w_{2}(x,\theta)   \\
   w_{1} (x,-\theta)    &  w_{2}(x,-\theta)
\end{pmatrix}\bigg|_{\theta=\xi^\perp/|\xi|}. 
\ee
The determinant of the latter is $W(x,\xi^\perp/|\xi|)$, see \r{det} and \r{W1}. An immediate consequence of \r{nl.15} is the following. For some matrix valued classical \PDO\ $\tilde P$ with a scalar principal symbol $p_0(x,\xi) = W(x,\xi^\perp/|\xi|)$, see \r{1.13}, relation \r{nl.14a} implies
\be{nl.18}
\tilde P {\f} \in H^{s-1/2}(\R^2).
\ee
This also follows from Proposition~\ref{pr_1.1}.

Assume now  that \r{1.18x} is satisfied for some $(x_0,\theta_0)$. Then we can solve the equation $W(x,\theta)=0$ for $\theta\in S^1$ locally to get a smooth function $\theta(x)$. Since $W$ is an odd function of $\theta$, the same thing applies near the point $(x_0,-\theta_0)$, as well, with a solution $-\theta(x)$. This implies that in a conic neighborhood of $(x_0,\pm\theta_0^\perp)\in \Sigma$, the characteristic manifold $\Sigma$ is given by $\xi^\perp/|\xi^\perp| = \pm \theta(x)$. Set $v_j^\pm(x) = w_j(x,\pm \theta(x))$, $j=1,2$. Then $w_j(x,\pm \xi/|\xi^\perp)-  v_i^\pm(x)$ vanishes on $\Sigma$, and is therefore locally given by $p_0(x,\xi)$ times a smooth function, homogeneous of order $0$ in $\xi$, hence a symbol. This implies that \r{nl.15} can be written as 
\be{nl.17c}
\begin{pmatrix}
 v_1^+(x)  &v_2^+(x)   \\
   v_2^+(x)    &  v_2^-(x)
\end{pmatrix} {\f} + (Q_0\tilde P+Q_{-1}){\f}\in H^{s-1/2}(x_0,\xi_0),
\ee
where $Q_0$ and $Q_{-1}$ are classical \PDO s of order $0$ and $-1$, respectively. 
 Using \r{nl.18}, we get
\be{nl.18n}
v_1^\pm {\f}_1+ v_2^\pm {\f}_2+Q^\pm _{-1}{\f}\in H^{s-1/2}(x_0,\xi_0),
\ee
with $Q^\pm _{-1}$ of order $-1$,  
and the equations with the $+$ and the $-$ sign are actually linearly dependent up to the lower order term (including the possibility that one of them has zero coefficients). Now, if the assumptions of Theorem~\ref{thm_lin} are satisfied, \r{nl.14a} yields ${\f}\in H^{s-3/2}$. Then $Q^\pm _{-1}{\f}\in H^{s-1/2}$, and 
we get
\be{nl.18a}
v_1^\pm {\f}_1+ v_2^\pm {\f}_2 \in H^{s-1/2}(x_0,\xi_0).
\ee
Since the matrix in \r{nl.17c} has rank $1$, only one of the equations \r{nl.18a} is relevant. 
This is an improvement over the estimate \r{1.18b}, that asserts that \r{nl.14a} implies ${\f}_{1,2}\in H^{s-3/2}$, if $\supp {\f}$ is supported in a non-trapping compact set. This improvement applies to the linear combination \r{nl.18a} only. 

\subsection{Applications to the linearized Identification Problem} 
Let $\mathcal{I}= \delta X_{a,f}$ be the linearization of $X_af$, see Proposition~\ref{pr1}. Then $f_j$, $w_j$ are given by \r{nl.5}. The determinant $W$ can be replaced in the analysis by $W_0$, see \r{nl.5a}. Notice that $w_2>0$. The discussion above yields the following.

\begin{proposition}\label{pr_WF}
Fix $(x_0,\theta_0)\in \R^2\times S^1$. Let $W_0(x_0,\theta_0)=0$ and $\partial_{\theta^\perp}W_0(x_0,\theta_0)\not =0$. Let
\be{nl.v}
v(x) = u(x,\theta(x)),\quad \text{for $x$ near $x_0$},
\ee
where $\theta(x)$ is the unique local solution of $W_0(x,\theta)=0$ with $\theta(x_0,\theta_0)=\theta_0$, and $u$ is defined by  \r{pr.2}. Then, if the assumptions of Theorem~\ref{thm_lin} are satisfied, and if 
 $\delta X_{a,f} (\delta a, \delta f)\in H^s$, we  have 
\be{nl.19}
v\delta a - \delta f\in H^{s-1/2}(x_0,\pm \theta_0^\perp).
\ee
\end{proposition}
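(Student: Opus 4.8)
The plan is to specialize the general microlocal reduction already carried out in equations \r{nl.14}--\r{nl.18a} to the linearized identification operator $\delta X_{a,f}$, where by Proposition~\ref{pr1} the weights are $w_1=-e^{-Ba}u$ and $w_2=e^{-Ba}$ (the non-tilde version of \r{nl.5}), and to track the elliptic factor $e^{-Ba}$ through the computation. First I would recall that the relevant determinant is $W$, but since $W=e^{-Ba}e^{-BJa}W_0$ with $W_0=u-Ju$ (the analogue of \r{nl.5a}), and $e^{-Ba}$ is a nonvanishing smooth factor, $W$ and $W_0$ have the same zero set and the same characteristic variety $\Sigma$; moreover the hypotheses $W_0(x_0,\theta_0)=0$ and $\partial_{\theta^\perp}W_0(x_0,\theta_0)\neq 0$ are exactly \r{1.18x} phrased for $W_0$, hence equivalent to \r{1.18x} for $W$ near that point.

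Next I would invoke the implicit function theorem on $W_0(x,\theta)=0$ to produce the smooth unit vector field $\theta(x)$ near $x_0$ with $\theta(x_0)=\theta_0$, and since $W_0$ is odd in $\theta$, also the branch $-\theta(x)$ near $-\theta_0$; this identifies $\Sigma$ locally near $(x_0,\pm\theta_0^\perp)$ as $\xi^\perp/|\xi^\perp|=\pm\theta(x)$. Then I set $v_j^\pm(x)=w_j(x,\pm\theta(x))$; with $w_2=e^{-Ba}>0$ and the definition \r{nl.v} of $v(x)=u(x,\theta(x))$, I have $v_1^+(x)=-e^{-Ba(x,\theta(x))}v(x)$ and $v_2^+(x)=e^{-Ba(x,\theta(x))}$ (up to the sign/branch bookkeeping), so that the row $(v_1^+,v_2^+)$ is $e^{-Ba(x,\theta(x))}$ times $(-v(x),1)$. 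Following \r{nl.17c}--\r{nl.18a} verbatim (the rank-one reduction, using \r{nl.18} to control the $\tilde P\f$ term and the hypothesis that Theorem~\ref{thm_lin} applies so that $\f\in H^{s-3/2}$ absorbs the order $-1$ remainders $Q^\pm_{-1}\f$), I conclude that $\delta X_{a,f}(\delta a,\delta f)\in H^s$ implies $v_1^+\delta a+v_2^+\delta f\in H^{s-1/2}(x_0,\theta_0^\perp)$, and the analogous statement at $(x_0,-\theta_0^\perp)$. Finally I divide by the elliptic, nonvanishing, smooth factor $e^{-Ba(x,\theta(x))}$ (multiplication by a smooth nonvanishing function preserves $H^{s-1/2}$ microlocally) to obtain $-v\delta a+\delta f\in H^{s-1/2}(x_0,\pm\theta_0^\perp)$, i.e.\ \r{nl.19} up to an overall sign, which is immaterial.

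The only genuinely delicate point is the sign and branch bookkeeping: one must make sure that $v$ as defined in \r{nl.v} really is the coefficient that appears (rather than $u(x,-\theta(x))$), and that the $+$ and $-$ equations, which are linearly dependent modulo lower order by the rank-one structure established in \r{nl.18n}, both collapse to the \emph{same} relation \r{nl.19} in $H^{s-1/2}(x_0,\pm\theta_0^\perp)$. This is exactly why the conormal direction in \r{nl.19} is written as $\pm\theta_0^\perp$: the two microlocal estimates at the two antipodal covectors carry the same coefficient $v$ because $u-Ju$ being the relevant object means the relevant weight ratio $w_1/w_2=-u$ is evaluated at $\theta(x)$ in one case and, by oddness of $W_0$, the companion relation at $-\theta(x)$ produces $-u(x,-\theta(x))$, which coincides with $-u(x,\theta(x))=-v(x)$ precisely on $\Sigma$ where $W_0=u-Ju=0$. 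Everything else is a direct transcription of the already-proved general facts \r{nl.14}--\r{nl.18a}, so no new analytic input beyond Theorem~\ref{thm_lin} and the microlocal calculus is needed.
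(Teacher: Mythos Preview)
Your proposal is correct and follows essentially the same route as the paper: specialize the already-established relation \r{nl.18a} to the weights $w_1=-e^{-Ba}u$, $w_2=e^{-Ba}$, observe $w_1=-uw_2$ with $w_2>0$, and divide by the elliptic factor $w_2$ evaluated on $\Sigma$. The paper's own proof is a three-line version of exactly this; your additional remarks on the $W$ versus $W_0$ equivalence and on why both $\pm$ branches yield the same relation (via $u-Ju=0$ on $\Sigma$) are correct elaborations of points the paper leaves implicit.
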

\begin{proof}
In this particular case, $w_1= -e^{-Ba}u$, $w_2= e^{-Ba}$. Under the non-degeneracy assumption on $W_0$, $w_2>0$, and $w_1=-uw_2$. Divide by the elliptic factor $w_2$ in either of the two relations \r{nl.18a} to get \r{nl.19}. 
\end{proof}

\begin{remark}
Theorem~\ref{thm_lin} says that under the non-trapping condition we can recover $WF(f_{1,2})$ with a loss of one derivative, compared to the standard X-ray transform. On the other hand, Proposition~\ref{pr_WF} says that under the additional mild condition on $W$, one can recover the wave front set of the linear combination \r{nl.19} without loss. This has the following implications for the recovery of $a$ and $f$: we can expect $v\delta a - \delta f$ to be recoverable in a more stable way than either $\delta a$ or $\delta f$. 
\end{remark}

\begin{remark}\label{rem4}
We need to assume that the assumptions of Theorem~\ref{thm_lin} are satisfied just to conclude that $f\in H^{s-3/2}$; and then to deduce that $Q^\pm _{-1}f\in H^{s-1/2}$, see \r{nl.18n} and \r{nl.18a}. If we know a priori that $f$ has certain regularity, then we can use that fact instead. In applications, it would be natural to assume that $(\delta a, \delta f)\in L^2$. Let us assume that the measurements show that $\delta X_{a,f} (\delta a, \delta f)\in H^{3/2}$ (or better). Then we conclude that $v\delta a-\delta f\in H^{1}$, that in particular excludes jump types of singularities at smooth surfaces of that particular linear combination. There is no need to assume the trapping condition for this argument. 
\end{remark}

\section{The radial case} \label{sec_radial}
As explained in the Introduction, the thorough study of the case of radial $a$ and $f$ is behind the scope of this work. The purpose of this section is to present a case, where the rays can be easily computed, when both the linearized map, and the non-linear one have huge kernels if the non-trapping assumption is not satisfied. So at least in the cases described below, the non-trapping assumptions is not only sufficient but also necessary for the problem to be ``well-behaved''. 

\subsection{The linearized map for a simple radially symmetric example}
We start with perhaps the simplest example. Let $\mathbf{1}_{B(0,1)}$ be the characteristic function of the unit disk. We study the linearization $\delta X$ w.r.t.\ $(a,f)$ near
\be{r.1}
a=0, \quad f=\mathbf{1}_{B(0,1)}.
\ee
We will choose perturbations of those $a$ and $f$ supported in $B(0,1)$ only. 
The weight $w$, see \r{w2} or \r{w2'}, restricted to the unit disk, is given by
\be{r.2}
w(x,\theta) = 
-\sqrt{1-(\theta^\perp\cdot x)^2}-\theta\cdot x.
\ee
Then, see \r{nl.5a}, 
\be{r1}
W_0 = -2\theta\cdot x.
\ee
The Hamiltonian $H$, up to a constant factor, is as in Example~\ref{ex_radial}. Indeed, by  \r{nl.7}, $H= -2x\cdot\xi^\perp/|\xi| = -2(x_1\xi_2-x_2\xi_1)/|\xi|$. Therefore, $-2|\xi|H$ is the symbol of 
\[
x_1D_2-x_2D_1 = -\i \partial/\partial\phi, 
\]
where $\phi$ is the polar angle in the $x$ space. The bicharacteristics are given by  \r{1.18ch}. In particular, the rays are the concentric circles $|x|=R$, $R\ge0$, including the degenerate case $x=0$. As before, $K\subset B(0,1)$ is non-trapping, if and only if $K$ does not contain an entire circle of that kind, see Figure~\ref{fig:fig_identification2}.

\begin{figure}[h]
  \centering
  \includegraphics[bb=0 0 480 197,width=4.42in,height=1.81in,keepaspectratio]{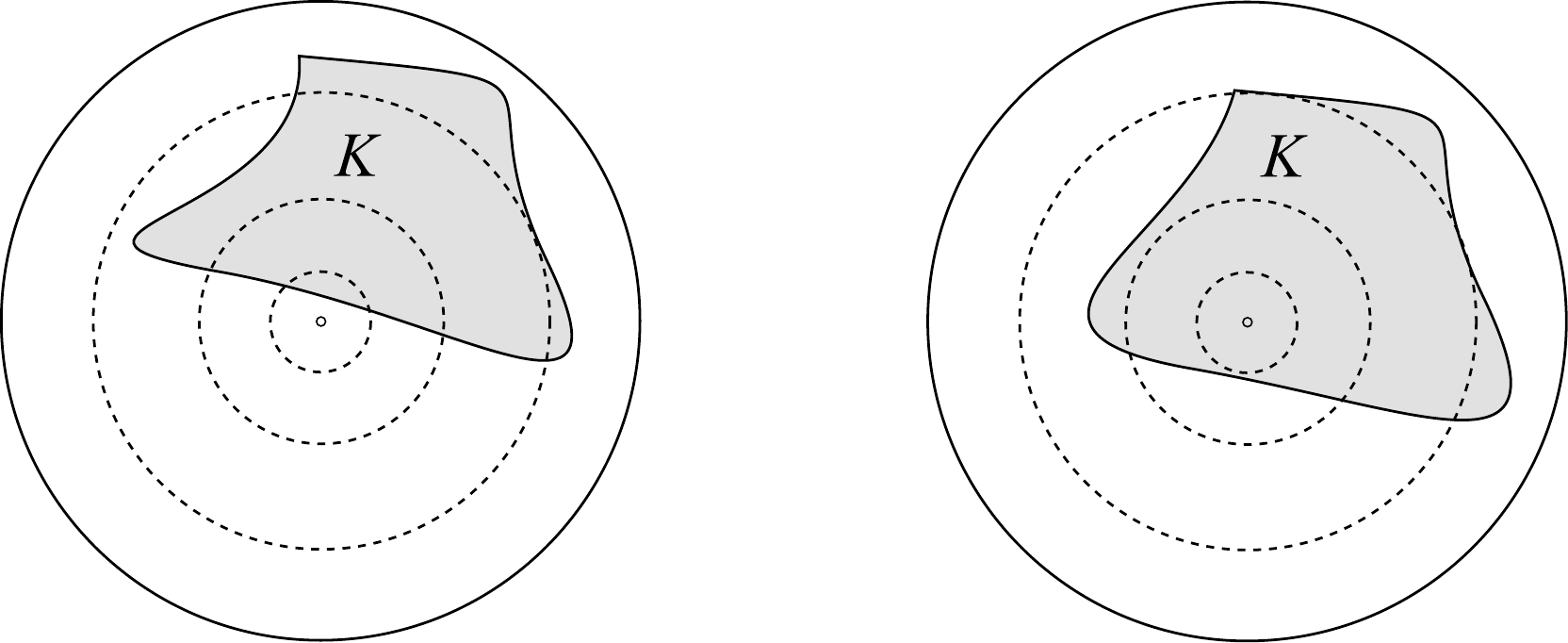}
  \caption{The rays  of Example~\ref{ex_radial} in the unit disk and an example of a non-trapping $K$, left; and a trapping $K$, right.}
  \label{fig:fig_identification2}
\end{figure}

The equation $\delta X(\delta a, \delta f)=0$ can then be written as
\[
-\int_{\ell_{z,\theta}} \left( \sqrt{1-(\theta^\perp\cdot x)^2}+\theta\cdot x \right) \delta a\, \d s+\int_{\ell_{z,\theta}}\delta f\,\d s=0,
\]
where $\ell_{z,\theta}$ is the line through $z\in\theta^\perp$ in the direction of $\theta$, and $\d s$ is the natural measure on it. The integral over the line $\ell_{z,-\theta}$ would produce the same term with $\theta\cdot x$ replaced by $-\theta\cdot x$. Therefore, both the even and the odd part w.r.t.\ $\theta$ above vanish:
\be{nl.10}
-\int_{\ell_{z,\theta}}  \sqrt{1-(\theta^\perp\cdot x)^2} \delta a\, \d s+\int_{\ell_{z,\theta}}\delta f\,\d s=0,\qquad \int_{\ell_{z,\theta}}\theta\cdot x   \delta a\, \d s=0.
\ee
The third integral is the X-ray transform of the vector field $(\delta a)x$. It is well known that we can only determine the curl of that, i.e.,
\[
(x_1\partial_2-x_2\partial_1)\delta a=0.
\]
In other words, $\delta a$ needs to be radial. Then the first  term in \r{nl.10} is invariant under rotations of $(x,\theta)$, i.e., when we consider $(x,\theta)$ as points in the unit tangent bundle. Then so is the second term. Apply $I_1'$ to it, and we get that $|D|^{-1}\delta f$ is radial, as well. Then so is $\delta f$. 

Therefore, the kernel of $\delta X(\delta a, \delta f)$ consists of radial $\delta a$ and $\delta f$ that are connected by the first identity in \r{nl.10}. Since the weight there is constant along the lines, using  Radon transform notation, $Rh(p,\omega)$, we get
\be{r5}
\sqrt{1-p^2}R\delta a  -R\delta f=0.
\ee
It follows from the analysis below that there exists an infinite dimensional space of pairs $(\delta a,\delta f)$ satisfying \r{r5}. Indeed, for any radial  $\delta a\in C_0^\infty(B(0,1))$, we can solve \r{r5} for $\delta f$, and vice versa. 

Going back to \r{nl.10}, the arguments in the proof of Proposition~\ref{pr_WF} (or its conclusion directly, together with Remark~\ref{rem4}) show that $\delta X(\delta a, \delta f)\in H^s$ and $(\delta a, \delta f)\in H^{s-3/2}$ imply 
\[
\sqrt{1-|x|^2}\delta a-\delta f\in H^{s-1/2}
\]
in the annulus $U := \{0<|x|<1\}$; i.e., the singularities of that particular linear combination in $U$ can be recovered without a derivative loss. Note that for any $x\in U$, the characteristic directions  (zeros of $W$) are given by $\theta=\pm x^\perp/|x|$, and the characteristic codirections --- by $\xi=\pm x/|x|$. Then the integral of $f$, starting from $x$, in a characteristic direction $\theta$ is exactly 
$\sqrt{1-|x|^2|}$. This is the value of $u$ for characteristic directions, see \r{nl.7} and \r{nl.v}, and  confirms \r{nl.19}. 

\subsection{The linearized map for   $a=0$ and $f$ radial has an infinite dimensional kernel} Let now $a$ and $f$ be general radial smooth functions of compact support. Then the characteristic variety of Example~\ref{ex_radial} and Section~\ref{sec_radial} 
\[
\Sigma_0=\{(x,\xi);\; \text{$x$ and $\xi$ are collinear}\}
\]
is included in the characteristic variety $\Sigma$ in this case but the latter can be larger. The Hamiltonian curves (with rays $|x|=R\ge0$) of those examples are still Hamiltonian curves in the present case but there may be more. If $f>0$ in $B(0,1)$, then it is easy to see that the Hamiltonian flow over $B(0,1)$ is the same. 

We study now $\delta X_{a,f}$ for
\be{6}
a=0, \quad \text{$f$ radial}. 
\ee
We also assume that $f$ is smooth and has compact support. With some abuse of notation, we replace $f$ by  $f=f(|x|)$, where $f$ has even smooth extension. By Proposition~\ref{pr1}, 
\be{r7}
\delta X_{0,f} (\delta a, \delta f) = -I_{JBf} \delta a+I_0\delta f,
\ee
see \r{J}. We restrict $\delta X_{0,f} $ to radial $\delta a$, $\delta f$, as well. 

We will use Radon type of parameterization for $I_{JBf} \delta a$ by setting $\omega=\theta^\perp$. Write
\[
 R_{JBf}\delta a  (p,\omega)  =     I_{JBf}    \delta a(p\omega,-\omega^\perp)  = \int \delta(p-\omega\cdot x)Bf(x,\omega^\perp) \delta a(x)\,\d x.
\]
Here $\delta$ is the Dirac Delta function, not to be confused with the variation symbol in $\delta a$, $\delta f$. 
Since $f$ is radial, for any rotation $U$, we have $Bf(Ux,U\omega^\perp) =Bf(x,\omega^\perp) $. Since $\delta a$ is radial as well, we easily get that $I_{JBf} \delta a$ is independent of $\omega$, i.e., $I_{JBf} \delta a= I_{JBf} \delta a(p)$. 
We claim that $I_{JBf} \delta a(p)$ is an even function of $p$. Indeed, set $\omega=(1,0)$. Then 
\begin{alignat*}{2}
R_{JBf} \delta a(-p)  & = \int \delta(-p-x_1)Bf(x,(0,1)) \delta a(x)\,\d x \\
  & = \int \delta(p+x_1)Bf(x,(0,1)) \delta a(x)\,\d x  &\quad& \text{because $\delta$ is even}\\
&= \int \delta(p-x_1)Bf(x,(0,1)) \delta a(x)\,\d x  &&\text{after the change $x_1\mapsto -x_1$}\\
&=R_{JBf} \delta a(p).      
\end{alignat*}
In the last equation, we also used the fact that $f$ is radial. 

To study the kernel of $\delta X_{0,f}$, we write, see \r{r7},
\be{R12}
 R_{JBf\delta} a- R\delta f=0,
\ee
where, with some change of notation again, $R$ is the classical Radon transform acting on radial functions, i.e., considered as a map on functions of a single variable. It is easy to see that, see also \cite{Helgason-Radon},
\[
{R}g(p) = 2\int_\R g\left(\sqrt{p^2+t^2}\right) \d t,  \quad p\ge0.
\]
It is known, see \cite{Natterer-book}, and can be easily seen that this equation can be written in the form
\[
Rg(p) = 2\int_p^\infty \left( 1-\frac{p^2}{r^2} \right)^{-1/2}g(r)\,\d r,  \quad p\ge0.
\]
This an equation of Abel type with explicit inversion given by (see \cite{Gorenflo_Abel, Natterer-book})
\be{R3}
g(r) = -\frac1\pi \int_r^\infty (p^2-r^2)^{-1/2}\frac{\d}{\d p} Rg(p)\, \d p.
\ee
Moreover,  the Abel transform $R$ is given by a composition of the cosine Fourier transform $F_c$ and the zero order Hankel one $H_0$ (see  \cite{Gorenflo_Abel}), with a proper normalization, i.e., $R=F_cH_0$. 
If $h\in C^\infty(\R_+)$ is of compact support, and admits a smooth even extension,  then we get a direct confirmation that the equation $Rg=h$ has a (unique) solution given by $g=H_0F_ch$. Indeed, for such $h$, $F_ch$ has smooth even extension in the Schwartz class, and then $H_0F_ch$ is well defined and solves $\mathcal{R}g=h$. 

This shows that the function $\delta$ in \r{R3} is given by
\be{R4}
\delta f(r) = -\frac1\pi \int_r^\infty (p^2-r^2)^{-1/2}\frac{\d}{\d p} I_{JBf}\delta a (p\omega,\omega^\perp)\, \d p,
\ee
see also \r{r7}. We recall that $I_{JBf}\delta a $ is independent of $\omega$. We summarize this into the following.

\begin{proposition}\label{pr_rad1}
Let $f\in C_0^\infty(\R^2)$ be radial. Then the linearized map $\delta X_{0,f}$ (with $a=0$) has an infinite dimensional kernel, including all radial pairs $(\delta a, \delta f)$ with $\delta a$ smooth function of compact support, and $\delta f$ given by \r{R4}.
\end{proposition}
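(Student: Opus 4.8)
The plan is to exhibit the kernel explicitly and then argue it is infinite-dimensional. First I would restrict $\delta X_{0,f}$ to radial pairs $(\delta a,\delta f)$, as the discussion preceding the statement permits, and reduce the equation $\delta X_{0,f}(\delta a,\delta f)=0$ via \r{r7} to the identity \r{R12}, i.e. $R_{JBf}\delta a = R\delta f$, where $R$ is the Radon transform acting on radial functions (equivalently an Abel-type transform on functions of one variable). The key observations, all already established in the text, are: (i) for radial $\delta a$ the quantity $I_{JBf}\delta a$ is independent of the angular variable $\omega$ and is an even function of $p$; (ii) the Abel transform $R$ has the explicit inversion \r{R3}, equivalently $R=F_cH_0$ with inverse $H_0F_c$, and this inverse is well defined on smooth compactly supported functions admitting a smooth even extension. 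Combining (i) and (ii) shows that, given \emph{any} radial $\delta a\in C_0^\infty(\R^2)$, the function $I_{JBf}\delta a(p)$ is smooth, compactly supported, and even, hence lies in the range of $R$, so the equation \r{R12} can be solved uniquely for $\delta f$, and the solution is exactly the formula \r{R4}.

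Next I would verify that this $\delta f$ is itself a legitimate element of the problem, i.e. a compactly supported distribution (in fact a radial function). Compact support follows because $Bf$ is supported where the line meets $\supp f$, so $I_{JBf}\delta a$ is supported in a bounded $p$-interval, and the Abel inversion \r{R3} preserves compact support (the lower limit $r$ in the integral exceeds the support radius kills it). Smoothness of $\delta f$ as a function of $r$ follows from the mapping properties of $H_0F_c$ on the Schwartz-even class, exactly as in the sentence ``If $h\in C^\infty(\R_+)$\ldots then \ldots $H_0F_ch$ is well defined and solves $Rg=h$.'' So for each radial $\delta a\in C_0^\infty$ we obtain a genuine pair $(\delta a,\delta f)$ in $\Ker\delta X_{0,f}$.

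Finally, infinite-dimensionality is immediate: the map $\delta a\mapsto(\delta a,\delta f)$ just constructed is linear and injective (the first component alone determines injectivity), and its domain, the space of radial functions in $C_0^\infty(\R^2)$, is infinite-dimensional (e.g. radial bump functions supported in shrinking annuli are linearly independent). Hence $\Ker\delta X_{0,f}$ contains an infinite-dimensional subspace, which is the assertion.

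I do not expect a serious obstacle here; this proof is essentially bookkeeping on top of the Radon/Abel computation already carried out before the statement. The only point requiring mild care is checking that the right-hand side $I_{JBf}\delta a(p)$ really does admit a smooth even extension across $p=0$ (so that the Abel inversion applies and produces a smooth $\delta f$), and that compact support is genuinely preserved; both follow from the evenness argument given in the display chain ending $R_{JBf}\delta a(-p)=R_{JBf}\delta a(p)$ together with the structure of $Bf$ for radial $f$. One should also remark, for completeness, that these kernel elements are nonzero whenever $\delta a\neq 0$, which is clear from the first component.
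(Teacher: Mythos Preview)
Your proposal is correct and follows essentially the same approach as the paper: the argument is the Abel/Radon computation carried out in the text immediately preceding the proposition, and you have simply made explicit the (trivial) injectivity of $\delta a\mapsto(\delta a,\delta f)$ to conclude infinite-dimensionality. The paper leaves the latter point implicit in the statement itself.
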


In other words, besides the inability to recover the singularities (without support restrictions), we actually have an infinite dimensional kernel. Therefore, in this case, the non-trapping condition  is a necessary condition for the problem to be well posed, as well.

\subsection{Non-uniqueness for the Identification Problem for radial $a$, $f$ near $a=0$} 
We show next that not only does the linearized map $\delta X_{a,f}$ can have an infinite dimensional kernel in the case above, but the non-linear map $(a,f)\mapsto X_af$ has a rich set of radial pairs with the same image.
 
\begin{theorem}
Let $a\in C_0^\infty$ and $f\in C_0^\infty$ be radial. Then there exists $f_0\in C_0^\infty$ so that
\be{N1}
X_af= X_0f_0.
\ee
\end{theorem}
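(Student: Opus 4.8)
The plan is to exploit the fact that for radial $a$ and $f$, the attenuated X-ray transform $X_af$ depends only on a single variable --- the signed distance $p$ of the line from the origin --- and is even in $p$, exactly as we saw in the analysis preceding Proposition~\ref{pr_rad1}. Indeed, for radial $a$ the beam transform $Ba(x,\theta)$ is rotation-invariant in the sense $Ba(Ux,U\theta)=Ba(x,\theta)$ for any rotation $U$, so $e^{-Ba}f$ is a rotation-invariant weight times a radial function, and the integral over the line $\ell_{p\omega,\theta}$ is therefore independent of $\omega$ and, by the same reflection argument as in the text (change $x_1\mapsto -x_1$, using that $a$ and $f$ are radial and $\delta$ is even), even in $p$. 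So $X_af = \f(p)$ for some function $\f$ of one variable, supported in $|p|\le \rho$ where $\rho$ is a radius containing the supports of $a,f$, and admitting a smooth even extension across $p=0$.

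Next I would note that the unattenuated transform $X_0 = I_1$ acting on radial functions is exactly the Abel-type transform $R$ displayed before Proposition~\ref{pr_rad1}, namely $Rg(p) = 2\int_p^\infty (1-p^2/r^2)^{-1/2} g(r)\,\d r$ for $p\ge 0$, and it has the explicit inversion \r{R3}: $g(r) = -\pi^{-1}\int_r^\infty (p^2-r^2)^{-1/2}\,\frac{\d}{\d p}Rg(p)\,\d p$. Moreover, as recalled in the text, $R = F_c H_0$ is (up to normalization) a composition of the cosine Fourier transform and the zero-order Hankel transform, so if $\f$ is a compactly supported function on $\R_+$ with smooth even extension, then $g := H_0 F_c \f$ is a well-defined Schwartz-class function solving $Rg = \f$. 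The only issue is compact support of $g$: this is where I would use the Paley--Wiener theorem. Since $\f$ is compactly supported in $|p|\le\rho$ and is (in the cases at hand) real-analytic where nonzero --- or more precisely, since $X_af$ for radial $a,f$ is itself in the range of the Abel transform of a compactly supported function (the proof should show that $\f = X_af$ is, by construction, of the form $R g_0$ for a compactly supported $g_0$, even if $g_0$ is not obviously radial-of-a-nice-function a priori) --- one argues that $g$ has support in $|r|\le\rho$ as well. Concretely, I would define $f_0(r)$ by the right-hand side of the inversion formula \r{R3} applied to $\f = X_af$, i.e.
\[
f_0(r) := -\frac1\pi \int_r^\infty (p^2-r^2)^{-1/2}\frac{\d}{\d p}\big(X_af\big)(p)\,\d p,
\]
and then verify $X_0 f_0 = R f_0 = X_a f$ and $f_0 \in C_0^\infty$.

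The steps in order: (1) show $X_af(p\omega,\theta)$ is independent of $\omega$ and even in $p$, with smooth even extension and compact support, reducing $X_af$ to a function $\f$ of one variable; (2) identify $X_0$ restricted to radial functions with the Abel transform $R$ and recall its inversion and the $R = F_cH_0$ factorization; (3) define $f_0$ by applying the Abel inversion to $\f$; (4) verify $f_0$ is smooth (regularity of the Abel inversion on functions with smooth even extension) and compactly supported (Paley--Wiener / finite speed, using that $\f$ is supported in a ball and in the range of the compactly-supported Abel transform); (5) conclude $X_0 f_0 = R f_0 = \f = X_a f$. The main obstacle is step (4) --- specifically, proving that the reconstructed $f_0$ is compactly supported rather than merely Schwartz-class. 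One needs that $\f = X_a f$ is genuinely in the range of $R$ acting on compactly supported profiles; a clean way is to observe that $X_a f = R(\text{something compactly supported})$ by solving the transport/Volterra structure directly, or to invoke that the Abel transform preserves the property "vanishes for $p>\rho$" in both directions (its kernel and inverse kernel are both supported on $\{r\ge p\}$), so $\f(p)=0$ for $p>\rho$ forces $f_0(r)=0$ for $r>\rho$; the smoothness then follows from the standard regularity theory for Abel-type integral equations with smooth even data. A secondary point to handle carefully is the behavior at $r=0$ and $p=0$ (the weak singularity of the Abel kernel), which is routine given the even-extension hypothesis.
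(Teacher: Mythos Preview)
Your approach is correct but takes a different route from the paper's main argument. The paper does not invert the Abel transform directly; instead it observes that since $R_af(p,\omega)$ is independent of $\omega$ and even in $p$, the moments $\int R_af(p,\omega)\,p^k\,\d p = C_k$ are constants (zero for $k$ odd), hence trivially restrictions to $S^1$ of the homogeneous polynomials $C_k|\xi|^k$. Thus $R_af$ satisfies the Helgason moment conditions, and the range characterization theorem immediately produces $f_0\in\mathcal{S}(\R^2)$ with $Rf_0=R_af$; the support theorem then forces $f_0$ to be compactly supported. The explicit Abel formula you use appears in the paper only as a constructive remark after the proof. Your argument is more elementary and self-contained (no range theorem, no support theorem), and the triangular-kernel observation for compact support is clean. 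The price is that you must verify by hand that $f_0(|x|)\in C^\infty(\R^2)$, in particular at the origin; this reduces to checking that a smooth even function of $p$ is smooth in $p^2$ (Whitney/Glaeser) and that the Abel inversion, rewritten in the squared variables, preserves $C^\infty$ regularity---both true, but worth a sentence rather than ``routine.'' The paper's appeal to Helgason sidesteps this bookkeeping by landing directly in $\mathcal{S}(\R^2)$.
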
 

\begin{proof}
We will work again with the Radon transform parameterization $R_af(p,\omega) = X_a(p\omega,-\omega^\perp)$ instead, see \r{2.1R}. As above, it is straightforward to check that
\[
R_af(p,\omega) = R_af(-p,-\omega).
\]
We saw above that  $R_0f(p,\omega)$, denoted there by $Rf$, is actually independent of $\omega$, and  an even function of $p$.  Then  for any $k=0,1,\dots$,
\[
\int R_af(p,\omega)p^k\,\d p = C_k = \text{const.},
\] 
and $C_k=0$ if $k$ is odd. 
Therefore, the integral above is a restriction of the homogeneous polynomial $C_k|\xi|^k$ to the unit sphere. Therefore, $R_af\in \mathcal{S}_H$, and by the Helgason range characterization theorem, see \cite{Helgason-Radon}, 
\r{N1} holds with some $f_0\in \mathcal{S}(\R^2)$. By the support theorem, $f_0$ is compactly supported. 
\end{proof} 

We can actually make this  constructive. By \r{R3}, writing $f_0=f_0(r)$, we get
\[
f_0(r) = -\frac1\pi \int_r^\infty (p^2-r^2)^{-1/2}\frac{\d}{\d p} X_af (p\omega,-\omega^\perp)\, \d p,
\]
recall that $X_af (p\omega,\omega^\perp)$ is independent of $\omega$. 

\appendix
\section{$I_b^*I_a$ as a \PDO}
As explained in Section~\ref{sec_prel}, we view $X_af$ and $I_wf$ as functions on $Z$, with a natural measure $\d z$ there. Then $X_a$, and more generally, $I_w$ have well defined transpose (w.r.t.\ the distribution pairing) and conjugate (w.r.t.\ the $L^2$ product) operators $X_a'$ and $X_a^*$; and $I'_w$, $I^*_w$, respectively.  Below, we use the notation $\theta_\perp$ for the line   given by $s\mapsto p\theta^\perp$.

\begin{proposition}\label{pr_A.1}
\[
I_w'\psi(x) = \int_{S^1} w(x,\theta)\psi(x-(x\cdot\theta)\theta,\theta)\, \d\theta. 
\]
\end{proposition}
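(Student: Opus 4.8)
The plan is to compute the transpose $I_w'$ directly from the defining pairing $\langle I_w f, \psi\rangle_{L^2(Z)} = \langle f, I_w'\psi\rangle_{L^2(\R^2)}$, using the parameterization of $Z$ by $(p,\theta)\in\R\times S^1$ with measure $\d z = \d p\,\d\theta$. First I would write out, for a test function $\psi$ on $Z$,
\[
\langle I_w f,\psi\rangle = \int_{S^1}\int_\R \left(\int_\R w(p\theta^\perp+t\theta,\theta)\,f(p\theta^\perp+t\theta)\,\d t\right)\psi(p\theta^\perp,\theta)\,\d p\,\d\theta,
\]
using the Radon-type form \r{2.1R} of $I_w$. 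For fixed $\theta$, the inner double integral over $(p,t)$ is an integral over the plane $\R^2$ written in the orthonormal coordinates $(p,t)\mapsto p\theta^\perp+t\theta$, whose Jacobian is $1$. So I substitute $x = p\theta^\perp + t\theta$, which gives $p = x\cdot\theta^\perp$ and $t = x\cdot\theta$; in particular the point on the line closest to the origin is $p\theta^\perp = x - (x\cdot\theta)\theta$.

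Carrying out that substitution turns the inner double integral into $\int_{\R^2} w(x,\theta) f(x)\,\psi(x-(x\cdot\theta)\theta,\theta)\,\d x$, and then Fubini (permissible since everything is compactly supported in $x$ and $w,\psi$ are smooth) lets me swap the $x$ integral outside the $\theta$ integral:
\[
\langle I_w f,\psi\rangle = \int_{\R^2} f(x)\left(\int_{S^1} w(x,\theta)\,\psi(x-(x\cdot\theta)\theta,\theta)\,\d\theta\right)\d x.
\]
Reading off the inner factor as $I_w'\psi(x)$ gives exactly the claimed formula. I should also note that $\psi$ is evaluated at $(x-(x\cdot\theta)\theta,\theta)$, which indeed lies in $Z$ since $x-(x\cdot\theta)\theta \in \theta^\perp$, so the expression is well defined; and for the distributional pairing one extends from smooth $\psi$ by density / duality in the usual way.

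I do not expect a genuine obstacle here — this is a bookkeeping computation. The only point requiring a little care is matching the parameterization conventions: the sign in $v^\perp$, the identification $z = p\theta^\perp$, and the fact (used implicitly) that $\{p\theta^\perp + t\theta : p,t\in\R\}$ is an area-preserving reparameterization of $\R^2$ because $\{\theta,\theta^\perp\}$ is orthonormal. One should double-check that the closest-point-to-origin projection is $x\mapsto x-(x\cdot\theta)\theta$ and that this is what appears as the first argument of $\psi$; everything else is Fubini.
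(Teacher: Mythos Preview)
Your proposal is correct and follows essentially the same approach as the paper: both compute the pairing $\langle I_w f,\psi\rangle$ and perform the orthonormal change of variables $x = p\theta^\perp + t\theta$ (the paper writes it as $x=z+s\theta$ with $z\in\theta^\perp$), noting the Jacobian is $1$ and reading off the inverse projection $z = x-(x\cdot\theta)\theta$. Your added remarks about Fubini and about the argument of $\psi$ lying in $Z$ are fine sanity checks but do not change the argument.
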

\begin{proof}
For $\phi\in C_0^\infty(\R^2)$, $\psi\in C_0^\infty(Z)$, we have
\[
\int_Z (I_w\phi)\psi\, \d z = \int_Z\int_{\R}w(z+s\theta,\theta) \phi(z+s\theta) \psi(z,\theta)\, \d s\,\d z\,\d\theta.
\]
Set $x=z+s\theta$, $z\in\theta_\perp$. For any fixed $\theta$, $(z,s)\mapsto x$ is a diffeomorphism  with a Jacobian equal to $1$. Its  inverse is given by
\[
z=x-(x\cdot\theta)\theta, \quad s=x\cdot\theta. 
\]
Therefore,
\[
\int_Z (I_w\phi)\psi\,\d z= \int_Z\int_{\R^2}w(x,\theta) \phi(x)\psi(x-(x\cdot\theta)\theta,\theta)\,\d x\, \d\theta,
\]
and this proves the proposition. 
\end{proof}

\begin{proposition}\label{pr_symbol} For any two smooth functions $a$ and $b$, 
\[
I_b'I_a f (x)  = \int   \frac{ A\big(x,y,   \frac{x-y}{|x-y|}\big) }{|x-y|}  f(y) \,\d y, 
\]
where
\be{A.A}
A(x,y,\theta) = a(x,\theta) b(y,\theta) + a(x,-\theta) b(y,-\theta).        
\ee
Moreover, $I_b'I_a$ is a classical \PDO\ of order $-1$ with amplitude
\be{A.1amp}
\frac{\pi}{|\xi|}\left( a(x,\xi^\perp/|\xi|)b(y,\xi^\perp/|\xi|) + a(x,-\xi^\perp/|\xi|)\ b(y,-\xi^\perp/|\xi|)\right),
\ee
and principal symbol
\[
\frac{\pi}{|\xi|}\left( a(x,\xi^\perp/|\xi|) b(x,\xi^\perp/|\xi|) + a(x,-\xi^\perp/|\xi|)  b(x,-\xi^\perp/|\xi|)\right).
\]
\end{proposition}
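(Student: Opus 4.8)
The plan is to verify the integral-kernel formula for $I_b'I_a$ by direct composition, then extract the \PDO\ structure from the singularity of that kernel on the diagonal. First I would apply Proposition~\ref{pr_A.1} to write $I_b'\psi(x) = \int_{S^1} b(x,\theta)\psi(x-(x\cdot\theta)\theta,\theta)\,\d\theta$ and combine it with the defining formula \r{2.1} for $I_a$, namely $I_af(z,\theta)=\int a(z+t\theta,\theta)f(z+t\theta)\,\d t$. Setting $z=x-(x\cdot\theta)\theta$ (so $z\in\theta^\perp$) and writing the inner integration variable as $y=z+t\theta$, one gets
\[
I_b'I_af(x) = \int_{S^1}\int_{\R} b(x,\theta)\,a(x-(x\cdot\theta)\theta+t\theta,\theta)\,f(x-(x\cdot\theta)\theta+t\theta)\,\d t\,\d\theta.
\]
The key observation is that the point $x-(x\cdot\theta)\theta+t\theta$ ranges over the line through $x$ in direction $\theta$, so this is an integral of $f$ over all lines through $x$. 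Introducing $y$ as the running point on such a line, $y=x+r\theta$ with $r=t-x\cdot\theta\in\R$, the map $(\theta,r)\mapsto y$ is a parameterization of $\R^2\setminus\{x\}$ that double-covers (since $\theta$ and $-\theta$ with $r\mapsto -r$ give the same line); switching to polar coordinates centered at $x$, $\d y = |r|\,\d r\,\d\theta$, i.e.\ $\d r\,\d\theta = |x-y|^{-1}\,\d y$ after accounting for the two-to-one nature and folding $\pm\theta$ together. This yields exactly
\[
I_b'I_af(x) = \int \frac{A\big(x,y,\tfrac{x-y}{|x-y|}\big)}{|x-y|}\,f(y)\,\d y,\qquad A(x,y,\theta)=a(x,\theta)b(y,\theta)+a(x,-\theta)b(y,-\theta),
\]
the $\pm$ sum in $A$ being precisely the bookkeeping for the double cover. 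Here I am using $a(x,\theta)$ in place of the weight evaluated at $x$ but I must be careful: in the formula above $b$ is evaluated at $x$ and $a$ along the line; re-tracing which factor lands where, and noting $A$ is even in $\theta$, shows the symmetric form stated (one may also just rename to match \r{A.A}).

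Next I would identify the operator with kernel $|x-y|^{-1}A(x,y,(x-y)/|x-y|)$ as a classical \PDO\ of order $-1$. The standard tool is that a kernel of the form $|x-y|^{-1}h(x,y,(x-y)/|x-y|)$ with $h$ smooth and homogeneous of degree $0$ in its last slot is a pseudodifferential operator whose amplitude is obtained by taking the Fourier transform in $x-y$ of the (conic) distribution $|v|^{-1}h(x,x-v,v/|v|)$. Concretely, using the classical computation that the Fourier transform of the locally integrable function $|v|^{-1}$ on $\R^2$ (a homogeneous distribution of degree $-1$) together with the angular factor produces a symbol homogeneous of degree $-1$: writing $v=x-y$, one computes $\int e^{-iv\cdot\xi}|v|^{-1}g(v/|v|)\,\d v$ in polar coordinates $v=\rho\omega$ as $\int_{S^1}g(\omega)\int_0^\infty e^{-i\rho\,\omega\cdot\xi}\,\d\rho\,\d\omega$, and the $\rho$-integral gives $(\i\,\omega\cdot\xi + 0)^{-1}$, whose even-in-$\omega$ part integrates against $g$ to give $\pi/|\xi|$ times the value of $g$ at $\omega=\pm\xi^\perp/|\xi|$ (the directions with $\omega\cdot\xi=0$), since the principal-value part is odd and cancels against the even weight $A$. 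Applying this with $g(\omega)=A(x,y,\omega)$, frozen appropriately in $x,y$, produces the amplitude \r{A.1amp}:
\[
\frac{\pi}{|\xi|}\Big(a(x,\xi^\perp/|\xi|)b(y,\xi^\perp/|\xi|)+a(x,-\xi^\perp/|\xi|)b(y,-\xi^\perp/|\xi|)\Big).
\]
Setting $y=x$ in the amplitude gives the principal symbol, and one checks the amplitude is a classical symbol of order $-1$ (smooth, homogeneous degree $-1$ away from $\xi=0$), so $I_b'I_a\in\Psi^{-1}$ classical.

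The main obstacle, and the step deserving the most care, is the rigorous passage from the spatial kernel to the amplitude — i.e.\ justifying that the angular integral of the oscillatory one-dimensional integral $\int_0^\infty e^{-i\rho\,\omega\cdot\xi}\,\d\rho$ localizes (as a distribution in $\xi$) to the codirections $\omega\perp\xi$ with the clean constant $\pi/|\xi|$, and that the principal-value contribution genuinely cancels because $A$ is even in $\theta$. This requires treating $|v|^{-1}$ and the associated conic distribution carefully as tempered distributions and controlling the behavior near $\rho=0$ and $\rho=\infty$ (the latter being where the oscillation does its work); the evenness of $A$ in $\theta$ is essential and is exactly why the $\pm$ pair appears in \r{A.A}. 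Everything else — the change of variables, the polar-coordinate Jacobian, the homogeneity bookkeeping — is routine, and I would present it compactly, citing the standard \PDO\ calculus (e.g.\ \cite{Hormander3}, or the treatment of such X-ray-transform kernels in \cite{SU-JAMS}) for the kernel-to-symbol step rather than reproving it.
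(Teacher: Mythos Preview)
Your proposal is correct and follows essentially the same route as the paper: compose $I_b'$ with $I_a$ via Proposition~\ref{pr_A.1}, reparameterize the line integral as $y=x+s\theta$, split/fold the $\pm\theta$ contributions and pass to polar coordinates to obtain the kernel, then Fourier transform in $x-y$ using the distributional identity $\int_0^\infty e^{\i r\theta\cdot\xi}\,\d r=\pi\delta(\theta\cdot\xi)+\text{(odd part)}$, with the odd principal-value piece killed by the evenness of $A$ in $\theta$. Your caution about which of $a,b$ sits at $x$ versus $y$ is well placed (the paper's own labeling is slightly inconsistent there), but since $A$ is even in $\theta$ and the roles are symmetric up to renaming, this does not affect the argument.
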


\begin{proof}
By Proposition~\ref{pr_A.1},
\[
\begin{split}
I'_aI_bf(x)&= \int_{S^1} b(x,\theta)\int a( x-(x\cdot\theta)\theta+t\theta,\theta ) f(x-(x\cdot\theta)\theta+t\theta)\,\d t\,\d\theta \\
 &= \int_{S^1} b(x,\theta)\int a(x+t\theta,\theta) f(x +t\theta)\,\d t\,\d\theta.
\end{split}
\]
Split the $t$-integral in two parts: for $t>0$ and for $t<0$, and replace $t$ by $-t$ in the second one to get
\be{A.2}
\begin{split}
I'_aI_bf(x)  &= \int_{S^1} b(x,\theta)\int a(x+t\theta,\theta) f(x +t\theta)\,\d t\,\d\theta\\
  & = \int_{S^1} b(x,\theta)\int_0^\infty a(x+t\theta,\theta) f(x +t\theta)\,\d t\,\d\theta \\
& \quad + \int_{S^1} b(x,\theta)\int_0^\infty a(x-t\theta,\theta) f(x -t\theta)\,\d t\,\d\theta.
\end{split}
\ee
Replace $-\theta$ by $\theta$ in the second integral to get
\be{A.3}
I'_aI_bf(x) = \int_{S^1}\int_0^\infty  \left[ b(x,\theta)a(x+t\theta,\theta)+ b(x,-\theta)a(x+t\theta,-\theta) \right]  f(x +t\theta)\,\d t\,\d\theta.
\ee
Pass to polar coordinates $x=y+t\theta$, centered at $y$ to finish the proof.

To write $I^*_aI_b$ as a \PDO, recall that if the Schwartz kernel of a linear operator is given by $K(x,y,(x-y)/|x-y|)$, then it is a formal \PDO\ with an amplitude given by the Fourier transform of $K$ w.r.t.\ the third variable. Therefore, $I^*_aI_b$ is a formal \PDO\ with amplitude
\[
\begin{split}
\int e^{\i z\cdot\xi} |z|^{-1}A(x,y,z/|z|)\,\d z &= \int_{\R_+\times S^1} e^{\i r\theta\cdot\xi}  A(x,y,\theta)\,\d r\,\d \theta = \pi \int_{S^1}A(x,y,\theta) \delta(\theta\cdot\xi)\,\d\theta\\
&=\frac{\pi}{|\xi|}\left(A(x,y,\xi^\perp/|\xi|)+ A(x,y,-\xi^\perp/|\xi|)\right).
\end{split}
\]
We used here the fact that $A$ is an even function of $\theta$ and that the inverse Fourier transform of $1$ is $\delta$, see also \cite[Theorem~7.1.24]{Hormander1}. 
Since this is a homogeneous function of $\xi$, with an integrable singularity that can be cut-off resulting in a smoothing operator, this completes the proof.
\end{proof}

The mapping properties of those operators are well understood even in the more general setting of the weighted geodesic transform.  
We summarize those properties below. Recall the definition of the Sobolev space $H^s(Z)$ in \r{A.4} first. Given a compact set $K\subset \R^2$, we also use the notation $H^s(K)$ to denote the closed subspace of the distributions in  $H^s(\R^2)$ supported in $K$, see \cite[Chapter~4.5]{Taylor-book2}, where those spaces are denoted by $H_K^s(M)$.

\begin{proposition}\label{pr_I}
For any compact set $K\subset\R^2$, and any $s\ge0$,
\be{A.est1}
I_w : H^{s-1/2}(K) \mapsto H_{\rm comp}^{s}(Z), \quad I'_w :  H_{\rm comp}^{s-1/2}(Z)\mapsto  H^{s}_{\rm loc}(\R^2)  
\ee
are continuous.
\end{proposition}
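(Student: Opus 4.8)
The plan is to establish the two mapping statements in \r{A.est1} by a duality argument, reducing everything to the single claim that $I_w^*I_w$ (equivalently $I_w'I_w$, up to complex conjugation of the weight) is a \PDO\ of order $-1$, which is already available from Proposition~\ref{pr_symbol}. First I would record the elementary observation that $I'_w$ and $I^*_w$ differ only by conjugating $w$, and similarly for $I_w$; since all Sobolev estimates are insensitive to that, it suffices to prove the statements for, say, $I'_w$ and $I_w$ with the understanding that the weight may be replaced by its conjugate throughout. So the real content is: (i) $I_w\colon H^{s-1/2}(K)\to H^s_{\rm comp}(Z)$ is bounded for $s\ge 0$, and (ii) $I'_w\colon H^{s-1/2}_{\rm comp}(Z)\to H^s_{\rm loc}(\R^2)$ is bounded; and (i) and (ii) are adjoint to each other with respect to the $L^2(Z)$ and $L^2(\R^2)$ pairings, so proving one gives the other.

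Next I would prove (ii) directly. Pick a cutoff $\chi\in C_0^\infty(\R^2)$ and a bounded open $\Omega$; it suffices to bound $\|\chi I'_w \psi\|_{H^s(\R^2)}$ by $\|\psi\|_{H^{s-1/2}_{\rm comp}(Z)}$ for $\psi$ supported in a fixed compact subset of $Z$. Write $\chi I'_w \psi = \chi I'_w I_{\tilde w} \phi$ — no, better: the clean route is to use $I'_w I_w$. Consider $\|\chi I'_w \psi\|_{H^s}^2 = (\Lambda^{2s}\chi I'_w\psi,\chi I'_w\psi)$ where $\Lambda = (1-\Delta)^{1/2}$; this is controlled by $(I_w (I'_w)^{*}\Lambda^{2s}\chi^2 I'_w\psi,\psi)_{L^2(Z)}$, and the operator $I_w \Lambda^{2s}\chi^2 I'_w$ sandwiched appropriately is, by Proposition~\ref{pr_symbol} and the \PDO\ calculus, a \PDO\ on $Z$ of order $2s-1$ — more precisely one uses that $I_w' I_{w}$ (or $I_w I_w'$ read on $Z$) is order $-1$, and composing with $\Lambda^{2s}$ raises the order by $2s$. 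Hence $I_w\Lambda^{2s}\chi^2 I_w'$ maps $H^{s-1/2}(Z)\to H^{-(s-1/2)}(Z)$ continuously, and pairing against $\psi\in H^{s-1/2}(Z)$ gives the desired bound $\|\chi I'_w\psi\|_{H^s}\le C\|\psi\|_{H^{s-1/2}(Z)}$. The gain of $1/2$ a derivative is exactly the half-order smoothing of the X-ray transform encoded in the $1/|\xi|$ in \r{A.1amp}, reflected here by the order $-1$ of $I_w'I_w$ being split symmetrically between the two factors.

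Then (i) follows by duality: $I_w$ is the adjoint of $I_w'$ (with conjugated weight) relative to the pairings $L^2(Z)$ and $L^2(\R^2)$, and by Proposition~\ref{pr_A.1} the support of $I_w\phi$ stays in a fixed compact subset of $Z$ when $\supp\phi\subset K$, so the $H^s_{\rm comp}(Z)$ target is legitimate; the adjoint of a bounded map $H^{s-1/2}_{\rm comp}(Z)\to H^s_{\rm loc}(\R^2)$ is a bounded map $H^{-s}(K)\to H^{-(s-1/2)}(Z)$, and renaming $-s \rightsquigarrow s-1/2$ (using $s\ge 0$, i.e. allowing the full scale) yields $I_w\colon H^{s-1/2}(K)\to H^s_{\rm comp}(Z)$. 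One has to be a little careful matching the compact/local decorations and the sign conventions on the Sobolev indices, but since $K$ is compact and all operators here are properly supported after inserting cutoffs, duality between $H^s(K)$ and $H^{-s}_{\rm loc}$ goes through cleanly.

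The main obstacle is purely bookkeeping rather than conceptual: one must make precise the \PDO\ calculus on the cylinder $Z = S^1_p\times S^1_\theta$ with the $\theta$-independent Sobolev norm \r{A.4}, so that composing $\Lambda^{2s}$ (which only differentiates in $p$) with $I_w'I_w$ stays within the calculus and the order count is honest — the subtlety being that the Schwartz kernel of $I_w'I_w$ from Proposition~\ref{pr_symbol} lives on $\R^2\times\R^2$, so one really checks boundedness on $\R^2$ first (where it is a genuine \PDO\ of order $-1$) and only then transfers to $Z$ via $I_w, I_w'$ and the relation between the $p$-Fourier-series norm on $Z$ and the standard norm on $\R^2$, exactly as in \cite[Theorem~II.5.1]{Natterer-book} and the proof of Proposition~\ref{pr_eq} above. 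Once that transfer is set up, nothing deeper is needed; this is why the statement is relegated to the appendix and its proof can be given by citing the standard theory, e.g. \cite[Chapter~4.5]{Taylor-book2}, rather than reworked in detail.
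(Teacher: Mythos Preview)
Your plan has the right flavor but there are two genuine gaps, and they are not just bookkeeping.

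First, your direct argument for (ii) hinges on the claim that $I_{\bar w}\,\chi\Lambda^{2s}\chi\, I'_w$ is a \PDO\ of order $2s-1$ on $Z$ (equivalently, that $I_aI'_b$ is a \PDO\ of order $-1$ on $Z$). But Proposition~\ref{pr_symbol} only computes $I'_bI_a$ as a \PDO\ on $\R^2$; the ``mirror'' statement on $Z$ is neither stated nor proved anywhere in the paper, and it does not follow from the \PDO\ calculus on $\R^2$ alone. It is true --- one can get it from the clean FIO composition calculus, or by writing down the Schwartz kernel on $Z\times Z$ and checking directly --- but that is exactly the work, not an afterthought. Your final paragraph essentially concedes this and then waves it away; in fact it is the whole content of the estimate.

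Second, your duality step does not cover the stated range. If (ii) holds for $s\ge 0$, the adjoint gives $I_w:H^{-s}(K)\to H^{-s+1/2}(Z)$, i.e.\ (i) only for the new index $\le 1/2$. Your ``renaming $-s\rightsquigarrow s-1/2$ (using $s\ge 0$)'' yields $s_{\rm new}\le 1/2$, not $s_{\rm new}\ge 0$. To get (i) for all $s\ge 0$ by duality you would need (ii) for all real $s$, which sends you back to the first gap.

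The paper avoids both issues by reversing the order: it proves (i) directly for integer $s$ via the commutation $\partial_p^{\,j}I_w=\sum_{k\le j} I_{w_k}\partial_x^{k}$, so that $I_w^*\partial_p^{\,j}I_w$ becomes a finite sum of operators $I_{w}^{*}I_{w_k}\partial_x^{k}$, each of which \emph{is} covered by Proposition~\ref{pr_symbol} as a \PDO\ on $\R^2$; interpolation then gives all $s\ge 0$. Estimate (ii) is then obtained from (i) (specifically from the case $I_w:L^2\to H^{1/2}(Z)$) together with the analogous commutation $\partial_x^\alpha I'_w=\sum_{k\le|\alpha|} I'_{\tilde w_k}\partial_p^{k}$. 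The point is that this route never needs the normal operator on the $Z$ side.
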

\begin{proof}
We follow the proof of Proposition~5.1 in \cite{SU-caustics}. We can always assume that $w$ is extended smoothly for $x$ outside $K$ so that it vanishes outside a small neighborhood of $K$.  Then we can replace $\R^2$ and $Z$ by compact manifolds, as explained in Section~\ref{sec_prel}, and work with $f\in C^\infty(\mathbf{T}^2)$. 

Note first that $I_a'I_b: H^s\to H^{s+1}$. Next, if $s\ge0$ is an integer,  for $f$ supported in $K$,
\be{A.5}
\|I_wf\|_{H^{s}(Z)}^2 
\le C\sum_{j\le 2s}\Big| \left(    \partial_p^j   I_wf,I_wf  \right)_{L^2(Z)}\Big| 
=   C\sum_{j\le 2s}\Big| \left( I_w^*   \partial_p^j   I_wf,f  \right)_{L^2(K)}\Big|.
\ee
The term $ I_w^*\partial_p^j I_wf$ is a sum of weighted X-ray transforms of derivatives of $f$ up to order $2s+1$, and is therefore a \PDO\ of order $2s$. This easily implies that for $f\in C^\infty(\mathbf{T}^2)$, 
\[
\|I_wf\|_{H^{s}(Z)}^2 \le C\|f||^2_{H^{s-1/2}(K)}.
\]
The case of general $s\ge0$ follows by interpolation. The estimate then holds for any $f\in H^{s-1/2}(\mathbf{T}^2)$, and therefore, for any $f\in H^{s-1/2}(K)$, as well. 

To prove the second estimate, notice first that $\partial^\alpha I_w^*\psi$ is a sum of operators of the kind $I^*_w$ but with different weights applied to $p$-derivatives of $\psi$ up to order $|\alpha|$. 
Then for any integer $j\ge0$,
\[
|(f,I_a^*\partial_p^j\psi)_{L^2}| =  |(I_a f,\partial_p^j\psi)_{L^2(Z)}|\le C\|f\|_{L^2} \|\psi\|_{H^{j-1/2}}.
\]
This proves the second estimate for $s=0,1,\dots$.  For general $s\ge0$   we use interpolation.
\end{proof} 


\end{document}